\newcommand{\GI}[2][]{\sidenote[colback=yellow!20]{\textbf{GI\xspace #1:} #2}}
\newcommand{\SJ}[2][]{\sidenote[colback=green!10]{\textbf{SJ\xspace #1:} #2}}
\newcommand{\BC}[2][]{\sidenote[colback=orange!20]{\textbf{BC\xspace #1}: #2}}
\newcommand{\tmix}{t_\mathrm{mix}}
\DeclareMathOperator{\dist}{dist}
\DeclareMathOperator{\Leb}{Leb}
\DeclareMathOperator{\rank}{rank}
\DeclarePairedDelimiter{\ip}{\langle}{\rangle}
\newtheorem{question}[theorem]{Question}
\newcommand{\vertiii}[1]{{\left\vert\kern-0.25ex\left\vert\kern-0.25ex\left\vert #1
    \right\vert\kern-0.25ex\right\vert\kern-0.25ex\right\vert}}
\renewcommand{\setminus}{-}
\newtheorem{assumption}[theorem]{Assumption}
\begin{document}
% Title etc.
\title[A Harris Theorem for Enhanced Dissipation]{A Harris Theorem for Enhanced Dissipation, and an Example of Pierrehumbert}

\author[Cooperman]{William Cooperman}
\address{%
  Courant Institute of Mathematical Sciences,
  New York University,  NY 10003
}
\email{bill@cprmn.org}

\author[Iyer]{Gautam Iyer}
\address{%
  Department of Mathematical Sciences, Carnegie Mellon University, Pittsburgh, PA 15213.
}
\email{gautam@math.cmu.edu}

\author[Son]{Seungjae Son}
\address{%
  Department of Mathematical Sciences, Carnegie Mellon University, Pittsburgh, PA 15213.
}
\email{seungjas@andrew.cmu.edu}
\begin{abstract}
  In many situations, the combined effect of advection and diffusion greatly increases the rate of convergence to equilibrium -- a phenomenon known as \emph{enhanced dissipation}.
  Here we study the situation where the advecting velocity field generates a random dynamical system satisfying certain \emph{Harris conditions}.
  If~$\kappa$ denotes the strength of the diffusion, then we show that with probability at least $1 - o(\kappa^N)$ enhanced dissipation occurs on time scales of order~$\abs{\ln \kappa}$, a bound which is known to be optimal.
  Moreover, on long time scales, we show that the rate of convergence to equilibrium is almost surely~\emph{independent} of diffusivity.
  As a consequence we obtain enhanced dissipation for the randomly shifted alternating shears introduced by Pierrehumbert '94.
\end{abstract}

\thanks{This work has been partially supported by the National Science Foundation under grants
  2108080, %GI
  2342349, %RTG
  2303355, %Bill
  2406853 %GI
  and the Center for Nonlinear Analysis.}
\subjclass{%
  Primary:
    37A25. %  Ergodicity, mixing, rates of mixing
  Secondary:
    60J05, %	Discrete-time Markov processes on general state spaces
    76R99. %	Diffusion and Convection
  }
\keywords{enhanced dissipation, mixing}

\maketitle

\section{Introduction}\label{s:intro}

\subsection{Main Results}

We begin by stating our results.
Following this, we will survey the literature and place our work in the context of existing results.
Let~$u$ be a (possibly time dependent) divergence free vector on the torus, $\kappa > 0$, and~$\rho$ solve the advection diffusion equation
\begin{equation}\label{e:ad}
  \partial_t \rho + u \cdot \grad \rho - \kappa \lap \rho = 0 \,,
\end{equation}
on the $d$-dimensional torus~$\T^d$.
Multiplying~\eqref{e:ad} by~$\rho$, using the fact that~$\dv u = 0$, integrating and using the Poincar\'e inequality implies
\begin{equation}\label{e:EE-Poincare}
  \norm{\rho(\cdot, t) - \bar \rho}_{L^2} \leq e^{-\lambda_1 \kappa t} \norm{\rho_0 - \bar \rho }_{L^2} \,,
\end{equation}
where
\begin{equation}
  \bar \rho
    = \int_{\T^d} \rho_0(x) \, dx
    %= \int_{\T^d} \rho(x, s) \, dx
  \,,
\end{equation}
is the (constant) equilibrium solution to~\eqref{e:ad} and $\lambda_1 > 0$ is the smallest non-zero eigenvalue of the negative Laplacian.
\emph{Enhanced dissipation} is the phenomenon where solutions to~\eqref{e:ad} converge to equilibrium faster than the upper bound~\eqref{e:EE-Poincare}.
Our main result shows that if the flow of~$u$ generates a random dynamical system (RDS) that satisfies the Harris conditions (stated below), then enhanced dissipation occurs.
The enhanced dissipation rate is optimal at short times with large probability, and is almost surely \emph{independent} of the diffusivity for long times.

\begin{theorem}\label{t:EDIntro}
  Suppose the flow of~$u$ generates a random dynamical system that satisfies the Harris conditions stated in Assumptions~\ref{a:fts}--\ref{a:flow}, below.
  For any~$\alpha > 0$, $q < \infty$ there exists $\gamma > 0$ and a $\kappa$-dependent random variable $D_\kappa$ such that for every initial data~$\rho_0 \in L^1(\T^d)$, the solution to~\eqref{e:ad} satisfies
  \begin{equation}\label{e:rho-L1-Linf}
    \norm{\rho(\cdot, t) - \bar \rho}_{L^\infty} \leq \frac{D_\kappa}{\kappa^{\frac{d}{2} + \alpha}}
      e^{- \gamma t}
      \norm{\rho_0 - \bar \rho }_{L^1} \,.
  \end{equation}
  Moreover, there exist a $\kappa$-independent, deterministic, constant $\bar D_q$ such that
  \begin{equation}\label{e:D-kappa-bound}
    \E D_\kappa^q \leq \bar D_q\,.
  \end{equation}
\end{theorem}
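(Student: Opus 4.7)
\textit{Proof sketch.}
My plan is to decompose the evolution into three stages: a short parabolic smoothing from $L^1$ to $L^2$, a long enhanced dissipation stage in $L^2$ driven by the Harris conditions, and a final smoothing from $L^2$ to $L^\infty$. The random variable $D_\kappa$ will arise only from the middle stage; the two flanking stages contribute the deterministic factor $\kappa^{-d/2-\alpha}$.

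For the smoothing stages I would invoke the standard ultracontractivity of the divergence-free advection-diffusion semigroup on $\T^d$: for any $1 \leq p \leq q \leq \infty$ and any $t_0 > 0$,
\begin{equation*}
  \norm{\rho(t_0) - \bar \rho}_{L^q}
    \leq C (\kappa t_0)^{-\frac{d}{2}(\frac{1}{p} - \frac{1}{q})}
      \norm{\rho_0 - \bar \rho}_{L^p},
\end{equation*}
with $C$ deterministic because $\dv u = 0$ renders the Aronson/Nash heat-kernel bounds insensitive to the drift. Applying this with $(p,q) = (1, 2+\epsilon)$ at the start of the evolution and with $(p,q) = (2-\epsilon, \infty)$ at the end, then choosing $\epsilon = \epsilon(\alpha)$ small, yields the prefactor $\kappa^{-d/2 - \alpha}$.

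The heart of the proof is the middle $L^2$ decay. Following the Hairer--Mattingly probabilistic treatment of Harris's theorem, I would use Assumptions~\ref{a:fts}--\ref{a:flow} to build a discrete-time chain of returns to a ``good set'' on which a Doeblin-type minorization holds. This should produce a random coupling time $\tau_\omega$ and a deterministic rate $\gamma > 0$ with
\begin{equation*}
  \norm{\rho(t) - \bar \rho}_{L^2}
    \leq e^{-\gamma (t - \tau_\omega)} \norm{\rho(t_0) - \bar \rho}_{L^2}
  \quad \text{for all } t \geq \tau_\omega,
\end{equation*}
where $\tau_\omega$ has super-polynomial tails inherited from the Lyapunov/drift condition. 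Setting $D_\kappa$ to absorb $e^{\gamma \tau_\omega}$ together with the deterministic smoothing constants delivers \eqref{e:rho-L1-Linf}, while the tail bounds on $\tau_\omega$ deliver \eqref{e:D-kappa-bound}.

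The main obstacle is that \eqref{e:D-kappa-bound} must hold \emph{uniformly in $\kappa$}. Many coupling approaches to enhanced dissipation construct the minorization out of the heat kernel, so the tails of $\tau_\omega$ degrade as $\kappa \to 0$. The whole point of formulating the Harris conditions on the random flow itself, rather than on the diffused process, is to make the minorization robust in this limit. Carrying this through -- and in particular verifying that $\E \tau_\omega^q$ stays bounded as $\kappa \to 0$ -- is the substantive probabilistic work. I would try to organize the argument so that $\kappa$ enters only through the two deterministic smoothing estimates, while all stochastic bookkeeping is done on the $\kappa$-free random dynamical system, with diffusion used only to spread mass onto the minorizing set after a deterministic number of steps.
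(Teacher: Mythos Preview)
Your three-stage outline (parabolic smoothing, then decay, then smoothing again) is the skeleton the paper uses, and you correctly locate the only real content in the middle step and its $\kappa$-uniformity. The paper runs the middle step in $H^\alpha \to H^{-\alpha}$ rather than $L^2 \to L^2$, so each flanking smoothing step contributes $\kappa^{-(2\alpha+d)/4}$, but that is cosmetic.

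The gap is in your mechanism for the middle step. The Harris conditions in Assumptions~\ref{a:fts}--\ref{a:flow} are imposed on the \emph{two-point chain} $P^{(2)}_0$ on $\T^{d,(2)}$, and the Hairer--Mattingly theorem yields \emph{annealed} $V$-geometric ergodicity of $P^{(2)}_\kappa$---a bound averaged over all realizations of both $u$ and $W$. This is not a quenched (pathwise in $\omega$) $L^2$ decay of $\rho$, and the passage from one to the other is not a coupling-time argument. Your $\tau_\omega$ has no clear meaning here: coupling two copies of the one-point chain only gives ergodicity of $X^\kappa$ on $\T^d$, which says nothing about $\norm{\rho_t-\bar\rho}$; and the Harris coupling for the two-point chain is annealed, so it does not produce a random time depending on $\omega$ alone. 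What the paper actually does is (i) verify the Harris hypotheses for $P^{(2)}_\kappa$ with constants uniform in small $\kappa$ (Lemma~\ref{l:harrisassumptions}---this is the real work, and it is done for positive $\kappa$, not on the $\kappa$-free system as you suggest); (ii) deduce $\kappa$-uniform $V$-geometric ergodicity of the two-point chain (Lemma~\ref{l: uge}); and (iii) convert this annealed two-point bound into \emph{almost-sure exponential mixing} of $X^\kappa$ via a Borel--Cantelli argument over Fourier modes, following Dolgopyat--Kaloshin--Koralov (Lemma~\ref{l:X-exp-mix}). Step~(iii) is what manufactures the random constant $D_\kappa(\omega)$, and the moment bound~\eqref{e:D-kappa-bound} comes from summing tail probabilities over frequencies, not from moments of a coupling time. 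Your sketch is missing a replacement for this annealed-to-quenched conversion.
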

\begin{remark}\label{r:chebychev}
  The bound~\eqref{e:D-kappa-bound} implies that for any~$\beta > 0$, $\P( D_\kappa \geq \kappa^{-\beta} ) \leq \kappa^{\beta q} \bar D_q$.
  Using this in~\eqref{e:rho-L1-Linf} will show that with probability at least $1 - \bar D_q \kappa^{\beta q}$, we have
  \begin{equation}\label{e:ED-L1-Linf}
    \norm{\rho(\cdot, t) - \bar \rho}_{L^\infty} \leq \frac{1}{\kappa^{\frac{d}{2} + \alpha + \beta }}
      e^{- \gamma t}
      \norm{\rho_0 - \bar \rho }_{L^1}
    \,,
  \end{equation}
  for all $t \geq 0$, and all~$\rho_0 \in L^1$.
\end{remark}

An equivalent probabilistic formulation of this result is as follows.
Consider the Markov process defined by the SDE
\begin{equation}\label{e:SDEX}
  dX^{\kappa}_{t}(x) = - u(X^{\kappa}_{t}(x), t) \, dt + \sqrt{2\kappa} \, dW_t\,,
  \quad
  X^{\kappa}_{0}(x) = x \,,
\end{equation}
on the torus~$\T^d$.
Since $\dv u = 0$, the (unique) stationary distribution of~$X^\kappa$ is the Lebesgue measure.
Let $p^\kappa_t(x, y)$  denote the transition density of~$X^\kappa_t$, and recall the~\emph{uniform mixing time}~\cite{LevinPeres17,MontenegroTetali06} is defined by
\begin{equation}
  \tmix^\infty( X^\kappa, \epsilon )
    = \inf\set[\Big]{ t \geq 0 \st \sup_{x \in \T^d} \norm{p^\kappa_t(x, \cdot) - 1}_{L^\infty} < \epsilon }
  \,.
\end{equation}
It is easy to see that the~\emph{uniform mixing time} of~$X^\kappa$  satisfies
\begin{equation}
  \tmix^\infty(X^\kappa, \epsilon)
    \leq \frac{C \abs{\ln \epsilon}}{\kappa}
    \,,
\end{equation}
for some constant $C > 0$.
Theorem~\ref{t:EDIntro} is equivalent to the following uniform mixing time estimate.

\begin{theorem}\label{t:umix}
  Suppose the flow of~$u$ generates a random dynamical system satisfying the Harris conditions (Assumptions~\ref{a:fts}--\ref{a:flow}, below).
  For any~$\alpha > 0$, $q < \infty$ there exists $\gamma > 0$ and a $\kappa$-dependent random variable $D_\kappa$ such that
  \begin{equation}\label{e:tmix-bound}
    \tmix^\infty( X^\kappa, \epsilon )
      \leq \frac{1}{\gamma}
	\ln \paren[\Big]{ \frac{D_\kappa }{\epsilon \kappa^{\frac{d}{2} + \alpha}} }
  \end{equation}
  almost surely.
  Moreover, there exists a $\kappa$-independent (deterministic) constant $\bar D_q$ such that~\eqref{e:D-kappa-bound} holds.
\end{theorem}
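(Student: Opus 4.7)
The plan is to deduce Theorem~\ref{t:umix} directly from Theorem~\ref{t:EDIntro} by interpreting the $L^1\to L^\infty$ smoothing estimate~\eqref{e:rho-L1-Linf} as a pointwise bound on the fundamental solution of~\eqref{e:ad}, which is exactly the transition density $p^\kappa_t(x,y)$ (up to a time-reversal of the drift when $u$ is time-inhomogeneous).

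First I would verify the kernel representation
\begin{equation*}
  \rho(x, t) = \int_{\T^d} p^\kappa_t(x, y)\,\rho_0(y)\,dy.
\end{equation*}
Since $\dv u = 0$, the generator of~\eqref{e:SDEX} is $L_t = -u(\cdot,t)\cdot\grad + \kappa\lap$, matching the operator in~\eqref{e:ad}, and It\^o's formula provides the identification. In the time-inhomogeneous setting one must first reverse~$u$ in time, but the Harris hypotheses, and hence Theorem~\ref{t:EDIntro}, are symmetric under this reversal. Because $S_t\mathbf{1} = 1$, the kernel satisfies $\int_{\T^d} p^\kappa_t(x, y)\,dy = 1$, and so $\rho(x, t) - \bar\rho = \int_{\T^d}(p^\kappa_t(x, y) - 1)\rho_0(y)\,dy$.

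Next I would extract pointwise control on the kernel by mollification. Fix $x, y \in \T^d$ and let $\rho_0^\varepsilon$ be smooth nonnegative mollifiers concentrating at~$y$ with $\|\rho_0^\varepsilon\|_{L^1} = 1$, so $\bar{\rho_0^\varepsilon} = 1$ and $\|\rho_0^\varepsilon - \bar{\rho_0^\varepsilon}\|_{L^1} \leq 2$. Applying~\eqref{e:rho-L1-Linf} to $\rho_0^\varepsilon$ gives $|\rho^\varepsilon(x, t) - 1| \leq \frac{2 D_\kappa}{\kappa^{d/2 + \alpha}} e^{-\gamma t}$. By parabolic regularity, $p^\kappa_t(x,\cdot)$ is continuous for $t > 0$, so $\rho^\varepsilon(x, t) \to p^\kappa_t(x, y)$ as $\varepsilon \to 0$, yielding
\begin{equation*}
  |p^\kappa_t(x, y) - 1| \leq \frac{2 D_\kappa}{\kappa^{d/2 + \alpha}}\,e^{-\gamma t}
\end{equation*}
uniformly in $x, y \in \T^d$.

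Finally, solving $\frac{2 D_\kappa}{\kappa^{d/2+\alpha}} e^{-\gamma t} < \epsilon$ for $t$ gives~\eqref{e:tmix-bound} after replacing $D_\kappa$ by $2 D_\kappa$, and~\eqref{e:D-kappa-bound} is inherited with $\bar D_q$ multiplied by~$2^q$. The only delicate point is identifying the PDE fundamental solution with $p^\kappa_t(x,y)$ in the time-inhomogeneous case, which is handled by invoking the time-reversal symmetry of the Harris hypotheses; the rest is a routine duality and approximation argument.
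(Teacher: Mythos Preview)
Your proposal is correct and follows essentially the same duality strategy as the paper: deduce a uniform bound on $p^\kappa_t(x,\cdot)-1$ from the $L^1\!\to\!L^\infty$ estimate of Theorem~\ref{t:EDIntro}, then invert to obtain~\eqref{e:tmix-bound}. The paper's version is slightly more direct---it reads off the kernel bound by taking the supremum over all $\rho_0\in L^1$ rather than via mollification (so no appeal to continuity of $p^\kappa_t$ is needed), and it simply asserts the representation $\rho_t(x)=\int p^\kappa_t(x,y)\rho_0(y)\,dy$ without the time-reversal discussion you raise.
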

\begin{remark}
  We clarify that~$\tmix^\infty(X^\kappa, \epsilon)$ is random as it depends on~$u$ (it is, of course, independent of~$W$).
  %As with Remark~\ref{r:chebychev}, Theorem~\ref{t:umix} implies that
  For any fixed~$\epsilon > 0$ (independent of~$\kappa$), Chebychev's inequality~\eqref{e:D-kappa-bound} and~\eqref{e:tmix-bound} show that for any~$\beta > 0$ we have
  \begin{equation}
    \P\paren[\Big]{
      \tmix^\infty(X^\kappa, \epsilon)
	\leq \frac{1}{\gamma} \ln \paren[\Big]{
	  \frac{1}{\epsilon \kappa^{\frac{d}{2} + \alpha + \beta} }
	}
      }
      \geq 1 - \bar D_q \kappa^{\beta q}
    %\quad\text{with probability at least}
    %\quad 1 - o(\kappa^N)
    \,.
  \end{equation}
  Moreover, as~$\epsilon \to 0$, we obtain the $\kappa$-independent uniform mixing time bound
  \begin{equation}
    \lim_{\epsilon \to 0} \frac{\tmix(X^\kappa, \epsilon)}{\abs{\ln \epsilon}}
      \leq \frac{1}{\gamma}
      \,,
    \quad\text{almost surely}
    \,.
  \end{equation}
\end{remark}

\begin{remark}\label{r:checkable-conditions}
  While Assumptions~\ref{a:fts}--\ref{a:flow} are easy to state (see Section~\ref{s:harris}, below), they aren't easy to verify in practice.
  Recent papers~\cite{BedrossianBlumenthalEA22,BlumenthalCotiZelatiEA22} instead assume certain conditions which are stronger than Harris conditions, but are easier to verify.
  For convenience of the reader, we state these conditions in Section~\ref{s:checkableConditions}, below.
\end{remark}

\begin{remark}[Pulsed diffusions]
  We can also obtain similar results for pulsed diffusions.
  Namely, define the Markov process $Y^\kappa$ by
  \begin{equation}
    Y^\kappa_{n+1} = \varphi_{n+1}(Y^\kappa_n) + \zeta^\kappa_{n+1}
    \,,
  \end{equation}
  where~$\zeta^\kappa_n$ are i.i.d.\ periodized Gaussians with variance~$\kappa$, and~$\varphi_n$ is a RDS on~$\T^d$, independent of~$\zeta^\kappa$.
  If the RDS~$\varphi_n$ satisfies the Harris conditions (Assumption~\ref{a:fts}--\ref{a:submersion-2point}), then we obtain the same mixing time bound~\eqref{e:tmix-bound} for the process~$Y^\kappa$.
  The proof is similar to the proof of Theorem~\ref{t:umix}, and in this setting several technical steps become much simpler.
\end{remark}

As an immediate consequence, we can show enhanced dissipation if~$u$ is obtained by randomly shifting and alternating sinusoidal shears.

\begin{corollary}\label{c:alt-shear}
  Let $A > 0$, $d = 2$ and $\zeta_n$ a sequence of i.i.d.\ random variables that are uniformly distributed on $[0, 1]$.
  For $n \in \N$ and $t \in [2n, 2n+2)$ define
  \begin{equation}\label{e:alt-shear-def}
    u(x, t) =
      \begin{cases}
	A \sin( 2\pi (x_2 - \zeta_{2n}) ) \, e_1
	  & t \in [2n, 2n+1) \,,
	\\
	A \sin( 2\pi (x_1 - \zeta_{2n+1}) ) \, e_2
	  & t \in [2n+1, 2n+2) \,,
      \end{cases}
  \end{equation}
  where $x = (x_1, x_2) \in \T^2$, and $e_i$ is the $i$-th standard basis vector.
  Then, almost surely, we have the enhanced dissipation bound~\eqref{e:rho-L1-Linf} for a random variable~$D_\kappa$ that satisfies the $\kappa$-independent bounds~\eqref{e:D-kappa-bound}.
  Consequently, with probability at least $1 - \bar D_q \kappa^{\beta q}$, the enhanced dissipation bound~\eqref{e:ED-L1-Linf} holds.
  %for some~$T(\kappa)$ satisfying
  %\begin{equation}\label{e:T-kappa}
  %  T(\kappa) \leq C_N \abs{\ln \kappa} \,.
  %\end{equation}
\end{corollary}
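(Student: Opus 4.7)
The plan is to apply Theorem~\ref{t:EDIntro} to the velocity field~$u$ defined in~\eqref{e:alt-shear-def}, so the entire task reduces to showing that the flow of~$u$ generates an RDS satisfying the Harris conditions (Assumptions~\ref{a:fts}--\ref{a:flow}). Since these are somewhat delicate, I would follow Remark~\ref{r:checkable-conditions} and instead verify the stronger but more tractable conditions collected in Section~\ref{s:checkableConditions}, which are modelled on the hypotheses of~\cite{BedrossianBlumenthalEA22,BlumenthalCotiZelatiEA22}.

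First I would set up the discrete RDS explicitly. Since $u$ is piecewise a single sinusoidal shear, the time-$1$ flow over the interval $[n,n+1)$ is
\begin{equation*}
  \Phi_n(x_1,x_2) =
    \begin{cases}
      (x_1 + A\sin(2\pi(x_2 - \zeta_n)),\, x_2) & n \text{ even},\\
      (x_1,\, x_2 + A\sin(2\pi(x_1 - \zeta_n))) & n \text{ odd},
    \end{cases}
\end{equation*}
and the one-period map $\varphi_n = \Phi_{2n+1}\circ\Phi_{2n}$ defines an i.i.d.\ RDS of smooth, area-preserving diffeomorphisms on~$\T^2$, driven by the uniform noise $(\zeta_{2n},\zeta_{2n+1})$. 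The smoothness, measurability, and integrability requirements of Assumption~\ref{a:fts} are then immediate from the explicit formula.

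The heart of the argument is the \emph{two-point} nondegeneracy condition (Assumption~\ref{a:submersion-2point}): I need to show that the law of $(\varphi_n(x),\varphi_n(y))$ admits a sufficiently regular density for $x\neq y$. For a pair with $x_2 \neq y_2$, differentiating $\Phi_{2n}(x) - \Phi_{2n}(y)$ in $\zeta_{2n}$ produces a vector $-2\pi A(\cos(2\pi(x_2 - \zeta_{2n})) - \cos(2\pi(y_2 - \zeta_{2n})))\, e_1$ which is generically nonzero; composing with $\Phi_{2n+1}$ and differentiating in $\zeta_{2n+1}$ then fills in the $e_2$-direction. Thus the Jacobian of $(\zeta_{2n},\zeta_{2n+1}) \mapsto (\varphi_n(x),\varphi_n(y))$ has full rank off the coincidence set $\{x_2 = y_2\} \cup \{x_1 = y_1\}$. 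Points on these coordinate planes are swept off them after one additional iterate of $\varphi_n$ by the shear that acts transversely.

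The main obstacle I anticipate is making this estimate \emph{uniform} near the coincidence planes, where the Jacobian vanishes and the density of the pushed-forward noise degenerates. The standard remedy, which I would implement here, is to pair the above pointwise submersion estimate with a Lyapunov function on $\T^2 \times \T^2 \setminus \Delta$ that blows up near the coincidence set, and to show that the two-point chain satisfies a drift inequality with controlled moments. This yields a geometric return to a compact good set where the minorization constant is bounded below. Once Assumptions~\ref{a:fts}--\ref{a:flow} are verified, Theorem~\ref{t:EDIntro} applies directly and produces~\eqref{e:rho-L1-Linf} with the moment bound~\eqref{e:D-kappa-bound}; the high-probability statement~\eqref{e:ED-L1-Linf} then follows from Remark~\ref{r:chebychev}.
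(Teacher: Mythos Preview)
Your approach is essentially the paper's: verify the checkable conditions of Section~\ref{s:checkableConditions} (which the paper defers entirely to~\cite{BlumenthalCotiZelatiEA22}), invoke Proposition~\ref{p:checkable-conditions} to obtain Assumptions~\ref{a:fts}--\ref{a:flow}, and apply Theorem~\ref{t:EDIntro}. The paper also supplements this with an explicit Lyapunov function $V(x,y)=|x-y|_\infty^{-p}$ in Proposition~\ref{p:alt-shear-lyapunov}, which is close in spirit to the drift function you allude to.

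There is, however, a dimension-counting slip in your submersion sketch. With one period of the shear ($\mathscr M=\T^2$ of noise, one $\zeta$ per half-period), the map $(\zeta_{2n},\zeta_{2n+1})\mapsto(\varphi_n(x),\varphi_n(y))$ goes from a $2$-dimensional source into the $4$-dimensional target $\T^{2,(2)}$, so it can never be a submersion; your computation really only checks full rank for the \emph{difference} $\varphi_n(x)-\varphi_n(y)$. Assumption~\ref{a:submersion-2point} requires $\mathscr X_n^{(2)}(\cdot,x_*)$ to be a submersion onto $\T^{2,(2)}$, which needs at least $n=2$ periods (four random shifts) and a check that the four partial derivatives span $T_{(\cdot,\cdot)}\T^{2,(2)}$. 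Also, Assumption~\ref{a:fts} is not a regularity/integrability statement as you wrote; it asserts that the two-point chain is Feller, topologically irreducible, and strongly aperiodic, which needs its own (short) argument. Once these are fixed, your outline goes through.
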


This example was introduced by Pierrehumbert~\cite{Pierrehumbert94} and~\cite{BlumenthalCotiZelatiEA22} recently verified that it satisfies conditions that are stronger than our Harris assumptions.
As a result the Corollary~\ref{c:alt-shear} follows immediately from Theorem~\ref{t:EDIntro} and~\cite{BlumenthalCotiZelatiEA22} (see Proposition~\ref{p:checkable-conditions}, below).
We mention, however, that even though~\cite{BlumenthalCotiZelatiEA22} abstractly check Assumption~\ref{a:Lyapunov} (the existence of a Lyapunov function) for~\eqref{e:alt-shear-def}, the system is simple enough that a Lyapunov function can be constructed explicitly.
We do this in Proposition~\ref{p:alt-shear-lyapunov} in Section~\ref{s:alt-shear-lyapunov}, below.

\begin{remark}
  Instead of using a sine shear profile in~\eqref{e:alt-shear-def}, we can use shears with a piecewise linear profile.
  In this case the results in~\cite{ChristieFengEA23} will show that Assumption~\ref{a:fts}--\ref{a:flow} are satisfied, and so Theorems~\ref{t:EDIntro}, \ref{t:umix} apply and will give the enhanced dissipation bound~\eqref{e:rho-L1-Linf}, and the equivalent uniform mixing time bound~\eqref{e:tmix-bound}.
\end{remark}

\subsection{Motivation and Literature Review}

We now survey the literature and place our results in the context of existing results.
\emph{Enhanced dissipation} is a phenomenon that can be observed in every day life:
Pour some cream in your coffee.
If left alone, it will take hours to mix.
Stir it a little and it mixes right away.
%Why is that?
This effect arises due to the interaction between the advection (stirring) and diffusion, and plays an important role in many applications concerning hydrodynamic stability and turbulence and occurs on scales ranging from micro fluids to meteorological / cosmological~\cite{
  LinThiffeaultEA11,
  Thiffeault12,
  Aref84,
  StoneStroockEA04
}.
% TODO: references

To describe this mathematically, let $u$ be the velocity field of the ambient incompressible fluid, and~$\rho$ denote the concentration of a passively advected solute with molecular diffusivity $\kappa > 0$.
The evolution of~$\rho$ is governed by the advection diffusion equation~\eqref{e:ad}.
For simplicity, in this paper we only consider~\eqref{e:ad} with periodic boundary conditions on the $d$-dimensional torus $\T^d$.
%We will also always assume $u$ is spatially Lipschitz, uniformly in time.

If the ambient fluid is incompressible, the velocity field~$u$ satisfies divergence free condition
\begin{equation}\label{e:incompressible}
  \dv u = 0\,.
\end{equation}
In this case, an elementary energy estimate shows~\eqref{e:EE-Poincare} and hence the $L^2$ distance of the concentration from the equilibrium distribution decreases at most exponentially with a rate proportional to~$\kappa$.

Of course, \eqref{e:EE-Poincare} is only a crude upper bound.
In many practical situations one expects the convergence to happen much faster than~\eqref{e:EE-Poincare}.
Indeed, the advection term $u \cdot \grad \rho$ typically causes filamentation and moves energy towards small scales.
The diffusion term $\kappa \lap \rho$ damps small scales faster, and the combination of these two effects leads to enhanced dissipation -- faster convergence of~$\rho(\cdot, t)$ to~$\bar \rho$.
%Enhanced dissipation is the phenomenon by which $\rho(\cdot, s + t) \to \bar \rho$ faster than the upper bound in~\eqref{e:EE-Poincare}.

Several authors have proved enhanced dissipation by showing all solutions to~\eqref{e:ad} satisfy the decay estimate
\begin{subequations}
  \begin{equation}\label{e:EDL2Decay}
    \norm{\rho(\cdot, t) - \bar \rho}_{L^2} \leq \exp\paren[\Big]{ -\paren[\Big]{ \frac{t}{T(\kappa)}  - 1 }^+ }
  \norm{\rho_0 - \bar \rho }_{L^2} \,,
  \end{equation}
  for every $t \geq 0$, and some time scale $T(\kappa)$ for which
  \begin{equation}\label{e:Tkappa-small}\noeqref{e:Tkappa-small}
    \lim_{\kappa \to 0} \kappa T(\kappa) = 0\,.
  \end{equation}
\end{subequations}
Seminal work of Constantin et\ al.~\cite{ConstantinKiselevEA08} (see also~\cite{Zlatos10,KiselevShterenbergEA08}) shows that if $u$ is time independent, then such a $T(\kappa)$ exists if and only if $u \cdot \grad$ has no eigenfunctions in $H^1$.
For shear flows classical work of Kelvin~\cite{Kelvin87} shows one can choose $T(\kappa) = \kappa^{-\alpha}$ for some $\alpha < 1$.
There are now several results studying enhanced dissipation in more generality and for nonlinear equations (see~\cite{FannjiangNonnenmacherEA04,Wei19,BedrossianCotiZelati17,CotiZelatiDrivas21,FengMazzucatoEA23,CotiZelatiGallay23,AlbrittonBeekieEA22,CobleHe23,Seis23}).

The purpose of this paper is to further investigate the link between enhanced dissipation and mixing properties of~$u$.
Recall, a velocity field~$u$ is said to be \emph{exponentially mixing} if, in the absence of diffusion, a dye that is initially localized to ball of size~$\epsilon$ will get spread throughout the torus in time $O(\abs{\ln \epsilon})$ (see for instance~\cite{SturmanOttinoEA06}).
In the presence of diffusion, a dye localized to a point gets spread to a ball of size $O(\sqrt{\kappa})$ in time $O(1)$.
If $u$ is exponentially mixing, then this dye is spread throughout the torus by the flow in time $O(\abs{\ln \kappa})$.
As a result, in this case we expect~\eqref{e:EDL2Decay} should hold with $T(\kappa) = O(\abs{\ln \kappa})$.

Surprisingly, this is not easy to prove, and is an open question in this generality.
Currently available results~\cite{FengIyer19,Feng19,CotiZelatiDelgadinoEA20} show that if~$u$ is exponentially mixing, then one can choose~$T(\kappa) = O(\abs{\ln \kappa}^2)$ in~\eqref{e:EDL2Decay}.
For a few specific exponentially mixing systems, available results~\cite{BedrossianBlumenthalEA21,ElgindiLissEA23,IyerLuEA23} show that one can choose~$T(\kappa) = O(\abs{\ln \kappa})$ in~\eqref{e:EDL2Decay}.
However, to the best of our knowledge, there is no general theorem (in discrete or continuous time) that shows that for any exponentially mixing flow one can choose~$T(\kappa) = O(\abs{\ln \kappa})$ in~\eqref{e:EDL2Decay}.

One elementary observation is that if almost every realization of the stochastic flows~$X^\kappa$ is exponentially mixing, then one has enhanced dissipation as in~\eqref{e:EDL2Decay} with~$T(\kappa) = C \abs{\ln \kappa}$ for some~$C$ that can be explicitly computed in terms of the mixing rate.
Thus, a natural question to ask is is whether or not the notion of exponentially mixing is stable with respect to~$\kappa$.
\begin{question}\label{q:XKappaExpMix}
  If~$u$ is exponentially mixing, then for sufficiently small~$\kappa > 0$ must almost every realization of~$X^\kappa$ be exponentially mixing (with a controlled rate)?
\end{question}

Since the notion of exponentially mixing involves the long time behavior, it is not easy to determine the answer to Question~\ref{q:XKappaExpMix}.
We instead look for stronger conditions on~$u$ which will will guarantee that for all sufficiently small~$\kappa > 0$, almost every realization~$X^\kappa$ is exponentially mixing (with a controlled rate).

A general principle that is well known to the Sinai school is that for \emph{random dynamical systems (RDS)}, geometric ergodicity of the two point process implies almost sure exponential mixing (see~\cite{DolgopyatKaloshinEA04,BedrossianBlumenthalEA22}, or the proof of Lemma~\ref{l:X-exp-mix}, below).
One could then ask whether or not this property is stable in~$\kappa$.

\begin{question}\label{q:twopoint}
  If the two point process associated to the flow of a random dynamical system~$u$ is geometrically ergodic, then is the two point processes associated to the SDE~\eqref{e:SDEX} also geometrically ergodic for all small~$\kappa > 0$?
\end{question}

If Question~\ref{q:twopoint} is answered affermatively, then for all sufficiently small~$\kappa > 0$, must almost every realization~$X^\kappa$ be exponentially mixing.
%This quickly implies the enhanced dissipation bound~\eqref{e:EDL2Decay} with~$T(\kappa) = O(\abs{\ln \kappa})$ (see the proof of Theorem~\ref{t:EDIntro}, below).
Not surprisingly, this question is also hard to answer.
Geometric ergodicity involves questions about long time limits which are not stable as~$\kappa$ varies.
There is, however, a classical result of Harris~\cite{Harris55,MeynTweedie09} that proves geometric ergodicity of a Markov process provided there is a Lyapunov function, and a small set.
A version of this condition turns out to be stable in~$\kappa$ (see Lemma~\ref{l:harrisassumptions}, below) which in turn leads to almost sure exponential mixing of the flows~$X^\kappa$ (see Lemma~\ref{l:X-exp-mix}, below), which in turn yields Theorem~\ref{t:EDIntro}.

%Of course (by Remark~\ref{r:chebychev}) this implies~\eqref{e:EDL2Decay} holds with~$T(\kappa) = O(\abs{\ln \kappa})$ with probability $1 - o(\kappa^N)$.

\GI[2024-03-16]{Fix references and edit}
Finally, we mention that if $u$ is regular, and uniformly bounded in time, then we must have $T(\kappa) \geq O(\kappa)$ in~\eqref{e:EDL2Decay} (see for instance~\cite{MilesDoering18,Poon96,BedrossianBlumenthalEA21,Seis22}).
When $u$ is irregular, one can even choose $T(\kappa) = O(1)$.
This is known as~$\emph{anomalous dissipation}$, and examples of this were recently proved in~\cite{DrivasElgindiEA22,ColomboCrippaEA22,ArmstrongVicol23}.
In this paper we only consider regular velocity fields, and so are in a situation where anomalous dissipation can not ocur.

\subsection*{Plan of this paper.}

In Section~\ref{s:harris} we define our notation and state Assumption~\ref{a:fts}--\ref{a:flow} used in Theorems~\ref{t:EDIntro} and~\ref{t:umix}.
In Section~\ref{s:main-proof} we state the three lemmas (Lemmas~\ref{l:harrisassumptions}--\ref{l:X-exp-mix}) that will quickly yield Theorems~\ref{t:EDIntro} and~\ref{t:umix}, and use these lemmas to prove Theorems~\ref{t:EDIntro} and~\ref{t:umix}.
The first of these lemmas (Lemma~\ref{l:harrisassumptions}) is the main new contribution of this paper and which guarantees that our assumptions imply the Harris conditions hold for all sufficiently small~$\kappa > 0$.
The proof of Lemma~\ref{l:harrisassumptions} is split up into two steps -- the existence of a Lyapunov function in Section~\ref{s:Lyapunov}, and the existence of a $\kappa$-independent small set in Section~\ref{s:small-set}.
Following this, we prove Lemma~\ref{l:harrisassumptions} in Section~\ref{s:verify-harris-assumptions}.
The proof of Lemma~\ref{l: uge} uses a quantitative version of Harris's theorem~\cite{HairerMattingly11} and is presented in Section~\ref{s:uge}.
The proof of Lemma~\ref{l:X-exp-mix} is based on the idea in~\cite{DolgopyatKaloshinEA04} and is presented in Section~\ref{s:bc}.
Finally, we conclude this paper by explicitly finding a Lyapunov function for the randomly shifted alternating shear example in~\ref{c:alt-shear}.

\subsection*{Acknowledgements.}

The authors wish to thank Alex Blumenthal and Sam Punshon-Smith for helpful comments.
% TODO: Maybe say more once we're done parsing everything.

\section{Notation and Preliminaries.}\label{s:assumptions}

In this section we set up our notational convention and state the assumptions required for Theorem~\ref{t:EDIntro} and~\ref{t:umix}.

\subsection{Notation and setup.}
We will now define a setup that chooses the velocity field~$u$ randomly from a finite dimensional family of~$C^2$, incompressible vector fields, and repeats this choice on after time intervals of length~$1$.
Let $\mathscr M$ be a complete smooth Riemannian manifold, and $\mathscr U \colon \mathscr M \times \T^d \times [0, 1] \to \R^d$ be a $C^2$-function such that
\begin{equation}
  \nabla_x \cdot \mathscr U(\xi, x, t) = 0
  \,.
\end{equation}

Now let $(\Omega_0, \mathcal{F}_0, \P_0)$ be a probability space and let $\omega = (\omega_0, \omega_1, \dots )$ be a sequence of $\mathscr M$-valued random variables whose distribution is absolutely continuous with respect to the volume measure on~$\mathscr M$.
Define the (random) velocity field~$u$ by
\begin{equation}
  u(x, t) \defeq \mathscr U(\omega_n, x, t-n) \quad\text{when } t \in [n, n+1 )
  \,,
\end{equation}
where, following standard convention, we suppress the dependence of~$u$ on~$\omega$.
The flow of~$u$ is defined by the ODE
\begin{equation}\label{e:RDSu}
  \partial_t X^0_t = u( X^0_t, t )
  \,,
  \qquad
  X^0_0 = \mathrm{Id}
  \,.
\end{equation}
Restricting~$X^0$ to integer times gives a Markov process, which we refer to as the RDS generated by~$u$.

To define the processes~$X^\kappa$, let $(\Omega_W, \mathcal{F}_W, \P_W)$ be a probability space, and $W$ be a $\T^d$-valued Brownian motion on this space.
We will now consider the product space $\Omega = \Omega_W \times \Omega_0$ with the product $\sigma$-algebra $\mathcal F = \mathcal F_W \otimes \mathcal F_0$ and product measure $\P \defeq \P_W \otimes \P_0$.
By a slight abuse of notation, we will sometimes treat random variables on each of the coordinate spaces~$\Omega_0$, $\Omega_W$ as random variables on the product space~$\Omega$ by composing with the corresponding coordinate projection.
In this sense, we may treat~$u, W$ as independent processes on~$\Omega$, and let $X^\kappa_{t}$ be the solution of~\eqref{e:SDEX} on~$\T^d$.

\subsection{Two point and projective chains.}
In order to state the Harris conditions, we need to define the~\emph{two point} and~\emph{projective chains}, and formalize the dependence on the noise history.
Given $\xi = (\xi_0, \dots, \xi_{n-1}) \in \mathscr M^n$, define $\mathscr U_n \colon \mathscr M^n \times \T^d \times [0, n) \to \R^d$ by
\begin{equation}
  \mathscr U_n( \xi, x, t ) = \mathscr U( \xi_m, x, t-m )
    \quad\text{if } t \in [m, m + 1)
  \,.
\end{equation}
Now define $\mathscr X_n\colon \mathscr M^n \times \T^d \to \T^d$ to be the flow of $\mathscr U_n$ after time~$n$.
That is, define
%Moreover, for every $\xi \in \mathscr M$, $x \in \T^d$,
\begin{equation}
  \mathscr X_n(\xi, x) = \varPhi_{n}(x)\,,
\end{equation}
where~$\varPhi$ is defined by
\begin{equation}\label{e:flow}
  \partial_t \varPhi_{t} = \mathscr U_n( \xi, \varPhi_{t}, t )\,,
  \quad
  \varPhi_{0} = \mathrm{Id} \,.
\end{equation}

Given $\xi \in \mathscr M^n$, $(x, v) \in T \T^d$ (the tangent bundle of the torus), and $y \in \T^d$ with $\norm{v} = 1$, $y\neq x$, we define the derivative, projective and two point maps by
\begin{equation}
  \mathscr A_n( \xi, x )
    \defeq D_x \mathscr X_n(\xi, x )\,,
  \quad
  \hat{\mathscr X}_n(\xi, x, v) =
    \paren[\Big]{\mathscr X_n(\xi, x),
      \frac{\mathscr A_n(\xi, x) v}{\norm{\mathscr A_n(\xi, x) v}}
    }
\end{equation}
and
\begin{equation}\label{e:curlyX2}
  \mathscr X_n^{(2)}(\xi, x, y) = (\mathscr X_n(\xi, x), \mathscr X_n(\xi, y))
  \,,
\end{equation}
respectively. Note that $\hat{\mathscr X_n}$ takes values on $S\T^d$, the unit sphere bundle (see for instance~\cite{doCarmo92}) defined by
\begin{equation}
S\T^d \defeq \set{(x,v)\in T\T^d\st \norm{v}=1}\,.
\end{equation}
The map $\mathscr X^{(2)}_n$ takes values on $\T^{d, (2)}$ where
\begin{gather}
\T^{d, (2)}\defeq \T^d \times \T^d \setminus \Delta\,,
\end{gather} and
$\Delta \defeq \set{(x,y) \in \T^d \times \T^d \st x=y}$.
\medskip

Notice that the RDS defined by~\eqref{e:RDSu} can be written in terms of~$\mathscr X_n$ using the identity
%is precisely the Markov process specified by choosing~$\xi$ to be the first~$n$ terms in~$\omega$. That is,
\begin{equation}
  X^0_n(x) = \mathscr X_n( \underline{\omega_n}, x )\,,
  \quad\text{where}\quad
  \underline{\omega_n} \defeq (\omega_0, \dots, \omega_{n-1}) \in \mathscr M^n
  \,,
\end{equation}
for~$n \geq 1$.
We clarify that for~$t \in [n-1, n)$ the velocity field~$u$ in~$\eqref{e:RDSu}$ is determined from~$\omega_{n-1}$,  the~$n^\text{th}$ term in~$\omega$.
The flow map~$X^0_n$, however, relies on the entire history~$\underline{\omega_n} = (\omega_0, \dots, \omega_{n-1})$.

%Note that our notation is consistent as the Markov process~$X^0$ at time~$n$ coincides with the solution of~\eqref{e:SDEX} at time~$n$ with~$\kappa = 0$.
We also define the derivative, projective and two point processes by
\begin{equation}
  %\label{e:An0def}
  A^0_n(x) = \mathscr A_n( \underline{\omega_n}, x )
  \,,
  \quad
  %\label{e:Xn0HatDef}
  \hat X_n(x, v) = \hat{\mathscr X}_n( \underline{\omega_n}, x, v )\,,
  \quad
  %\label{e:Xn02def}
  X^{0, (2)}_n(x, y) = \mathscr X^{(2)}_n( \underline{\omega_n}, x, y )
  \,,
\end{equation}
where as before $(x, v) \in S \T^d$ and $(x,y) \in \T^{d, (2)}$.
We denote the $n$-step transition kernel of these processes by
\begin{gather}
  P_0^n(x,A) \defeq \P_0[X_n^0(x)\in A]
  \,,\\
  \quad
  \hat P_0^n ((x,v), A) \defeq \P_0\Big[\hat X_n^0(x,v)\in A\Big]\,,
\\
  \label{e:two-point-def}
  P_0^{(2), n}((x,y), A) \defeq \P_0\big[X_n^{0, (2)} \in A\big]\,.
\end{gather}
For brevity, when $n = 1$ we will drop the superscript.
The action of $P_0^n$ on functions is defined defined by
\begin{equation}
  P_0^n f(x)
  = \int_{\T^d} P_0^n( x, dy ) f(y) 
  = \E_0 f(X^0_n(x))
  \,,
\end{equation}
where~$\E_0$ is the expectation with respect to the measure~$P_0$.
The action of~$\hat P_0^n$ and~$P_0^{(2), n}$ on functions are defined similarly.

\iffalse
\BC[2024-02-13]{I don't think we should use the letter $M$ here because it conflicts with $M^{(2)}$ being used for the two-point chain's state space later on. Also, why not just let the probability space $\Omega_u$ be the Riemannian manifold, and let $F_n$ be the random variables? Also, don't we need the sequence $\{F_n\}$ to be a bunch of iid copies of the same vector field or else the chain is not Markov?}

The conditions we assume (which will be stated shortly) will guarantee that $P_0$ is uniformly geometrically ergodic.
In this case, the limit
\begin{equation}
  \lim_{n\to \infty} \frac{1}{n}\log \norm{A_n(x)}
\end{equation}
exists and is constant for $\P_0 \times \Leb$-almost everywhere.
%, where $A_n$ is as in~\eqref{e: A_n}.
The value of this limit is the top Lyapunov exponent, and will be denoted by $\lambda_1$.
\medskip
\fi

%We will usually be interested in $X^\kappa_{0, t}$, and so for convenience we will use the notation $X^\kappa_t$ to denote $X^\kappa_{s, t}$.

For~$\kappa > 0$, we define the Markov process $X^\kappa$ obtained by restricting the solution to~\eqref{e:SDEX} to integer times.
Given $n \in \N$, let $P^n_\kappa(x, \cdot)$ denote the transition probability of $X^\kappa_{n}$.
That is, for $n \in \N$ we define
\begin{equation}
  P^n_\kappa(x, A) \defeq \P( X^\kappa_{n}(x) \in A )
  \,.
\end{equation}
As before, when $n = 1$, we drop the superscript and simply write $P_\kappa$ for $P^1_\kappa$.
The two point process~$X^{\kappa,(2)}$ is  defined analogously by
\begin{equation}
  X^{\kappa, (2)}_n(x, y) = (X^\kappa_n(x), X^\kappa_n(y) )
  \,,
\end{equation}
and its $n$-step transition kernel will be denoted by~$P_\kappa^{(2), n}$.
%to~\eqref{e:Xn0HatDef}, \eqref{e:Xn02def}, and their transition kernels will be denoted by $\hat P_\kappa^n$ and $ P_\kappa^{(2),n}$ respectively.
\smallskip

\subsection{Harris Conditions.}\label{s:harris}

Harris theorems typically assume the existence of a Lyapunov function and a small set, and show that the associated system is uniformly geometrically ergodic~\cite{MeynTweedie09,Harris55,HairerMattingly11}.
We will now state these conditions for the two point chain~$P^{(2)}_0$ defined in~\eqref{e:two-point-def}.
We reiterate that our assumptions only concern the RDS generated by the flow of~$u$ with~$\kappa = 0$.

\begin{assumption}\label{a:fts}
The two point chain with kernel $P_0^{(2)}$ is Feller, topologically irreducible, strongly aperiodic.
\end{assumption}

Our next assumption concerns the existence of a Lyapunov function for the two point chain near a small neighborhood of the diagonal.
To state this, let $s > 0$ and define a punctured neighborhood of the diagonal~$\Delta(s)$ by
\begin{equation}
  \Delta(s)\defeq \{(x,y)\in \T^d \times \T^d  \st 0 < d(x,y) < s\}\,,
\end{equation}
where $d(x, y)$ is the torus distance between~$x$ and~$y$.
On the flat torus it is easy to see that for $x, y \in \Delta(1/2)$, we can find a unique $v \in T_x \T^d$ such that $\exp_x(v) = y$, where $\exp_x$ denotes the exponential map (see for instance~\cite{doCarmo92}).
\iffalse
For the flat torus, one can find a finite atlas $\set{ (\mathcal U_i, \check \varphi_i) }$ such that
\begin{equation}
  \exp_x^{-1}(y) = \check \varphi_i(y) - \check \varphi_i(x)
    \quad\text{for all } x, y \in \mathcal U_i
    \,.
\end{equation}
\fi

\begin{assumption} \label{a:Lyapunov}
There exists a Lyapunov function~$V\colon  \T^{d, (2)} \to [1,\infty)$ and constants $\tilde \gamma \in (0,1)$, $s_* \in (0,1/2)$ such that
  \begin{equation}\label{e:LyapunovEpEq0}
    P^{(2)}_0 V < \tilde \gamma V
    \quad\text{on } \Delta(s_*)
    \,.
    \end{equation}
    Moreover, there exists $p \in (0, 1)$ such that the function $V$ is of the form
    \begin{equation}\label{e: LyapunovV}
      %V(x,y) = d(x,y)^{-p}\psi\paren[\Big]{x, \frac{\exp_x^{-1}(y)}{d(x,y)}}\chi(x,y) + \hat c(1-\chi(x,y))\,,
      V(x,y) = d(x,y)^{-p}\psi\paren[\Big]{x, \frac{\exp_x^{-1}(y)}{d(x,y)}}
      \quad\text{on } \Delta(s_*)
      \,.
    \end{equation}
    where $\psi:S\T^d\to \R^+$ is a continuous, strictly positive function.
    %And, $V$ is bounded on $\Delta(s_*)^c$ (We've never assumed $V$ is an extension of the form above) or just assume $V$ is continuous on the entire $\T^{d, (2)}$ for the simplicity?
    %function and $\chi: \T^{d, (2)} \to [0, 1]$ is a smooth cutoff function with $\chi = 1$ on $\Delta(1/2)$ and $\chi = 0$ on $\Delta(\frac{3}{2}s_0)^c$.
\end{assumption}

The standard Harris Theorem~\cite{MeynTweedie09} requires the assumptions~\ref{a:fts},~\ref{a:Lyapunov}, and the existence of a small set for $P_0^{(2)}$, and shows ergodicity of the chain.
However, for Lemma~\ref{l:X-exp-mix}, we will need to show that for sufficiently small~$\kappa > 0$ there exists a small set with~$\kappa$-independent bound on the minorizing measure.
We are presently unable to do this assuming only the existence of a small set for $P_0^{(2)}$.
We can, however, prove this under a slightly stronger condition which is not hard to verify in practice.

\begin{assumption}\label{a:submersion-2point}
There exist $n \geq 1$ and $(\xi_*, x_*)\in \mathscr M^n \times \mathscr \T^{d, (2)}$ such that the following hold.
\begin{enumerate}
  \item
    Let~$\rho_n$ be the density of the~$\mathscr M^n$-valued random variable $\underline{\omega_n}$.
    There exists $c, \epsilon >0$ such that for every $\xi \in \mathscr M^n$ with $\abs{\xi - \xi_*} < \epsilon$, we have $\rho_n(\xi) \geq c >0$.

  \item
    The map $\mathscr X^{(2)}_n(\cdot,x_*)\colon \mathscr M^n \to \mathscr \T^{d, (2)}$ is a submersion at $\xi=\xi_*\,.$
\end{enumerate}
\end{assumption}

Finally, we need a uniform bound on the velocity field so that the gradients of the diffeomorphisms~$\mathscr X_n$ are controlled uniformly in the noise.
\begin{assumption}\label{a:flow}
  The function~$\mathscr U\colon \mathscr M \times \T^d \times[0, 1] \to \R^d$ is
   such that% for any $\xi \in \mathscr M$ and $t \in [0, 1]$,
  \begin{align}
  %\mathscr U (\xi, \cdot, t) \in C^2(\T^d)\,, \\
  %\nabla_x \cdot \mathscr U( \xi, x, t ) = 0 \,,
  %\qquad
  \sup_{(\xi, t) \in \mathscr M \times [0,1]} \norm{\mathscr U(\xi, \cdot, t)}_{C^2(\T^d)} &< \infty\\
  \llap{\text{and}\quad}
  \sup_{(x,t)\in \T^d\times[0,1]}\norm{\nabla_\xi \mathscr U(\cdot, x,t)}_{L^\infty(\mathscr M)} &<\infty\,.
  \end{align}
\end{assumption}

We will prove (Section~\ref{s:Lyapunov}, below) that Assumptions~\ref{a:Lyapunov} and~\ref{a:flow} imply that~$V$  is a Lyapunov function for $P_\kappa^{(2)}$, and satisfies the drift condition~\eqref{e:LyapunovEpEq0} with slightly larger constants.
Following this, we will show (Lemma~\ref{l:harrisassumptions}, below) that these assumptions will also ensure $P_\kappa^{(2)}$ satisfies the assumptions of the Harris theorem with $\kappa$-independent constants.
As a result, a quantitative Harris theorem~\cite{HairerMattingly11} will show that $P_\kappa^{(2)}$ is uniformly geometrically ergodic with a $\kappa$-independent rate (Lemma~\ref{l: uge} in Section~\ref{s:uge}, below).
Once this has been established, an argument of~\cite{DolgopyatKaloshinEA04} will imply~$X^\kappa$ is exponentially mixing, and prove Lemma~\ref{l:X-exp-mix} (Section~\ref{s:bc}).
%In general, the geometric ergodicity of the two-point chain implies almost sure exponential mixing of the random dynamical system, and principle has been frequently used in papers such as \cite{DolgopyatKaloshinEA04,BedrossianBlumenthalEA22,BlumenthalCotiZelatiEA22}.

\subsection{Checkable Conditions that Guarantee the Harris Conditions.}\label{s:checkableConditions}

%A well known principle~\cite{KaloshinDolgopyatEA05} is that geometric ergodicity of the two point chain $P_0^{(2)}$ implies exponential mixing of the RDS $X^0$.
In practice it is not easy to find a Lyapunov function (Assumption~\ref{a:Lyapunov}).
As mentioned in Remark~\ref{r:checkable-conditions}, recent papers~\cite{BedrossianBlumenthalEA22,BlumenthalCotiZelatiEA22} instead assume certain conditions which are stronger than Harris conditions, but are easier to verify.
For convenience of the reader, we state these conditions here.
%The main result of this paper is that these conditions will also imply enhanced dissipation, and we now state these conditions precisely.

%Here, we denote the density of $\dist(\zeta_1)$ as $\rho_0$ and with slight abuse of notation, we still use $\rho_0$ for the density of $\dist(\underline{\zeta_n})$.

\begin{assumption}\label{a:checkableConditions1}
    The transition kernels $P_0$, $\hat P_0$ and $P^{(2)}_0$ are all Feller, and topologically irreducible.
\end{assumption}

\begin{assumption}\label{a:checkableConditions2}
Let $\mathscr N$ be one of the spaces $\T^d, S\T^d,$ and $\T^{d, (2)}$ and $\mathscr Y_k$ be one of the corresponding maps $\mathscr X_k$, $\hat {\mathscr X}_k$, or  $\mathscr X^{(2)}_k$.
For each choice of $\mathscr N$ and $\mathscr Y$, there exist $n \geq 1$ and $(\xi_*, x_*)\in \mathscr M^n \times \mathscr N$ such that the following hold.
\begin{enumerate}
  \item
    Let~$\rho_n$ be the density of the~$\mathscr M^n$-valued random variable $\underline{\omega_n}$.
    There exists $c, \epsilon >0$ such that for every $\xi \in \mathscr M^n$ with $\abs{\xi - \xi_*} < \epsilon$, we have $\rho_n(\xi) \geq c >0$.

  \item
    The map $\mathscr Y_n(\cdot,x_*)\colon \mathscr M^n \to \mathscr N$ is a submersion at $\xi=\xi_*\,.$
\end{enumerate}
\end{assumption}
\begin{assumption}\label{a:checkableConditions3}
  For each choice of $\mathscr N$ and $\mathscr Y$ as in Assumption~\ref{a:checkableConditions2}, there exist $\xi_{**} \in \supp (\dist(\zeta_1))$ and $y_* \in \mathscr N$ such that $\mathscr Y_1(\xi_{**}, y_*) = y_*$.
\end{assumption}
\begin{assumption}\label{a:checkableConditions4}
  Let $\mathscr N=\T^d, \mathscr Y=\mathscr X$, and let $n$, $(\xi_*, x_*)$ be as in Assumption~\ref{a:checkableConditions2}.
  Define a $C^1$-mapping $g\colon \mathscr M^n \to \text{SL}_d(\R)$ by
  \begin{equation}
    g(\xi) = \frac{1}{\abs{\det \mathscr A_n(\xi, x_*)}^\frac{1}{d}}\mathscr A_n(\xi, x_*)
    \,.
  \end{equation}
  %where $\text{SL}_d(\R)\defeq \set{B \in GL(d,\R)\st \abs{\det{B}}=1}$.
  Then, the restriction of the derivative $D_{\xi_*}g$ to $\ker D_\xi \mathscr X_n(\cdot, x_*) \subset T_\xi \mathscr M^n$ is surjective onto $T_{g(\xi_*)}\text{SL}_d(\R)$.
  \GI[2024-02-21]{Check if this is $\xi$ or $\xi_*$.}
\end{assumption}

These conditions are stronger than the Harris conditions in the following sense.
\begin{proposition}\label{p:checkable-conditions}\GI[2024-03-05]{Cleanup statement}
  %Assumption~\ref{a:checkableConditions1}--\ref{a:checkableConditions4} plus the upper/lower bound on $D\mathscr X(\xi, \cdot)$ imply Assumption~\ref{a:fts}--\ref{a:submersion-2point}.
  If Assumption~\ref{a:flow} and~\ref{a:checkableConditions1}--\ref{a:checkableConditions4} hold, then the Assumptions~\ref{a:fts}--\ref{a:submersion-2point} also hold.
  Hence, in this case, the enhanced dissipation estimate~\eqref{e:rho-L1-Linf} also holds for some random variable~$D_\kappa$ (depending on~$u$ and~$\kappa$, but independent of~$W$) satisfying~\eqref{e:D-kappa-bound}.
\end{proposition}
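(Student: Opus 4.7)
The plan is to verify Assumptions~\ref{a:fts},~\ref{a:Lyapunov}, and~\ref{a:submersion-2point} in turn, with the construction of the Lyapunov function being by far the main obstacle.

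Assumption~\ref{a:submersion-2point} is immediate, being the special case of Assumption~\ref{a:checkableConditions2} with $\mathscr N = \T^{d,(2)}$ and $\mathscr Y = \mathscr X^{(2)}$. For Assumption~\ref{a:fts}, the Feller property and topological irreducibility of $P_0^{(2)}$ are already part of Assumption~\ref{a:checkableConditions1}. Strong aperiodicity then follows by combining the $\mathscr X^{(2)}$-fixed point provided by Assumption~\ref{a:checkableConditions3} with the submersion+density condition in Assumption~\ref{a:checkableConditions2}: iterating the noise value $\xi_{**}$ keeps the two-point chain near the fixed point $y_*$, after which the density estimate at time $n$ coming from Assumption~\ref{a:checkableConditions2} furnishes a positive-density minorization on a fixed neighborhood of~$y_*$.

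The bulk of the work is constructing the Lyapunov function required by Assumption~\ref{a:Lyapunov}; I would follow the Baxendale--Dolgopyat--Kaloshin--Koralov scheme. First, apply the Harris theorem to the projective chain $\hat P_0$ itself, using the $\hat{\mathscr X}$ case of Assumption~\ref{a:checkableConditions2} to produce a small set, the Feller/irreducibility parts of Assumption~\ref{a:checkableConditions1}, and the fixed point of Assumption~\ref{a:checkableConditions3} for aperiodicity. Compactness of $S\T^d$ then yields a unique $\hat P_0$-stationary measure $\hat\mu$ and uniform geometric ergodicity. Assumption~\ref{a:checkableConditions4}, together with a Furstenberg-type positivity criterion for the $\mathrm{SL}_d(\R)$-cocycle $A_1^0$ over this geometrically ergodic base, then produces a strictly positive top Lyapunov exponent
\begin{equation*}
  \lambda_1 \defeq \int_{S\T^d} \E\bigl[\log\norm{A_1^0(x) v}\bigr]\, d\hat\mu(x, v) > 0\,.
\end{equation*}

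Next, I would construct $\psi$ as the leading eigenfunction of the twisted transfer operator
\begin{equation*}
  (\mathcal L_p \psi)(x, v) \defeq \E\bigl[ \norm{A_1^0(x) v}^{-p} \psi(\hat X_1(x, v)) \bigr].
\end{equation*}
Assumption~\ref{a:flow} gives uniform bounds on $\norm{A_1^0}$ and $\norm{(A_1^0)^{-1}}$, so $p \mapsto \mathcal L_p$ is real-analytic near $0$. At $p=0$, $\mathcal L_0 = \hat P_0$ has simple leading eigenvalue $1$ with constant eigenfunction, and a spectral gap inherited from the geometric ergodicity of $\hat P_0$. Analytic perturbation theory then yields, for all small $p > 0$, a continuous and strictly positive eigenfunction $\psi_p$ and a leading eigenvalue $\lambda(p) = 1 - p\lambda_1 + O(p^2) < 1$. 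Fixing such a $p$ and setting $\tilde\gamma = \lambda(p)$, $\psi = \psi_p$, a first-order Taylor expansion near the diagonal shows that if $v = \exp_x^{-1}(y)/d(x,y)$ then
\begin{equation*}
  d(X_1^0(x), X_1^0(y)) = d(x,y)\norm{A_1^0(x)v}\bigl(1 + O(d(x,y))\bigr),
\end{equation*}
with the normalized direction tracking $\hat X_1(x, v)$ to the same order. Inserting this into the definition~\eqref{e: LyapunovV} (extended arbitrarily, but bounded, off $\Delta(s_*)$) and taking expectations yields $P_0^{(2)} V = \tilde\gamma V + O(d(x,y)^{-p+1})$, from which~\eqref{e:LyapunovEpEq0} follows by choosing $s_*$ sufficiently small.

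The main obstacle is the perturbation step: Harris provides geometric ergodicity of $\hat P_0$ only in total variation, whereas analytic perturbation theory requires a genuine spectral gap on a Banach space on which $\mathcal L_p$ acts analytically and the resulting $\psi_p$ is continuous and bounded below. The standard workaround is to combine the $C^2$ regularity provided by Assumption~\ref{a:flow} with the strong Feller property coming from the submersion in Assumption~\ref{a:checkableConditions2} to upgrade the total-variation contraction to a contraction on a space of H\"older functions, which supplies the required functional-analytic framework.
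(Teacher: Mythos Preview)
Your proposal is correct and follows the same mathematical route as the paper. The verification of Assumptions~\ref{a:fts} and~\ref{a:submersion-2point} is essentially identical. For Assumption~\ref{a:Lyapunov}, the paper does not carry out the construction at all: it simply observes that the Gr\"onwall bound~\eqref{e:LipschitzTorus} holds and then invokes Propositions~3.3 and~4.5 of~\cite{BlumenthalCotiZelatiEA22} as black boxes for, respectively, positivity of the top Lyapunov exponent and the existence of the Lyapunov function~$V$ of the form~\eqref{e: LyapunovV}. What you have written is precisely a sketch of the contents of those two propositions---the Furstenberg positivity argument, the twisted transfer operator $\mathcal L_p$, the analytic perturbation off $\hat P_0$, and the linearization near the diagonal---so your ``main obstacle'' is real but is exactly what those cited results handle. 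In short: same proof, but you have unpacked the citation.
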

%The proof of Proposition~\ref{p:checkable-conditions} is contained in~\cite{BlumenthalCotiZelatiEA22}, however, for the readers convenience we elaborate on this in Section~\ref{s:checkable-cond-proof}, below.
%We conclude by showing that Assumption~\ref{e:flow} along with~\ref{a:checkableConditions1}--\ref{a:checkableConditions4} imply the Harris conditions, as stated in Proposition~\ref{p:checkable-conditions}.
\begin{proof}[Proof of Proposition~\ref{p:checkable-conditions}]
  The proof of Proposition~\ref{p:checkable-conditions} follows immediately from the results in~\cite{BlumenthalCotiZelatiEA22}.
  Assumption~\ref{a:checkableConditions1} along with continuity of~$\mathscr U$ (Assumption~\ref{a:flow}) and Assumption~\ref{a:checkableConditions3} implies the conditions in Assumption~\ref{a:fts}.
  Assumption~\ref{a:submersion-2point} follows immediately from Assumption~\ref{a:checkableConditions2}.
  To obtain the Lyapunov function (Assumption~\ref{a:Lyapunov}) we will apply Proposition~3.3 in~\cite{BlumenthalCotiZelatiEA22}.
  In order to do this we note that a standard Gronwall argument (see for instance Lemma~\ref{l:gronwall-difference}, below) and compactness imply that there exists a constant~$C_0'$ such that for all~$\xi \in \mathscr M$, $x, y \in \T^d$, we have
  \begin{equation}\label{e:LipschitzTorus}
    \frac{1}{C_0'}d(x,y)\leq d(\mathscr X_1(\xi, x), \mathscr X_1(\xi, y)) \leq C_0' d(x,y)
  \,.
  \end{equation}
  Along with Assumption~\ref{a:checkableConditions4}, this allows us to apply Proposition~3.3 in~\cite{BlumenthalCotiZelatiEA22}, and gives positivity of the top Lyapunov exponent.
  Now Proposition~4.5 in~\cite{BlumenthalCotiZelatiEA22} (and Assumptions~\ref{a:flow}--\ref{a:checkableConditions4})  will imply the existence of a Lyapunov function as stated in Assumption~\ref{a:Lyapunov}.
  \iffalse
  To prove~\eqref{e:LipschitzTorus}, Lemma~\ref{l:gronwall-difference} (for~$\kappa = 0$) implies that that for any $x, y \in \Delta(\frac{1}{4C_0})$ and $\xi \in \mathscr M$,
  \begin{equation}
    \frac{1}{C_0}d(x,y)\leq d(\mathscr X_1(\xi, x), \mathscr X_1(\xi, y)) \leq C_0d(x,y)
    \,.
  \end{equation}
  Here~$C_0$ is the same constant as in Lemma~\ref{l:gronwall-difference}.
Now, to extend this Lipschitz bound to the whole space $\T^d$, we find a finite cover of open balls $\bigcup_{i=1}^n B_i$ for $\T^d$, where $B_i \defeq (x_i,\frac{s_*}{4C_0})$. We make a graph such that each ball is represented as a vertex and there's an edge between the vertices $i$ and $j$ if and only if $B_i\cap B_j \neq \emptyset$. Then, this graph is undirected and connected as $\T^d$ is connected. We denote the diameter of this graph as $m$. Then, we see that for any $x, y \in \Delta(\frac{s_*}{2C_0})^c$, there exist two balls $B_i, B_j$ such that $x \in B_i$ and $y\in B_j$. We find a path $(i=i_0,i_1, \cdots, i_p=j)$ in the graph and choose some points $x_k \in B_{i_{k-1}}\cap B_{i_k}$ for each $1\leq k\leq p$ with $x_0=x$ and $x_{p+1}=y$. We notice that
\begin{align}
d(\mathscr X_1 (\xi, x), \mathscr X_1 (\xi, y)) &\leq \sum_{k=1}^{p+1}d(\mathscr X_1(\xi, x_{k-1}), \mathscr X_1(\xi,x_k))\\
&\leq C_0 \sum_{k=1}^{p+1} d(x_{k-1}, x_k)\\
&\leq C_0(p+1)\frac{s_*}{2C_0}\\
&\leq C_0(m+1)d(x,y)\,.
\end{align}
We can do similarly for the inverse map $(\mathscr X_1(\xi))^{-1}$ so this completes the proof.
  \fi
\end{proof}

\section{Proof of Theorem~\ref{t:EDIntro}}\label{s:main-proof}

As mentioned earlier, the main idea behind the proof of Theorems~\ref{t:EDIntro}, \ref{t:umix} is to show that Assumption~\ref{a:fts}--\ref{a:flow} imply that the Harris conditions hold for all sufficiently small~$\kappa > 0$.

\begin{lemma} \label{l:harrisassumptions}
Suppose Assumptions~\ref{a:fts}--\ref{a:flow} hold. Then there exist $l\in \N$, $\gamma_3\in (0,1)$, $K>0$, $R>\frac{2K}{1-\gamma_3}$, $\alpha \in (0,1)$, and a probability measure $\nu$, such that for all sufficiently small~$\kappa > 0$ we have
\begin{gather}
P_\kappa^{(2),l}V \leq \gamma_3 V + K\,, \label{e:Lyapunov-bound-multistep} \\
\label{e:V-sublevel-small}
\inf_{x\in \{V\leq R\}} P_\kappa^{(2),l}(x, \cdot) \geq \alpha\nu(\cdot)
\,.
\end{gather}
\end{lemma}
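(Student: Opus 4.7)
My plan is to establish the Lyapunov drift~\eqref{e:Lyapunov-bound-multistep} and the minorization~\eqref{e:V-sublevel-small} separately, and then combine them by choosing $l$ large enough. For the drift, it suffices to prove the one-step bound $P_\kappa^{(2)} V \leq \gamma V + K'$ for some $\gamma \in (0,1)$, $K' > 0$, uniform in small $\kappa$; iteration then yields~\eqref{e:Lyapunov-bound-multistep} with $\gamma_3 = \gamma^l$ and $K = K'/(1-\gamma)$, and the condition $R > 2K/(1-\gamma_3)$ is met by enlarging $R$.

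To prove the one-step drift, split $\T^{d,(2)}$ into a small punctured neighborhood $\Delta(s)$ of the diagonal (with $s \leq s_*$ to be chosen) and its complement. On the complement, $V$ is continuous on a compact set, hence bounded, so $P_\kappa^{(2)}V$ contributes only to $K'$. On $\Delta(s)$, I use the synchronous coupling between $X^\kappa_t(x)$ and $X^\kappa_t(y)$: since both are driven by the same Brownian motion, their difference $Y_t$ satisfies a deterministic ODE (pathwise in $W$). Linearizing at $d(x,y)=0$ gives $Y_1 = A_1^\kappa(x)\,v\,d(x,y) + O(d(x,y)^2)$, where $A^\kappa$ is the derivative cocycle along the stochastic trajectory $X^\kappa(x)$ and $v = \exp_x^{-1}(y)/d(x,y)$. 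Substituting into~\eqref{e: LyapunovV} and exploiting the power-law scaling in $d(x,y)$ yields
\begin{equation}
  \frac{\E V(X_1^{\kappa,(2)}(x,y))}{V(x,y)}
    \leq \E\Bigl[\norm{A_1^\kappa(x)\,v}^{-p}\,\frac{\psi(X_1^\kappa(x),\,A_1^\kappa(x)v/\norm{A_1^\kappa(x) v})}{\psi(x,v)}\Bigr] + C\,d(x,y).
\end{equation}
By Assumption~\ref{a:flow}, the SDE depends continuously on $\kappa$, so the expectation above converges uniformly over $(x,v) \in S\T^d$ to its $\kappa=0$ limit, which Assumption~\ref{a:Lyapunov} bounds by $\tilde\gamma$. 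Taking first $s$ small (absorbing the $Cd(x,y)$ remainder) and then $\kappa$ small gives the ratio $\leq (1+\tilde\gamma)/2 < 1$ on $\Delta(s)$.

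For the minorization, let $n, x_*, \xi_*$ be as in Assumption~\ref{a:submersion-2point}, and set $y_* = \mathscr X_n^{(2)}(\xi_*, x_*)$. The submersion property, combined with the positive density of $\underline{\omega_n}$ near $\xi_*$, yields $P_0^{(2),n}(x_*, \cdot) \geq c_0 \Leb|_U$ on some neighborhood $U$ of $y_*$ via the standard coarea/pushforward argument. Continuity of $\mathscr X_n^{(2)}$ in the initial condition and of the SDE solution in $\kappa$ extend this to $P_\kappa^{(2),n}(x,\cdot) \geq (c_0/2)\Leb|_U$ for $x$ in a neighborhood $\mathcal N$ of $x_*$ and all $\kappa \in [0,\kappa_0]$. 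Since $V\to\infty$ at the diagonal, the sublevel set $\{V\leq R\}$ is compact; Assumption~\ref{a:fts} together with Feller continuity then furnishes an integer $m$ and $\beta > 0$ so that $P_\kappa^{(2),m}(x, \mathcal N) \geq \beta$ for every $x\in\{V\leq R\}$ and every small $\kappa$. Composing, $l = m+n$ works with $\nu = \Leb|_U/\Leb(U)$ and $\alpha = \beta c_0 \Leb(U)/2$.

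The principal technical obstacle is the first step: controlling the ratio $(P_\kappa^{(2)}V)/V$ uniformly across the diagonal singularity of $V$. The key is to exploit the explicit scale-invariant structure~\eqref{e: LyapunovV} to reduce the comparison to a bounded integrand on the compact sphere bundle $S\T^d$, where continuity-in-$\kappa$ arguments apply routinely. A secondary difficulty is promoting qualitative continuity of $P_\kappa^{(2),n}$ in $\kappa$ to a uniform lower bound on its density near $y_*$; this relies on the smoothness of the SDE coefficients (Assumption~\ref{a:flow}) and compactness.
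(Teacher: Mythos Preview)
Your proposal is correct and shares the paper's architecture: a one-step drift $P_\kappa^{(2)}V\le\gamma V+K'$ uniform in small $\kappa$, iterated; a local small set from the submersion hypothesis, extended to the compact sublevel $\{V\le R\}$ via irreducibility and $\kappa$-continuity of the transition kernels. For the drift you take a slightly different route: you linearize $X_1^\kappa(x)-X_1^\kappa(y)$ at $d(x,y)=0$, reducing $(P_\kappa^{(2)}V)/V$ on $\Delta(s)$ to a bounded functional of the stochastic derivative cocycle $A_1^\kappa$ on the compact bundle $S\T^d$, and then send $\kappa\to 0$ there (the $\kappa=0$ limit being the $d(x,y)\to 0$ limit of~\eqref{e:LyapunovEpEq0}, hence $\le\tilde\gamma$). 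The paper instead keeps $(x,y)$ fixed and bounds $|P_\kappa^{(2)}V-P_0^{(2)}V|/V$ directly, via the ``four-term'' estimate on $X_1^\kappa(x)-X_1^\kappa(y)-(X_1^0(x)-X_1^0(y))$ in Lemma~\ref{l: fourtermsdiff}. Both exploit the homogeneity of $V$ in $d(x,y)$; yours is a touch more conceptual, theirs stays at the level of finite differences and never invokes the derivative cocycle. One wrinkle in your bookkeeping: you write that $R>2K/(1-\gamma_3)$ ``is met by enlarging $R$,'' but $R$ determines the sublevel set, hence $m$, hence $l=m+n$, hence $\gamma_3$ --- circular as stated. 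The paper's resolution is to use the exact iterated constant $K=K'(1-\gamma^l)/(1-\gamma)$, for which $2K/(1-\gamma_3)=2K'/(1-\gamma)$ is independent of $l$; thus $R>2K'/(1-\gamma)$ can be fixed at the outset, and $l$ chosen afterwards to make the minorization work.
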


Once Lemma~\ref{l:harrisassumptions} is proved, the remainder of the proof can be obtained using established methods (see for instance~\cite{BlumenthalCotiZelatiEA22,BedrossianBlumenthalEA21,DolgopyatKaloshinEA04}).
First, a quantitative version of the Harris theorem~\cite{HairerMattingly11,MeynTweedie09} combined with Lemma~\ref{l:harrisassumptions} will show that the two point process~$X^{\kappa, (2)}$ is $V$-geometrically ergodic.

\begin{lemma}[V-geometric ergodicity] \label{l: uge}
  Suppose that~\eqref{e:Lyapunov-bound-multistep} and ~\eqref{e:V-sublevel-small} hold.
  There exist constants $C>0$ and $\beta > 0$ such that for all sufficiently small~$\kappa \geq 0$, all measurable $\varphi:\T^{d, (2)}\to \R$ such that $\|\varphi\|_V < \infty$, and any $n\in \N$, we have
  \begin{equation}
    \norm[\Big]{ P_\kappa^{(2),n} \varphi-\int \varphi \, d\pi^{(2)} }_V
      \leq Ce^{-\beta n} \norm[\Big]{ \varphi-\int \varphi \, d\pi^{(2)}}_V\,. \label{e: ugeGeneral}
  \end{equation}
\end{lemma}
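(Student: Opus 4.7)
The proof is essentially a direct application of the quantitative Harris theorem of Hairer and Mattingly~\cite{HairerMattingly11} to the $l$-step kernel $Q_\kappa \defeq P_\kappa^{(2),l}$. The drift bound~\eqref{e:Lyapunov-bound-multistep} and minorization~\eqref{e:V-sublevel-small} supplied by Lemma~\ref{l:harrisassumptions}, together with the condition $R > 2K/(1-\gamma_3)$, are precisely the hypotheses of that theorem, and crucially every constant ($\gamma_3, K, R, \alpha$ and the measure $\nu$) is independent of $\kappa$ for all $\kappa$ sufficiently small. One therefore obtains a $\kappa$-independent $\eta > 0$ and contraction rate $\rho \in (0,1)$ such that, in the equivalent weighted sup-norm induced by $V_* \defeq 1 + \eta V$,
\begin{equation*}
  \norm{Q_\kappa \varphi - \pi^{(2)}(\varphi)}_{V_*} \leq \rho\, \norm{\varphi - \pi^{(2)}(\varphi)}_{V_*},
\end{equation*}
where $\pi^{(2)} = \pi^{(2)}_\kappa$ is the unique invariant probability measure for $Q_\kappa$. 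Uniqueness, applied to $P_\kappa^{(2)}$ whose $l$-th iterate is $Q_\kappa$, shows $\pi^{(2)}$ is also invariant for $P_\kappa^{(2)}$.

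To upgrade the $l$-step contraction to a rate at arbitrary times $n$, I would write $n = lm + r$ with $0 \le r < l$ and iterate: the $lm$ applications of $Q_\kappa$ contribute a factor $\rho^m$, while the remaining $r < l$ applications of $P_\kappa^{(2)}$ contribute at most a $\kappa$-independent multiplicative constant. The latter requires a crude single-step bound $\norm{P_\kappa^{(2)}}_{V \to V} \leq M$, which follows from the pointwise estimate $\abs{P_\kappa^{(2)}\varphi} \leq \norm{\varphi}_V\, P_\kappa^{(2)} V$ together with a drift of the form $P_\kappa^{(2)} V \leq c_1 V + c_2$. This in turn is a consequence of Assumption~\ref{a:flow}: Gronwall gives two-sided Lipschitz bounds on $\mathscr X_1(\xi,\cdot)$ uniform in $\xi$ (cf.~\eqref{e:LipschitzTorus}), and the explicit form~\eqref{e: LyapunovV} of $V$ near the diagonal then converts these into the required bound, with the additional diffusive displacement of order $\sqrt{\kappa}$ controlled by the same Gronwall-type estimate that will also be needed for Lemma~\ref{l:harrisassumptions} itself. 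Setting $\beta \defeq -(\log \rho)/l$ and absorbing the bounded factor from the leftover steps and from the norm equivalence $\norm{\cdot}_V \sim \norm{\cdot}_{V_*}$ into $C$ produces~\eqref{e: ugeGeneral}.

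The proof is largely bookkeeping once Lemma~\ref{l:harrisassumptions} is available; the only points requiring care are tracking the $\kappa$-uniformity of the constants output by Hairer--Mattingly (which is automatic from the $\kappa$-uniformity of the hypotheses) and establishing the single-step $V$-norm bound needed for the $r$ leftover steps. Neither is a genuine obstacle: the former is a matter of reading constants off the standard statement, and the latter is a routine Gronwall estimate. Accordingly, all the real content of Lemma~\ref{l: uge} sits in the preceding Lemma~\ref{l:harrisassumptions}, whose proof is where I would expect the main difficulty of the overall argument to lie.
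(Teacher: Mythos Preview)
Your proposal is correct and follows essentially the same approach as the paper: apply the Hairer--Mattingly contraction to the $l$-step kernel in the equivalent weighted norm $1+\eta V$, then write $n=lm+r$ and absorb the $r<l$ leftover steps using a one-step $V$-norm bound. The only cosmetic difference is that the paper obtains the one-step bound $\norm{P_\kappa^{(2)}g}_V\le(\gamma_1+b)\norm{g}_V$ directly from the already-established drift~\eqref{e:LyapunovKappa} of Lemma~\ref{l:LyapunovKappa}, rather than re-deriving it from Gr\"onwall as you suggest; since that lemma is exactly the Gr\"onwall argument you sketch, this amounts to the same thing.
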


Combining this with Borel-Cantelli argument in~\cite{KaloshinDolgopyatEA05} (see also~\cite{BedrossianBlumenthalEA21,BlumenthalCotiZelatiEA22}) will show almost sure exponential mixing of~$X^\kappa$.

\begin{lemma}\label{l:X-exp-mix}
  Suppose that~\eqref{e: ugeGeneral} holds.
  Then, for every~$\alpha > 0$ and $0<q<\infty$, there exists a random $D_\kappa \geq 1$ (which depends on~$u$ but is independent of~$W$), and deterministic~$\gamma > 0$ (independent of~$\kappa$) such that
  for all sufficiently small $\kappa \geq 0$,
  %almost every realization of~$W$,
  every pair of
  %the discrete time stochastic flows~$X^\kappa_{m,n}$ are exponentially mixing with rate~$D_\kappa e^{-\gamma \cdot}$.
  %That is, for every pair of
  mean-zero test functions $f, g \in \dot H^{\alpha}$, and every $n \in \N$ we have
  \begin{equation}\label{e:X-exp-mix-disc-time}
     \ip{f, g\circ X^\kappa_{n}}
      \leq D_\kappa e^{-\gamma n} \norm{f}_{H^\alpha} \norm{g}_{H^\alpha} \,,
      \quad\text{almost surely}\,.
  \end{equation}
  Moreover, there exists a finite constant $\bar D_q$ (independent of~$\kappa$) such that
  \begin{equation}
    \E D_\kappa^q \leq \bar D_q\,,
  \end{equation}
  for all sufficiently small~$\kappa \geq 0$.
\end{lemma}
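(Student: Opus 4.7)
The plan is to follow the approach of~\cite{DolgopyatKaloshinEA04}, upgrading the $V$-geometric ergodicity of the two-point chain from Lemma~\ref{l: uge} to an almost sure mixing bound through a Chebyshev plus Borel--Cantelli argument on pairs of Fourier modes, and then passing to arbitrary Sobolev test functions. The direct Fourier extension handles $\alpha > d/2$; for $\alpha \le d/2$, which I expect to be the main obstacle, I would reduce to the easy case via Littlewood--Paley truncation, using that the stochastic flow is pathwise volume-preserving because $\dv u = 0$ and the noise is additive, so $\det DX^\kappa_n \equiv 1$.

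Concretely, for $k, l \in \Z^d \setminus \{0\}$, I would write
\begin{equation*}
  \E|\ip{e_k, e_l \circ X^\kappa_n}|^2 = \iint e_k(x) \overline{e_k(y)} \bigl(P^{(2),n}_\kappa \varphi_l\bigr)(x, y) \, dx \, dy,
\end{equation*}
with $\varphi_l(x, y) \defeq \overline{e_l(x)} e_l(y)$. Since $\int \varphi_l \, d\pi^{(2)} = 0$ and $\norm{\varphi_l}_V \le 1$, Lemma~\ref{l: uge} gives $|P^{(2),n}_\kappa \varphi_l| \le C e^{-\beta n} V$; as $V \lesssim d(x, y)^{-p}$ with $p < 1 \le d$ is integrable on $\T^{d,(2)}$, this yields $\E|\ip{e_k, e_l \circ X^\kappa_n}|^2 \le C e^{-\beta n}$ uniformly in $(k, l)$. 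Interpolating with the pathwise bound $|\ip{\cdot, \cdot}| \le 1$ extends this to every moment $r \ge 2$, and a Chebyshev estimate at level $r$ then controls
\begin{equation*}
  \P\bigl(|\ip{e_k, e_l \circ X^\kappa_n}| > e^{-\gamma n}(1+|k|)^a(1+|l|)^a\bigr) \le C e^{-(\beta - r\gamma)n}(1+|k|)^{-ar}(1+|l|)^{-ar},
\end{equation*}
which is summable over $(n, k, l)$ when $\gamma < \beta/r$ and $ar > d$. Borel--Cantelli then makes
\begin{equation*}
  D_\kappa \defeq 1 + \sup_{n \ge 0,\, k, l \in \Z^d \setminus \{0\}} \frac{|\ip{e_k, e_l \circ X^\kappa_n}|}{e^{-\gamma n}(1+|k|)^a(1+|l|)^a}
\end{equation*}
almost surely finite, and dominating the supremum by the corresponding sum gives $\E D_\kappa^q \le \bar D_q$ for any prescribed $q$ after choosing $r \ge q$ large enough.

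For $f, g \in \dot H^\alpha$ with $\alpha > d/2$, Fourier expansion together with the weighted Cauchy--Schwarz bound $\sum_k |\hat f(k)|(1+|k|)^a \le \norm{f}_{H^\alpha}(\sum_k (1+|k|)^{2(a-\alpha)})^{1/2}$ (finite for $a < \alpha - d/2$), combined with the Borel--Cantelli constraint $a > d/r$, produces the claim once $r$ is chosen large enough. The main difficulty is the range $\alpha \le d/2$, where these Fourier weights are no longer summable; there I would Littlewood--Paley truncate $f = f_\epsilon + (f - f_\epsilon)$ (and similarly $g$) at frequency $1/\epsilon$, so that $\norm{f_\epsilon}_{H^{d/2 + \epsilon_0}} \lesssim \norm{f}_{H^\alpha} \epsilon^{-(d/2 + \epsilon_0 - \alpha)}$ and $\norm{f - f_\epsilon}_{L^2} \lesssim \norm{f}_{H^\alpha} \epsilon^\alpha$. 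Splitting
\begin{equation*}
  \ip{f, g \circ X^\kappa_n} = \ip{f_\epsilon, g_\epsilon \circ X^\kappa_n} + \ip{f - f_\epsilon, g \circ X^\kappa_n} + \ip{f_\epsilon, (g - g_\epsilon) \circ X^\kappa_n},
\end{equation*}
I would apply the already-proved bound to the main term and control the residuals by $\norm{f - f_\epsilon}_{L^2}\norm{g}_{L^2}$ and $\norm{f_\epsilon}_{L^2}\norm{g - g_\epsilon}_{L^2}$ using the pathwise identity $\norm{g \circ X^\kappa_n}_{L^2} = \norm{g}_{L^2}$. Optimizing over $\epsilon$ yields an exponential bound at rate $\tilde \gamma = \gamma\alpha/(d + 2\epsilon_0 - \alpha) > 0$ with random constant $\tilde D_\kappa = D_\kappa^{\alpha/(d + 2\epsilon_0 - \alpha)}$, whose $q$-th moment is still controlled by $\bar D_q$ via Jensen since $\alpha/(d + 2\epsilon_0 - \alpha) \le 1$ for $\alpha \le d/2$.
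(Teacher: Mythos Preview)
Your proposal is correct and follows essentially the same strategy as the paper: bound the second moment of Fourier-mode correlations via the $V$-geometric ergodicity of the two-point chain, then run Chebyshev and Borel--Cantelli over $(n,k,l)$ to produce the random constant $D_\kappa$ with controlled moments. The paper packages the supremum slightly differently (through auxiliary random variables $N_{m,m'}^\kappa$ and $K_\kappa$, and a fixed weight $|m||m'|$ rather than your variable $(1+|k|)^a(1+|l|)^a$) and outsources the passage from $H^{d/2+2}$ to general $H^\alpha$ to Lemma~7.1 and Section~7.3 of~\cite{BedrossianBlumenthalEA22}; your Littlewood--Paley splitting combined with the pathwise isometry $\|g\circ X^\kappa_n\|_{L^2}=\|g\|_{L^2}$ from volume preservation is a clean self-contained version of that same interpolation.
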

\begin{remark}
  By duality, \eqref{e:X-exp-mix-disc-time} is equivalent to
  \begin{equation}\label{e:H-1-decay}
    \tag{\ref*{e:X-exp-mix-disc-time}$'$}
    \norm{g \circ X^\kappa_n }_{H^{-\alpha}} \leq D_\kappa e^{-\gamma t} \norm{g}_{H^\alpha} \,,
  \end{equation}
  for every mean-zero test function $g \in H^{-\alpha}$.
\end{remark}

\iffalse\begin{remark}
  If $u$ is regular enough (uniformly in time), then the exponential mixing bound~\eqref{e:X-exp-mix-disc-time} at discrete times implies the same bound (with a slightly larger constant $D_\kappa$) for all times.
\end{remark}

A similar result to Lemma~\ref{l:X-exp-mix} appeared in~\cite{BedrossianBlumenthalEA21} when~$u$ satisfies the stochastic Navier--Stokes equations.
In~\cite{BedrossianBlumenthalEA21}, the authors used the structure of the equations to produce $\kappa$-dependent Lyapunov functions which are uniformly controlled as $\kappa \to 0$, and then used this to prove exponential mixing.
%This proof relies on the strong Feller assumption, which is not available in our situation.
We instead use Assumption~\ref{a:fts}--\ref{a:flow} to show that there exists a $\kappa$-independent Lyapunov function and small set (Lemma~\ref{l: uge}, below), and then use this to prove Lemma~\ref{l:X-exp-mix}.
\fi

%As mentioned earlier, if we only assume~$u$ is exponentially mixing, we do not know whether or not the stochastic flow~$X^\kappa$ is exponentially mixing.
%This is an interesting open question which is stronger version of Proposition~\ref{p:X-exp-mix}.

Momentarily postponing the proof of Lemma~\ref{l:X-exp-mix}, we prove Theorems~\ref{t:EDIntro} and~\ref{t:umix}.

\begin{proof}[Proof of Theorem~\ref{t:EDIntro}]
  %For simplicity, and without loss of generality, we will assume $s = 0$.
  For notational convenience, we will use $\rho_t$ to denote $\rho(\cdot, t)$, the slice of~$\rho$ at time~$t$.
  Fix $\alpha > 0$, and let $n \in \N$ be a large time.
  Assumption~\ref{a:flow} and parabolic regularity implies
  \GI[2024-01-01]{TODO: Find a reference}
  \begin{equation}\label{e:H1L1}
    \norm{\rho_{n} - \bar \rho}_{L^\infty} \leq \frac{C}{\kappa^{\frac{2\alpha + d}{4}}} \norm{\rho_{n-1} - \bar \rho}_{H^{-\alpha}} \,.
  \end{equation}
  To bound the right hand side, we note that the Kolmogorov backward equation implies that for any~$t \geq 1$ we have
  \begin{equation}
    \rho_{t+1}(x) = \E_W \rho_1 \circ X^{\kappa}_{1, 1+t}(x)\,.
  \end{equation}
  Here $\E_W$ denotes the expectation with respect to the $\P_W$ marginal of the product measure~$\P = \P_W \otimes \P_0$, and $X^\kappa_{1, \cdot}$ is the solution of the SDE
  \begin{equation}\label{e:SDEX1}
    dX^{\kappa}_{1, t}(x) = - u(X^{\kappa}_{1, t}(x), t) \, dt + \sqrt{2\kappa} \, dW_t\,,
    \quad
    X^{\kappa}_{1, 1}(x) = x \,.
  \end{equation}

  Since the distribution of~$u$ is time homogeneous, we may apply Lemma~\ref{l:X-exp-mix} to~$X^\kappa_{1, \cdot}$.
  Thus, using~\eqref{e:H-1-decay} yields
  \begin{align}
    \norm{\rho_{n-1} - \bar \rho}_{H^{-\alpha}}
      &= \norm{\E_W \rho_{1} \circ X^\kappa_{1, n-1}  - \bar \rho}_{H^{-\alpha}}
      \leq \E_W \norm{\rho_{1} \circ X^\kappa_{1, n-1} - \bar \rho}_{H^{-\alpha}}
    \\
      &\leq D_\kappa e^{-\gamma (n-1)}
	\norm{\rho_{1} - \bar \rho}_{H^\alpha}\,.
  \end{align}
  Here we used the fact that since $D_\kappa$ is independent of~$W$ which implies $D_\kappa = \E_W D_\kappa$.
  Finally, we note that parabolic regularity implies
  \begin{equation}
    \norm{\rho_1 - \bar \rho}_{H^\alpha}
      \leq \frac{C}{\kappa^{ \frac{2\alpha + d}{4}}} \norm{\rho_0 - \bar \rho}_{L^1}\,.
  \end{equation}

  Combining the above, we note
  \begin{equation}\label{e:rho-3t-decay}
    \norm{\rho_{n} - \bar \rho}_{L^\infty}
      \leq \frac{C e^{-\gamma (n-1)} D_\kappa}{\kappa^{\alpha + d/2}} \norm{\rho_{0} - \bar \rho}_{L^\infty} \,,
  \end{equation}
  for all integer times~$n$.
  Since the $L^\infty$ norm is non-increasing, we can increase $C$ by a factor of $e^{\gamma}$ and ensure~\eqref{e:rho-3t-decay} holds for all $t \geq 0$, concluding the proof.
\end{proof}

Theorem~\ref{t:umix} is equivalent to Theorem~\ref{t:EDIntro} by a standard duality argument.
\begin{proof}[Proof of Theorem~\ref{t:umix}]
  Let~$\rho$ be a solution to~\eqref{e:ad} with initial data~$\rho_0$.
  By the Kolmogorov backward equation
  \begin{equation}
    \rho_t(x) = \E_W \rho_0 \circ X^\kappa_t(x)
      = \int_{\T^d} p^\kappa_t( x, y) \rho_0(y) \, dy
      \,.
  \end{equation}
  Thus
  \begin{equation}
    \sup_{x \in \T^d} \int_{\T^d}
      (p^\kappa_t(x, y) - 1) \rho_0(y) \, dy \, dx
      % \leq \sup_{x \in \T^d} \abs[\Big]{
      %     \int_{\T^d} (p^\kappa_t(x, y) - 1) \rho_0(y) \, dy
      %     }
      \leq \norm{\rho_t - \bar \rho}_{L^\infty}
      \leq \frac{D_\kappa}{\kappa^{\frac{d}{2} + \alpha}} e^{-\gamma t} \norm{\rho_0 - \bar \rho}_{L^1}
  \end{equation}
  Since~$\rho_0 \in L^1$ is arbitrary, this implies
  \begin{equation}
    \sup_{x \in \T^d} \norm{ p^\kappa_t( x, \cdot ) - 1}_{L^\infty}
      \leq \frac{D_\kappa}{\kappa^{\frac{d}{2} + \alpha}} e^{-\gamma t}
    \,,
  \end{equation}
  which immediately yields~\eqref{e:tmix-bound} as desired.
\end{proof}

The proofs of Lemmas~\ref{l: uge} and~\ref{l:X-exp-mix} follow quickly from existing results, and the bulk of the remainder of the paper is devoted to proving Lemma~\ref{l:harrisassumptions}.
The proof can naturally be split into two parts -- showing~$V$ is a Lyapunov function, and producing a~$\kappa$-independent small set.
We do each of these parts in Section~\ref{s:Lyapunov} and~\ref{s:small-set} respectively, and then prove Lemma~\ref{l:harrisassumptions} in Section~\ref{s:verify-harris-assumptions}.

\section{The existence of a \texorpdfstring{$\kappa$-}{k-}independent Lyapunov function.}\label{s:Lyapunov}

The goal of this section is to produce the Lyapunov function~$V$ used in Lemma~\ref{l:harrisassumptions} in Section~\ref{s:verify-harris-assumptions}.
For this we use the function~$V$ in Assumptions~\ref{a:Lyapunov} and the fact that~$u \in C^2$ (Assumption~\ref{a:flow}) to show that for sufficiently small~$\kappa$, the function~$V$ is still a Lyapunov function for~$P_\kappa^{(2)}$.
%We state this goal in the following proposition.
%stochastic flows~$X^\kappa$ in~\eqref{e:SDEX} have a $\kappa$-independent Lyapunov function $V$.
%Once this is established, a standard argument of Harris~\cite{MeynTweedie09,Harris55,HairerMattingly11} will show that the flows~$X^\kappa$ are $V$-geometrically ergodic, and a Borel-Cantelli argument of~\cite{KaloshinDolgopyatEA05} (see also~\cite{BedrossianBlumenthalEA22,BlumenthalCotiZelatiEA22}) will show exponential mixing.
%We present the Haris theorem in Section~\ref{s:uge}, and then the Borel-Cantelli argument in Section~\ref{s:bc}.
%We now state the existence of a $\kappa$-independent Lyapunov function.

\begin{lemma}[Existence of a $\kappa$-independent Lyapunov function]\label{l:LyapunovKappa}
  Suppose Assumptions~\ref{a:Lyapunov} and~\ref{a:flow} hold.
  Then there exists $\gamma_1\in (0,1)$, $b>0$ and $V:\T^{d, (2)}\to [1,\infty)$, all independent of $\kappa$, such that
  \begin{equation}\label{e:LyapunovKappa}
    P^{(2)}_\kappa V \leq \gamma_1 V + b
  \end{equation}
  holds for all sufficiently small~$\kappa \geq 0$.
\end{lemma}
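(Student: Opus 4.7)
The plan is to exhibit the very same $V$ from Assumption~\ref{a:Lyapunov} as a Lyapunov function for $P_\kappa^{(2)}$ with only a slightly relaxed constant. The observation driving the whole argument is that the displacement $Z_t := X^\kappa_t(y) - X^\kappa_t(x)$ (read in local coordinates, valid whenever the two trajectories stay within the injectivity radius $1/2$) satisfies a first-order ODE with \emph{no} Brownian term, since the noise $\sqrt{2\kappa}\,dW_t$ drives both trajectories identically and cancels in the difference.

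First, I would establish two Gronwall estimates. With $C_0 := e^{\norm{\nabla_x u}_\infty}$ (finite and $\omega$-uniform by Assumption~\ref{a:flow}), the displacement ODE gives the two-sided Lipschitz bound
\begin{equation*}
C_0^{-1}\, d(x,y) \leq d(X_1^\kappa(x), X_1^\kappa(y)) \leq C_0\, d(x,y)
\end{equation*}
uniformly in $\kappa \geq 0$ and in the realization of $u$. Comparing $X^\kappa$ with $X^0$ along the same $W$ and using the $C^2$-bound on $u$, a standard Gronwall argument produces
\begin{equation*}
\abs{X_1^\kappa(x) - X_1^0(x)} \leq C_1 \sqrt{\kappa}(1+\norm{W}_\infty)\,, \quad \abs{Z_1^\kappa - Z_1^0} \leq C_1 \sqrt{\kappa}(1+\norm{W}_\infty)\, d(x,y)\,.
\end{equation*}
The crucial feature is that the displacement error scales linearly with $d(x,y)$, so that $\abs{Z_1^\kappa}/\abs{Z_1^0} = 1 + O\bigl(\sqrt{\kappa}(1+\norm{W}_\infty)\bigr)$ \emph{uniformly in the starting distance}.

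Next, I would split $\T^{d,(2)}$ at the scale $s_{**} := s_*/(2C_0)$. On $\Delta(s_{**})$, both $(X_1^0(x),X_1^0(y))$ and $(X_1^\kappa(x),X_1^\kappa(y))$ lie in $\Delta(s_*)$, so the explicit formula~\eqref{e: LyapunovV} applies to both, giving
\begin{equation*}
\frac{V(X_1^\kappa(x), X_1^\kappa(y))}{V(X_1^0(x), X_1^0(y))} = \paren[\Big]{\frac{\abs{Z_1^0}}{\abs{Z_1^\kappa}}}^{\!p} \cdot \frac{\psi\bigl(X_1^\kappa(x),\, Z_1^\kappa/\abs{Z_1^\kappa}\bigr)}{\psi\bigl(X_1^0(x),\, Z_1^0/\abs{Z_1^0}\bigr)}\,.
\end{equation*}
The ratio estimate above, together with the continuity and uniform positive lower bound of $\psi$ on the compact bundle $S\T^d$, dominates the right-hand side by $1 + \eta(\kappa,W)$ where $\E_W \eta(\kappa,W) \to 0$ as $\kappa \to 0$. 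Independence of $\omega_1$ and $W$ and the $\kappa=0$ drift inequality~\eqref{e:LyapunovEpEq0} then give $P_\kappa^{(2)} V \leq (1+\E_W\eta)\,\tilde\gamma\, V \leq \gamma_1 V$ on $\Delta(s_{**})$, with $\gamma_1 := (1+\tilde\gamma)/2$, provided $\kappa$ is small enough.

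On the compact complement $\{(x,y) : d(x,y) \geq s_{**}\}$, continuity of $V$ gives a uniform bound $V \leq M$, and the lower Gronwall estimate ensures $d(X_1^\kappa(x), X_1^\kappa(y)) \geq s_{**}/C_0$ almost surely, so $V(X_1^\kappa(x), X_1^\kappa(y)) \leq M' := \sup_{d \geq s_{**}/C_0} V < \infty$. Taking $b := M'$ yields $P_\kappa^{(2)} V \leq \gamma_1 V + b$ globally, which is~\eqref{e:LyapunovKappa}. The main technical obstacle is precisely the pointwise comparison of $V$ at the two random pairs: absent the noise cancellation in $Z$, any additive displacement error of order $\sqrt{\kappa}$ would yield a multiplicative factor of order $1 + \sqrt{\kappa}/d(x,y)$ on $V$, which blows up at the diagonal and would destroy the drift inequality. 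Making the cancellation manifest, and upgrading it to a pointwise control of the $\psi$-factor using only its continuity and positive lower bound, is where the actual work lies.
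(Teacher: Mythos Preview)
Your proposal is correct and follows essentially the same strategy as the paper's own proof: compare $V(X_1^\kappa(x),X_1^\kappa(y))$ with $V(X_1^0(x),X_1^0(y))$ via Gronwall-type displacement estimates near the diagonal, and invoke the two-sided Lipschitz bound away from it. The one notable difference is that your second-difference estimate $|Z_1^\kappa-Z_1^0|\le C\sqrt{\kappa}(1+\|W\|_\infty)\,d(x,y)$ is actually sharper than the paper's Lemma~\ref{l: fourtermsdiff} (whose bound carries an extra additive $|x-y|$ term from a cruder mean-value step, forcing an additional shrinking of the neighborhood $\Delta(\epsilon)$), and your factorization $\E[(1+\eta)V(X_1^0)]=\E_W[1+\eta]\,\E_\omega[V(X_1^0)]$ via the independence of $\omega$ and $W$ is a clean repackaging of the paper's explicit split on the event $\{\check W_1^*\le\alpha\}$ and its complement.
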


The main idea behind the proof of Lemma~\ref{l:LyapunovKappa} is that the difference of four terms $X_1^\kappa(x), X_1^\kappa(y), X_1^0(x), X_1^0(y)$ can be estimated small when $\kappa$ is close to $0$ and $x$ and $y$ are close. To carry out the details, we need to first lift all the processes to~$\R^d$.
We identify the torus $\T^d$ with the set of equivalence classes $\set{ [\check x] \st \check x \in \R^d }$,
where $[\check x]$ denotes the equivalence class of $\check x$ modulo~$\Z^d$.
Notice that for any $\check x, \check y \in \R^d$ such that $\abs{\check x - \check y}<1/2$, we have
\begin{equation}
  d(x, y) = \abs{\check x - \check y}
  \quad\text{and}\quad
  \exp_x^{-1}y = \check y - \check x
  \,,
  \qquad\text{where}\quad
  x = [\check x],~ y= [\check y]
  \,.
\end{equation}
We will implicitly identify periodic functions on~$\R^d$ with functions on the torus.

We also define the Brownian motion~$W$ on $\T^d$ by choosing a standard Brownian motion~$\check W$ on $\R^d$, and setting
\begin{equation}
  W_t = \check W_t \pmod{\Z^d}
  \,.
\end{equation}

Now, let~$\check X$ be the solution to~\eqref{e:SDEX} on~$\R^d$ with the Brownian motion~$\check W$, and notice $X_t = [\check X_t]$.
To prove Lemma~\ref{l:LyapunovKappa} we need two elementary estimates on the flows~$\check X^\kappa$, which we state below.

\begin{lemma}\label{l:gronwall-difference}
  There exists a constant $C_0 = C_0( \sup_{\xi, t} \norm{\mathscr U}_{C^1} )\geq 1$ such that for every $\kappa \geq 0$ and $\check x, \check y \in \R^d$, we have
  \begin{equation} \label{e: grownwall-bound}
    \frac{\abs{\check x - \check y}}{C_0}
      \leq \abs{\check X^\kappa_1(\check x) - \check X^\kappa_1(\check y)  }
      \leq C_0 \abs{\check x - \check y}
    \,,
    \quad\text{almost surely}
    \,.
  \end{equation}
\end{lemma}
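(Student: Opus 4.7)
The key observation is that since the SDE~\eqref{e:SDEX} has additive noise, when we take the difference of two solutions driven by the \emph{same} Brownian motion, the stochastic integrals cancel exactly. Thus the difference is governed by a pathwise ODE to which ordinary Gr\"onwall applies.

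Concretely, fix $\check x, \check y \in \R^d$ and set
\begin{equation*}
  Z_t \defeq \check X^\kappa_t(\check x) - \check X^\kappa_t(\check y)\,.
\end{equation*}
Subtracting the two SDEs cancels the $\sqrt{2\kappa} \, d\check W_t$ terms, so
\begin{equation*}
  dZ_t = -\bigl[ u(\check X^\kappa_t(\check x), t) - u(\check X^\kappa_t(\check y), t) \bigr] dt\,,
  \qquad Z_0 = \check x - \check y\,,
\end{equation*}
almost surely. Assumption~\ref{a:flow} provides a deterministic constant
\begin{equation*}
  L \defeq \sup_{(\xi,t) \in \mathscr M \times [0,1]} \norm{\nabla_x \mathscr U(\xi, \cdot, t)}_{L^\infty}
  < \infty\,,
\end{equation*}
so by periodicity the lifted vector field on $\R^d$ is $L$-Lipschitz in space uniformly in $t$ and in the noise realization. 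Hence, almost surely,
\begin{equation*}
  \tfrac{d}{dt} \abs{Z_t}^2
    = -2 Z_t \cdot \bigl[u(\check X^\kappa_t(\check x), t) - u(\check X^\kappa_t(\check y), t)\bigr]
    \geq -2L \abs{Z_t}^2\,,
\end{equation*}
and the analogous upper bound $\tfrac{d}{dt}\abs{Z_t}^2 \leq 2L \abs{Z_t}^2$ also holds.

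Applying Gr\"onwall's inequality on $[0,1]$ gives
\begin{equation*}
  e^{-L} \abs{\check x - \check y} \leq \abs{Z_1} \leq e^{L} \abs{\check x - \check y}\,,
\end{equation*}
which is exactly~\eqref{e: grownwall-bound} with $C_0 \defeq e^{L}$. No step in this argument is subtle: the essential point is the pathwise cancellation of the noise, which makes $\kappa$ drop out entirely and reduces the bound to the deterministic ODE Gr\"onwall estimate, uniformly over all realizations of~$u$ and~$W$.
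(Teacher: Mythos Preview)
Your proof is correct and follows essentially the same approach as the paper: both exploit that the additive noise cancels in the difference $Z_t$, reducing the problem to a deterministic Gr\"onwall estimate with the same constant $C_0 = e^{L}$ (the paper writes $A_1$ for your~$L$). The only cosmetic difference is that you differentiate $\abs{Z_t}^2$ to obtain the upper and lower bounds simultaneously, whereas the paper works with the integral form of~$\abs{Z_t}$ for the upper bound and then reverses time (running the equation from $t=1$ back to $t=0$) to obtain the lower bound; your route is marginally slicker but the content is identical.
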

\begin{lemma}\label{l: fourtermsdiff}
  For each $\check x, \check y \in \R^d$ and $t\in [0,1]$, define $\varrho_t^\kappa(\check x, \check y)$ by
  \begin{equation}
    \varrho^\kappa_t(\check x, \check y) \defeq
      \abs[\big]{\check X_t^\kappa(\check x) - \check X_t^\kappa(\check y) - (\check X_t^0(\check x)- \check X_t^0(\check y))}
    \,.
  \end{equation}
  There exists a constant $C_1  = C_1( \sup_{\xi, t} \norm{\mathscr U}_{C^2} )$  such that for any $\alpha>0$ and $\kappa > 0$, we have
  %on the event that $\set{W_1^* \leq \alpha}$,
  \begin{equation}\label{e: fourtermsdiff}
    \one_{ \set{\check W_1^* \leq \alpha } } \varrho_1^\kappa(\check x, \check y)
    \leq C_1(\alpha \sqrt{\kappa} + \abs{\check x- \check y})\abs[\big]{\check X_1^0(\check x)- \check X_1^0(\check y)}
    \,.
  \end{equation}
  Here $\check W^*$ is the running maximum of $\abs{\check W}$, defined by
  \begin{equation}
    \check W^*_t = \max_{s \leq t} \abs{\check W_s}\,.
  \end{equation}
\end{lemma}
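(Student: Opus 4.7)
The plan is to use the synchronous coupling between $\check X^\kappa$ and $\check X^0$: both processes are driven by the same Brownian motion $\check W$, so the diffusion terms will cancel in key differences. First, set $\check N^\kappa_t(z) \defeq \check X^\kappa_t(z) - \check X^0_t(z)$ and subtract the ODE for $\check X^0$ from the SDE for $\check X^\kappa$ to obtain
\begin{equation*}
  \check N^\kappa_t(z) = \sqrt{2\kappa}\,\check W_t - \int_0^t \bigl[u(\check X^\kappa_s(z),s) - u(\check X^0_s(z),s)\bigr]\,ds.
\end{equation*}
A one-line Gronwall argument then yields, on the event $\{\check W_1^* \leq \alpha\}$, the uniform bound $\sup_{s\in[0,1]}|\check N^\kappa_s(z)| \leq C\sqrt{\kappa}\,\alpha$, with $C$ depending only on $\|\mathscr U\|_{C^1}$.

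Next I would study the ``difference of differences'' $D_t \defeq \check N^\kappa_t(\check x) - \check N^\kappa_t(\check y)$, which is exactly $\pm\varrho^\kappa_t(\check x,\check y)$. Because the stochastic integral enters $\check N^\kappa_t(\check x)$ and $\check N^\kappa_t(\check y)$ identically, it drops out of $D_t$, leaving the purely integral identity
\begin{equation*}
  D_t = -\int_0^t \bigl\{[u(\check X^\kappa_s(\check x),s) - u(\check X^\kappa_s(\check y),s)] - [u(\check X^0_s(\check x),s) - u(\check X^0_s(\check y),s)]\bigr\}\,ds.
\end{equation*}
Writing each bracket as $G^\bullet_s Z^\bullet_s$, where $Z^\bullet_s$ is the corresponding position difference and $G^\bullet_s \defeq \int_0^1 \nabla u((1{-}r)\check X^\bullet_s(\check y) + r\check X^\bullet_s(\check x), s)\,dr$ is the averaged gradient of $u$, I rearrange the integrand as $G^\kappa_s D_s + (G^\kappa_s - G^0_s) Z^0_s$. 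The first piece contributes at most $\|\mathscr U\|_{C^1}|D_s|$, and the second piece is bounded by $\|\mathscr U\|_{C^2}\bigl(|\check N^\kappa_s(\check x)| + |\check N^\kappa_s(\check y)|\bigr)|Z^0_s|$ via the mean value theorem applied to $\nabla u$, using the $C^2$ bound from Assumption~\ref{a:flow}.

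Finally, I would invoke a variant of Lemma~\ref{l:gronwall-difference} run on the \emph{deterministic} flow, both forwards and backwards in time, to conclude $|Z^0_s| \leq C\bigl|\check X^0_1(\check x) - \check X^0_1(\check y)\bigr|$ for every $s\in[0,1]$. Combined with the Step~1 bound on $|\check N^\kappa_s|$, this yields, on the event $\{\check W_1^* \leq \alpha\}$, the differential inequality
\begin{equation*}
  |D_s| \leq \|\mathscr U\|_{C^1}\int_0^s |D_r|\,dr + C\sqrt{\kappa}\,\alpha\,\bigl|\check X^0_1(\check x) - \check X^0_1(\check y)\bigr|,
\end{equation*}
and one more application of Gronwall's inequality gives \eqref{e: fourtermsdiff}; in fact it gives a sharper bound with only the $\alpha\sqrt{\kappa}$ summand, so the $|\check x - \check y|$ term in the stated inequality is slack. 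The only step requiring any care is the backward-in-time Gronwall estimate $|Z^0_s| \lesssim |\check X^0_1(\check x) - \check X^0_1(\check y)|$, which goes through because $\mathscr U$ is $C^1$ and uniformly bounded by Assumption~\ref{a:flow}; otherwise everything is a routine consequence of synchronous coupling and the $C^2$ regularity of $u$.
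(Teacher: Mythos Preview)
Your argument is correct and follows the same Gr\"onwall-based strategy as the paper: bound $|\check X^\kappa_s(z)-\check X^0_s(z)|$ by $C\alpha\sqrt\kappa$ on the event, write an integral inequality for the ``difference of differences,'' and close with Gr\"onwall and the backward estimate $|Z^0_s|\leq C|\check X^0_1(\check x)-\check X^0_1(\check y)|$.

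The one genuine difference is your use of the \emph{integral} mean value theorem, writing $u(a)-u(b)=G(a-b)$ with $G=\int_0^1\nabla u((1-r)b+ra)\,dr$. The paper instead invokes the pointwise MVT, producing intermediate points $\beta_1(s)$ and $\beta_2(s)$ on the segments $[\check X^\kappa_s(\check x),\check X^\kappa_s(\check y)]$ and $[\check X^0_s(\check x),\check X^0_s(\check y)]$; since these points are uncontrolled within their segments, the paper bounds $|\beta_1-\beta_2|$ by the diameter of the union of the two segments, which picks up the cross terms $|\check X^\kappa_s(\check y)-\check X^0_s(\check x)|$ and hence the extra $|\check x-\check y|$ summand. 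Your averaged gradients satisfy $|G^\kappa_s-G^0_s|\leq \|\mathscr U\|_{C^2}\max(|\check N^\kappa_s(\check x)|,|\check N^\kappa_s(\check y)|)$ directly, so you are right that the $|\check x-\check y|$ term in~\eqref{e: fourtermsdiff} is slack; the paper's statement is simply not sharp, and this has no downstream consequence since Lemma~\ref{l:LyapunovKappa} only needs the bound to vanish as $\kappa,\epsilon\to0$.
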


Momentarily postponing the proofs of Lemmas~\ref{l:gronwall-difference} and~\ref{l: fourtermsdiff}, we now prove Lemma~\ref{l:LyapunovKappa}.

\begin{proof}[Proof of Lemma~\ref{l:LyapunovKappa}]

We prove this proposition in two following steps.
First, we show that for any $c > 0$, there exists a constant $\epsilon < s_*$ such that
\begin{equation} \label{e: SupSupP2V}
\limsup_{\kappa \to 0^+} \sup_{(x,y)\in \Delta(\epsilon)} \frac{\abs{P^{(2)}_\kappa V-P_0^{(2)}V}}{V} < c \,.
\end{equation}
Then, using~\eqref{e:LyapunovEpEq0} and choosing $c < 1 - \tilde \gamma$ imply that for some $\epsilon>0$ and all sufficiently small $\kappa\geq 0$,
\begin{equation}\label{e:Lyapunov-Pkappa-DeltaEp}
  P_\kappa^{(2)} V < (\tilde{\gamma} + c) V \quad\text{on } \Delta(\epsilon)\,.
\end{equation}
Outside~$\Delta(\epsilon)$, we will show that there exists a constant $b>0$ such that
\begin{equation} \label{e: P2KappaVLeK2}
   P_\kappa^{(2)} V \leq b
   \quad\text{on } \Delta(\epsilon)^c\,,
\end{equation}
for all sufficiently small $\kappa\geq 0$.
Using~\eqref{e:Lyapunov-Pkappa-DeltaEp} and~\eqref{e: P2KappaVLeK2} immediately implies~\eqref{e:LyapunovKappa} as desired.

In order to finish the proof we need to prove~\eqref{e: SupSupP2V} and~\eqref{e: P2KappaVLeK2}.
To prove~\eqref{e: SupSupP2V}, let $\epsilon$ be a  small $\kappa$-independent constant that will be chosen later. We let $x, y \in \Delta(\epsilon)$ and note
\begin{equation}
 P^{(2)}_\kappa V(x,y)-P^{(2)}_0 V(x,y) = \E[V(X_1^\kappa(x), X_1^\kappa(y))- V(X_1^0(x), X_1^0(y))]\,.
\end{equation}

Choose $\epsilon > 0$ such that for the constant $C_0$ in~\eqref{e: grownwall-bound} and~$s_*$ from Assumption~\ref{a:Lyapunov}, we have $C_0\epsilon < s_*$.
Let $\check x, \check y \in \R^d$ such that $[\check x]=x$, $[\check y]=y$, and $d(x,y)=\abs{\check x - \check y}$. Then by Lemma~\ref{l:gronwall-difference}, we have
\begin{equation}
  \abs{\check X_1^\kappa(\check x)-\check X_1^\kappa(\check y)} < C_0 \epsilon < s_* < \frac{1}{2}
  \,,
\end{equation}
for all $\kappa\geq 0$, and hence
\begin{equation}\label{e: X1kappaLipschitz}
\frac{1}{C_0}d(x,y)\leq d(X_1^\kappa(x), X_1^\kappa(y)) = \abs{\check X_1^\kappa(\check x)- \check X_1^\kappa(\check y)} \leq C_0d(x,y) \leq s_*
\,.
% \\
% \exp_{X_1^\kappa(x)}^{-1}X_1^\kappa(y) = \check X_1^\kappa(\check y)- \check X_1^\kappa(\check x)\,,
\end{equation}
%for all $\kappa\geq 0$.

Then using~\eqref{e: LyapunovV}, we get
\begin{equation}
P^{(2)}_\kappa V(x,y)-P^{(2)}_0 V(x,y) = \E\Bigg[\frac{\psi^\kappa}{\abs{\check Z_1+\tilde Z_1^\kappa}^p}-\frac{\psi^0}{\abs{\check Z_1}^p}\Bigg]\,,
\end{equation}
where
\begin{gather}
\psi^\kappa \defeq \psi(X_1^\kappa(x), \hat \omega(X_1^\kappa(x), X_1^\kappa(y)))\,,\\
\psi^0 \defeq \psi(X_1^0(x), \hat \omega(X_1^0(x), X_1^0(y)))\,,\\
Z_1 \defeq \check X_1^0(\check x) - \check X_1^0(\check y)\,,\\
\tilde Z_1^\kappa \defeq \check X_1^\kappa(\check x) - \check X_1^\kappa(\check y)-(\check X_1^0(\check x) - \check X_1^0(\check y))\,,
\\
\hat \omega(x', y') = \frac{\exp^{-1}_{x'}(y')}{d(x', y')}
\,.
\end{gather}
In particular, we can rewrite the terms into
\begin{equation}
\frac{\psi^\kappa}{\abs{\check Z_1+\tilde Z_1^\kappa}^p}-\frac{\psi^0}{\abs{\check Z_1}^p} = \psi^\kappa \paren[\Big]{\frac{1}{\abs{\check Z_1+\tilde Z_1^\kappa}^p}-\frac{1}{\abs{\check Z_1}^p}} + \frac{1}{\abs{\check Z_1}^p}(\psi^\kappa - \psi^0)\,.
\end{equation}
and deduce
\begin{align}
  \frac{\abs{P^{(2)}_\kappa V-P^{(2)}_0 V}}{V}
    &\leq \frac{d(x,y)^p}{\inf_{SM}\psi_p}\paren[\Big]{\E\Bigl[\psi^\kappa\abs[\Big]{\frac{1}{\abs{\check Z_1+\tilde Z_1^\kappa}^p}-\frac{1}{\abs{\check Z_1}^p}} + \frac{1}{\abs{\check Z_1}^p}\abs{\psi^\kappa - \psi^0}\Bigr]}
  \\
    &\leq \frac{d(x,y)^p}{\inf_{SM}\psi}\Bigl(
      \E \one_{\set{\check W_1^* \leq \alpha}} (F_1 + F_2)
      + \P \paren{\check W_1^* \geq \alpha} \sup_\Omega (F_1 + F_2)
      \Bigr)\,,
    \label{e:P2kappa-P201}
\end{align}
where
\begin{align}
  F_1 \defeq \psi^\kappa\abs[\Big]{\frac{1}{\abs{\check Z_1+\tilde Z_1^\kappa}^p}-\frac{1}{\abs{\check Z_1}^p}} \,,
  \quad\text{and}\quad
  F_2 \defeq \frac{1}{\abs{\check Z_1}^p}\abs{\psi^\kappa - \psi^0}\,,
\end{align}
and~$\alpha > 0$ is a large $\kappa$-independent constant that will be chosen shortly.

We will now bound each term on the right of~\eqref{e:P2kappa-P201}.
%We will prove~\eqref{e:P2kappa-P20} by splitting the expectation on right hand side of~\eqref{e:P2kappa-P201} into two events: $\set{W_1^* \leq \alpha}$ and $\set{W_1^* > \alpha}$.
On the event $\set{\check W_1^* \leq \alpha}$, we note that Lemma~\ref{l: fourtermsdiff} implies
\begin{equation} \label{e: wub}
\abs{\tilde Z_1^\kappa} \leq C_1(\alpha \sqrt{\kappa} + \epsilon)\abs{\check Z_1}\,,
\end{equation}
so
\begin{equation} \label{e: zpluswlb}
\abs{\check Z_1+\tilde Z_1^\kappa} \geq \abs{\check Z_1} - \abs{\tilde Z_1^\kappa} \geq \abs{\check Z_1}(1-C_1(\alpha\sqrt{\kappa}+\epsilon)) > 0\,,
\end{equation}
for sufficiently small~$\kappa , \epsilon > 0$.
%where the strict positivity is due to~\eqref{e: epsilonhalf} and~\eqref{e: kappahalf}.
Moreover,
\begin{equation}
\abs{\check Z_1+\tilde Z_1^\kappa} \leq \abs{\check Z_1}+\abs{\tilde Z_1^\kappa} \leq \abs{\check Z_1}(1+C_1(\alpha\sqrt{\kappa}+\epsilon))\,.
\end{equation}
These two inequalities imply that
\begin{gather}
  \label{e:zwtmp1}
  \frac{1}{\abs{\check Z_1}^p} - \frac{1}{\abs{\check Z_1+\tilde Z_1^\kappa}^p} \leq \frac{1}{\abs{\check Z_1}^p}\paren[\Big]{1-\frac{1}{(1+C_1(\alpha\sqrt{\kappa}+\epsilon))^p}}\,,
  \\
  \label{e:zwtmp2}
  \frac{1}{\abs{\check Z_1+\tilde Z_1^\kappa}^p} - \frac{1}{\abs{\check Z_1}^p} \leq \frac{1}{\abs{\check Z_1}^p}\paren[\Big]{\frac{1}{(1-C_1(\alpha\sqrt{\kappa}+\epsilon))^p}-1}\,.
\end{gather}
By convexity of the function $\xi \mapsto \xi^{-p}$, we have
\begin{equation}
1-\frac{1}{(1+C_1(\alpha\sqrt{\kappa}+\epsilon))^p} \leq
\frac{1}{(1-C_1(\alpha\sqrt{\kappa}+\epsilon))^p}-1 \,.
\end{equation}
Combined with~\eqref{e:zwtmp1} and~\eqref{e:zwtmp2} this implies
\begin{equation} \label{e: negativePpowerdiff}
\abs[\Big]{\frac{1}{\abs{\check Z_1+\tilde Z_1^\kappa}^p}-\frac{1}{\abs{\check Z_1}^p}} \leq \frac{1}{\abs{\check Z_1}^p}\paren[\Big]{\frac{1}{(1-C_1(\alpha\sqrt{\kappa}+\epsilon))^p}-1}\,.
\end{equation}
Multiplying~\eqref{e: negativePpowerdiff} by $\frac{d(x,y)^p}{\inf_{SM}\psi} \psi^\kappa$ and using~\eqref{e: X1kappaLipschitz} gives
\begin{equation}\label{t: first}
 \frac{d(x,y)^p}{\inf_{SM}\psi}  \E \one_{ \set{\check W_1^* \leq \alpha} } F_1
    \leq C_0^p\frac{\norm{\psi}_\infty}{\inf_{SM}\psi}\paren[\Big]{\frac{1}{(1-C_1(\alpha\sqrt{\kappa}+\epsilon))^p}-1}
    \,.
\end{equation}
\smallskip

Next, we bound $\E \one_{ \set{\check W_1^* \leq \alpha} } F_2$.
We note that by~\eqref{e: xk0grownwall},~\eqref{e: xxygrownwall}, and~\eqref{e: X1kappaLipschitz}, we can choose sufficiently small $\kappa, \alpha, \epsilon$ to ensure
\begin{equation}
  \diam\set{\check X_1^\kappa(\check x), \check X_1^0(\check x), \check X_1^\kappa(\check y), \check X_1^0(\check y)} < \frac{1}{2}
  \,.
\end{equation}
In this case we have
\begin{align}
d((X_1^\kappa(x),&\hat\omega(X_1^\kappa(x),X_1^\kappa(y))), (X_1^0(x),\hat\omega(X_1^0(x),X_1^0(y)))) \\
&= \abs{\check X_1^\kappa (\check x) - \check X_1^0(\check x)} + \abs[\Bigg]{\frac{\check X_1^\kappa(\check y) - \check X_1^\kappa(\check x)}{\abs{\check X_1^\kappa(\check y) - \check X_1^\kappa(\check x)}}-\frac{\check X_1^0(\check y) - \check X_1^0(\check x)}{\abs{\check X_1^0(\check y) - \check X_1^0(\check x)}}}\\
&= \abs{\check X_1^\kappa (\check x) - \check X_1^0(\check x)} + \abs[\Bigg]{
\frac{\check Z_1+\tilde Z_1^\kappa}{\abs{\check Z_1+\tilde Z_1^\kappa}} - \frac{\check Z_1}{\abs{\check Z_1}}}\,.
\end{align}

The inequalities~\eqref{e: xk0grownwall} and the general bound
\begin{equation}
\abs[\Big]{\frac{v}{\abs{v}}-\frac{v'}{\abs{v'}}} \leq \frac{2\abs{v-v'}}{\min\set{\abs{v},\abs{v'}}}\,,
\end{equation}
immediately imply
\begin{align}\label{e:inputs-difference}
  \one_{ \set{\check W_1^* \leq \alpha}}
\paren[\Bigg]{
  \abs{\check X_1^\kappa (\check x) - \check X_1^0(\check x)} + \abs[\Bigg]{
\frac{\check Z_1+\tilde Z_1^\kappa}{\abs{\check Z_1+\tilde Z_1^\kappa}} - \frac{\check Z_1}{\abs{\check Z_1}}}
}\\
  \leq
  C_0\alpha\sqrt{\kappa} + \frac{2 \abs{\tilde Z_1^\kappa}}{\min\set{\abs{\check Z_1+\tilde Z_1^\kappa}, \abs{\check Z_1}}}\,.
\end{align}
Using~\eqref{e: wub} and~\eqref{e: zpluswlb} we see
\begin{equation}\label{e:tmp2024-01-31-1}
  C_0\alpha\sqrt{\kappa} + 2\frac{\abs{\tilde Z_1^\kappa}}{\min\set{\abs{\check Z_1+\tilde Z_1^\kappa}, \abs{\check Z_1}}}
  \leq C_0\alpha\sqrt{\kappa} + \frac{2C_1(\alpha\sqrt{\kappa}+\epsilon)}{1-C_1(\alpha\sqrt{\kappa}+\epsilon)} \,.
\end{equation}
%which vanishes as $\kappa, \epsilon \to 0$.
Now fix $\eta > 0$ to be a $\kappa$-independent constant that will be chosen later.
Using uniform continuity of~$\psi$ find~$\delta > 0$ such that
\begin{equation} \label{e: psipuc}
  \forall z,z'\in SM, \quad d(z,z')<\delta \implies \abs{\psi(z)-\psi(z')} < \eta\,.
\end{equation}
If~$\kappa, \epsilon$ are sufficiently small, the right hand side of~\eqref{e:tmp2024-01-31-1} can be made smaller than~$\delta$.
Using~\eqref{e:inputs-difference} implies
\begin{equation}
  d((X_1^\kappa(x),\hat\omega(X_1^\kappa(x),X_1^\kappa(y))), (X_1^0(x),\hat\omega(X_1^0(x),X_1^0(y)))) < \delta\,,
\end{equation}
and using~\eqref{e: psipuc} implies
\begin{equation}
  \abs{\psi^\kappa - \psi^0} < \eta\,.
\end{equation}
Thus by~\eqref{e: X1kappaLipschitz}, we see
\begin{equation}\label{t: second}
 \frac{d(x,y)^p}{\inf_{SM}\psi}  \E \one_{ \set{\check W_1^* \leq \alpha} } F_2
  \leq
   \frac{C_0^p}{\inf_{SM}\psi}\eta\,.
\end{equation}
\smallskip

Finally, for the last term on the right of~\eqref{e:P2kappa-P201}, we note
\begin{multline}
\psi^\kappa\abs[\Big]{\frac{1}{\abs{\check Z_1+\tilde Z_1^\kappa}^p}-\frac{1}{\abs{\check Z_1}^p}} + \frac{1}{\abs{\check Z_1}^p}\abs{\psi^\kappa - \psi^0} \\
\leq \norm{\psi}_\infty \paren[\Big]{\frac{1}{\abs{\check Z_1+\tilde Z_1^\kappa}^p}+\frac{1}{\abs{\check Z_1}^p}} + \frac{2}{\abs{\check Z_1}^p}\norm{\psi}_\infty\,,
\end{multline}
so
using~\eqref{e: X1kappaLipschitz} shows
\begin{equation}\label{t: third}
  \frac{d(x,y)^p}{\inf_{SM}\psi}
    \P \paren{\check W_1^* \geq \alpha} \sup_\Omega (F_1 + F_2)
  \leq \frac{4C_0^p \norm{\psi}_\infty}{\inf_{SM}\psi}\P[\check W_1^* > \alpha]
  \,.
\end{equation}
\smallskip

Using~\eqref{t: first}, \eqref{t: second} and~\eqref{t: third} in~\eqref{e:P2kappa-P201} we obtain
\begin{multline}
\sup_{(x,y) \in \Delta(\epsilon)}\frac{\abs{P^{(2)}_\kappa V-P^{(2)}_0 V}}{V} \leq C_0^p\frac{\norm{\psi}_\infty}{\inf_{SM}\psi}\paren[\Big]{\frac{1}{(1-C_1(\alpha\sqrt{\kappa}+\epsilon))^p}-1} \\
+ \frac{C_0^p}{\inf_{SM}\psi}\eta + \frac{4C_0^p \norm{\psi}_\infty}{\inf_{SM}\psi}\P[W_1^* > \alpha]\,.
\end{multline}

Thus,
\begin{multline}\label{e:tmp2024-01-31-2}
\limsup_{\kappa \to 0+} \sup_{(x,y)\in \Delta(\epsilon)} \frac{\abs{P^{(2)}_\kappa V-P_0^{(2)}V}}{V} \leq C_0^p\frac{\norm{\psi}_\infty}{\inf_{SM}\psi}\paren[\Big]{\frac{1}{(1-C_1\epsilon)^p}-1} \\
+ \frac{C_0^p}{\inf_{SM}\psi}\eta + \frac{4C_0^p \norm{\psi}_\infty}{\inf_{SM}\psi}\P[W_1^* > \alpha] \,.
\end{multline}
Now, we choose $\alpha, \eta,$ and $\epsilon$ such that
\begin{gather}
\frac{4C_0^p \norm{\psi}_\infty}{\inf_{SM}\psi}\P[W_1^* > \alpha] < \frac{1}{3}c\,, \\
 \frac{C_0^p}{\inf_{SM}\psi}\eta < \frac{1}{3}c\,,\\
C_0^p \frac{\norm{\psi}_\infty}{\inf_{SM}\psi}(\frac{1}{(1-C_1\epsilon)^p}-1) < \frac{1}{3}c \,,
\end{gather}
then using~\eqref{e:tmp2024-01-31-2} will imply~\eqref{e: SupSupP2V} as desired.
\medskip

Finally, in order to prove~\eqref{e: P2KappaVLeK2}, we see from the Assumption~\ref{a:Lyapunov} that $V$ is continuous on the compact set $K \defeq \overline{\Delta(s_*)}\setminus \Delta(\frac{s_*}{2})$ so it can be continuously extended to the compact set $K' \defeq \T^{d, (2)}\setminus \Delta(\frac{s_*}{2})$ such that
\begin{equation}
  1 \leq \inf_{K} V
    = \inf_{K'} V
    \leq \sup_{K'} V
    = \sup_{K} V
    \,.
\end{equation}
%Thus without loss of generality, we can assume that $V$ is continuous on $\T^{d, (2)}$, and by rescaling if necessary, $V\geq 1$, ~\eqref{e:LyapunovEpEq0}, and~\eqref{e: LyapunovV} still hold.

Now, let $(x,y) \in \Delta(\epsilon)^c$. On the event $E_1\defeq \set{(X_1^\kappa(x), X_1^\kappa(y))\in \Delta(s_*)}$, there exist $\check x,\check y \in \R^d$ such that $[\check x] = x$, $[\check y] = y$, and
\begin{equation}
d(X_1^\kappa(x), X_1^\kappa(y)) = \abs{\check X_1^\kappa(\check x)- \check X_1^\kappa(\check y)} < s_*\,.
\end{equation}
By~\eqref{e: grownwall-bound}, $\abs{\check x - \check y}<\frac{s_*}{C_0}<\frac{1}{2}$ so $\abs{\check x - \check y}=d(x,y)$ and
\begin{equation}
d(X_1^\kappa(x), X_1^\kappa(y)) \geq C_0d(x,y) \geq C_0\epsilon\,,
\end{equation}
which implies
\begin{equation}\label{e:VE1}
\E[V(X_1^\kappa(x), X_1^\kappa(y))\one_{E_1}]\leq C_0^{-p}\epsilon^{-p}\norm{\psi}_\infty\,.
\end{equation}
On the event $E_1^c$,
%$V$ is simply bounded above by some constant $c$ as it is continuous on the compact space $\T^{d, (2)}\setminus \Delta(\frac{s_*}{2})$. This implies
we have
\begin{equation}\label{e:VE1c}
  \E[V(X_1^\kappa(x), X_1^\kappa(y))\one_{E_1^c}]
    \leq \sup_{K'} V
  \,.
\end{equation}
Bounds~\eqref{e:VE1} and~\eqref{e:VE1c} yield~\eqref{e: P2KappaVLeK2} with
\begin{equation}
  b\defeq C_0^{-p}\epsilon^{-p}\norm{\psi}_\infty + \sup_{K'} V
  \,.
\end{equation}
This completes the proof.
\end{proof}

\subsection*{Gradient estimates on the stochastic flows}

It remains to prove  Lemmas \ref{l:gronwall-difference} and~\ref{l: fourtermsdiff}. Lemma~\ref{l:gronwall-difference} is a direct result of applying Gr\"onwall's inequality twice to the difference $\abs{X_t^\kappa(\check x)-X_t^\kappa(\check y)}$ near $t=0$ and $t=1$.

\begin{proof}[Proof of Lemma~\ref{l:gronwall-difference}]
For the upper bound, we see that for any $\kappa\geq 0$ and $0\leq t \leq 1$,
\begin{equation}
\check X_t^\kappa(\check x) - \check X_t^\kappa(\check y) = \check x- \check y + \int_0^t u(X_s^\kappa(\check x),s) - u(\check X_s^\kappa(\check y),s) ds\,,
\end{equation}
so
\begin{equation}
\abs{\check X_t^\kappa(\check x) - \check X_t^\kappa(\check y)} \leq \abs{\check x- \check y} + A_1 \int_0^t \abs{\check X_s^\kappa(\check x) - \check X_s^\kappa(\check y)} ds\,.
\end{equation}
By Gr\"onwall's inequality, we have
\begin{equation}
\abs{\check X_1^\kappa(x) - \check X_1^\kappa(y)} \leq e^{A_1} \abs{\check x - \check y}\,,
\end{equation}
where
\begin{equation}\label{e: A1}
  A_1\defeq
  %\sup_{(\xi, t)\in \mathscr M \times [0,1]}\norm{\nabla_x \mathscr U(\xi, \cdot, t)}_{L^\infty(\T^d)}\,.
    \norm{\nabla_x \mathscr U}_{L^\infty(\mathscr M \times \T^d \times [0, 1] )}
\end{equation}
Similarly, for the lower bound, we see that for any $\kappa\geq 0$ and $0\leq t\leq 1$,
\begin{align}
\check X_{1-t}^\kappa (\check x) - \check X_{1-t}^\kappa (\check y) &= \check X_1^\kappa (\check x) - \check X_1^\kappa(\check y) - \int_{1-t}^1 \paren[\Big]{u(\check X_s^\kappa(\check x),s)- u(\check X_s^\kappa(\check y),s)} ds \\
&= \check X_1^\kappa (\check x) - \check X_1^\kappa(\check y) - \int_0^t \paren[\Big]{u(\check X_{1-s}^\kappa(\check x),s)- u(\check X_{1-s}^\kappa(\check y),s)} ds
\end{align}
so
\begin{equation}
\abs{\check X_{1-t}^\kappa(\check x) - \check X_{1-t}^\kappa(\check y)} \leq \abs{\check X_1^\kappa (\check x) - \check X_1^\kappa(\check y)} + A_1 \int_0^t \abs{\check X_{1-s}^\kappa(\check x) - \check X_{1-s}^\kappa(\check y)} ds\,.
\end{equation}
By Gr\"onwall's inequality, we have
\begin{equation}
\abs{\check x- \check y} \leq e^{A_1} \abs{\check X_1^\kappa(\check x) - \check X_1^\kappa(\check y)}\,.
\end{equation}
Thus, setting $C_0\defeq e^{A_1}$ concludes the proof.
\end{proof}

To prove Lemma~\ref{l: fourtermsdiff}, we first explicitly write down the differences $\check X_t^\kappa(\check x)-\check X_t^\kappa(\check y)$ and $\check X_t^0(\check x)- \check X_t^0(\check y)$ by using the differential equations they satisfy. Then, we use mean value theorem and Gr\"onwall's inequality multiple times to estimate their difference.
\begin{proof}[Proof of Lemma~\ref{l: fourtermsdiff}]
For $0\leq t\leq 1$, we have
\begin{align} \label{e: xtk diff}
\check X_t^\kappa(\check x) - \check X_t^\kappa(\check y)
  &= \check x - \check y + \int_0^t (u(\check X_s^\kappa(\check x),s) - u(\check X_s^\kappa(\check y)),s) \, ds \\
  &= \check x - \check y + \int_0^t \nabla_x u(\beta_1(s),s)\cdot (\check X_s^\kappa(\check x) - \check X_s^\kappa(\check y)) \, ds
\end{align}
for some $\beta_1(s) = (\beta_1^1(s), \beta_1^2(s), \ldots, \beta_1^d(s)) \in (\R^d)^d$, where
\begin{equation}
\nabla_x u(\beta_1(s), s) \defeq (\nabla u_1(\beta_1^1(s), s), \nabla u_2(\beta_1^2(s), s), \ldots, \nabla u_d(\beta_1^d(s), s)) \in (\R^d)^d\,.
\end{equation}
Similarly,
\begin{equation} \label{e: xt0 diff}
\check X_t^0(\check x)-\check X_t^0(\check y) = \check x- \check y + \int_0^t \nabla_x u (\beta_2(s),s) \cdot (\check X_s^0(\check x) - \check X_s^0(\check y)) \, ds
\end{equation}
for some $\beta_2(s) \in (\R^d)^d$.
We define
\begin{equation}
S(t) \defeq \check X_t^\kappa(\check x) - \check X_t^\kappa(\check y) - (\check X_t^0(\check x) - \check X_t^0(\check y))\,.
\end{equation}
Then, by taking the difference between~\eqref{e: xtk diff} and~\eqref{e: xt0 diff}, we get
\begin{multline}
S(t) = \int_0^t \nabla_x u(\beta_1(s),s) \cdot S(s) \\
+ (\nabla_x u (\beta_1(s),s) - \nabla_x u(\beta_2(s),s)) \cdot (\check X_s^0(\check x) - \check X_s^0 (\check y)) \, ds \,,
\end{multline}
which implies
\begin{equation}
\varrho_t^\kappa(\check x,\check y)\leq A_1 \int_0^t \varrho_s^\kappa(\check x,\check y) ds + A_2 \int_0^t \abs{\beta_1(s) - \beta_2(s)}\abs[\Big]{\check X_s^0(\check x) - \check X_s^0(\check y)} \, ds\,,
\end{equation}
where
\begin{equation}
  A_2\defeq
    \norm{\nabla_x^2 \mathscr U}_{L^\infty(\mathscr M \times \T^d \times [0, 1] )}
    %\sup_{(\xi, t)\in \mathscr M \times [0,1]}\norm{\nabla_x^2 \mathscr U(\xi, \cdot, t)}_{L^\infty(\T^d)}
  \,.
\end{equation}
  For each $(s, i)\in [0, t]\times \set{1, \ldots, d}$,
  the points $\beta_1^i(s)$, and~$\beta_2^i(s)$ are on the line segments joining~$\check X^0_s(\check x)$ and~$\check X^0_s(\check y)$, and~$\check X^\kappa_s(\check x)$ and~$\check X^\kappa_s(\check y)$ respectively.
 Thus
\begin{align}
  \abs{\beta_1(s)-\beta_2(s)}\leq \max\{ &\abs{\check X_s^\kappa(\check y)-\check X_s^0(\check y)}, \abs{\check X_s^\kappa(\check x)-\check X_s^0(\check x)},\\
   & \abs{\check X_s^\kappa(\check y)-\check X_s^0(\check x)}, \abs{\check X_s^\kappa(\check x)-\check X_s^0(\check y)}\}
\end{align}
and in particular,
\begin{align}
  \one_{\set{\check W_1^* \leq \alpha}} \abs{\check X_s^\kappa(\check x)-\check X_s^0(\check x)} &\leq \alpha\sqrt{\kappa}e^{A_1 s}\label{e: xk0grownwall}\\
\abs{\check X_s^0(\check x)-\check X_s^0(\check y)} &\leq \abs{\check x-\check y}e^{A_1 s}
\label{e: xxygrownwall}
\end{align}
by Gr\"onwall's inequality.

As a result, we obtain
\begin{multline}
  \one_{ \set{\check W_1^* \leq \alpha } }
    \varrho_t^\kappa(\check x,\check y)
  \leq
    A_1  \int_0^t \one_{ \set{\check W_1^* \leq \alpha } }
      \varrho_s^\kappa (\check x, \check y) ds
  \\
    + A_2  (\alpha\sqrt{\kappa}+\abs{\check x-\check y})e^{A_1 t}\int_0^t \abs[\Big]{\check X_s^0(\check x) - \check X_s^0(\check y)}ds
    \,.
\end{multline}
Using Gr\"onwall's inequality and Lemma~\ref{l:gronwall-difference} this gives
\begin{gather}
  \one_{ \set{\check W_1^* \leq \alpha } }
    \varrho_1^\kappa(\check x,\check y)
      %\leq C_1(\alpha\sqrt{\kappa} + \abs{\check x-\check y})\int_0^1 \abs[\Big]{\check X_s^0(\check x) - \check X_s^0(\check y)}ds
      % \\
  \leq
    C_1(\alpha\sqrt{\kappa} + \abs{\check x-\check y})\abs[\Big]{\check X_1^0(\check x) - \check X_1^0(\check y)}\,,
\end{gather}
for some constant~$C_1 = C_1( A_1, A_2 )$.
\end{proof}

\section{A stable small set.}\label{s:small-set}

Next, in order to obtain the minorizing condition~\eqref{e:V-sublevel-small} stated in Lemma~\ref{l:harrisassumptions}, we will show that Assumptions~\ref{a:submersion-2point} and~\ref{a:flow} imply the existence of a $\kappa$-independent small set.

\begin{lemma}\label{l: stablesmallset}
Suppose Assumptions~\ref{a:submersion-2point} and~\ref{a:flow} hold.
% Suppose there is some $x \in \T^{d, (2)}$ and $n \in \N$ such that the map
% \begin{equation}
% u \in \Omega_u^n \mapsto X^{0, (2)}_n(x)
% \end{equation}
% is a submersion at some $u_0 \in \Omega_u^n$.
Then there exist nonempty open sets $A, B \subseteq \T^{d, (2)}$ and a constant $\beta > 0$ such that
\begin{equation}\label{e:stablesmallsetconclusion}
\inf_{x\in A} P_\kappa^{(2),n}(x,\cdot) \geq \beta \Leb|_{B} (\cdot)\,.
\end{equation}
for all sufficiently small $\kappa \geq 0$.
Here~$\Leb|_B$ denotes the restriction of the Lebesgue measure to~$B$.
\end{lemma}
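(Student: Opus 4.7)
The plan is to build the small set from the $\kappa=0$ submersion hypothesis (Assumption~\ref{a:submersion-2point}) and then show that this construction is stable for small $\kappa > 0$ by conditioning on the Brownian motion staying small and invoking the uniform flow regularity of Assumption~\ref{a:flow}. For $\kappa = 0$, the idea is classical: apply the submersion theorem to $\mathscr{X}^{(2)}_n(\cdot, x_*)$ at $\xi_*$ to produce a $2d$-dimensional submanifold $\Sigma \subset \mathscr{M}^n$ through $\xi_*$ transverse to $\ker D_\xi \mathscr{X}^{(2)}_n(\xi_*, x_*)$, such that $\mathscr{X}^{(2)}_n(\cdot, x_*)|_\Sigma$ is a $C^1$-diffeomorphism from a neighborhood $V \subset \Sigma$ of $\xi_*$ onto an open set $U$ containing $y_* := \mathscr{X}^{(2)}_n(\xi_*, x_*)$. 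Decomposing a tubular neighborhood of $\Sigma$ in $\mathscr{M}^n$ as a Fubini product (equivalently, applying the coarea formula) and using the density lower bound $\rho_n \geq c$ from Assumption~\ref{a:submersion-2point}(i) yields $P^{(2),n}_0(x_*, \cdot) \geq \beta_0 \Leb|_B(\cdot)$ on some open $B \subset U$ containing $y_*$. Since $(x, \xi) \mapsto \mathscr{X}^{(2)}_n(\xi, x)$ is $C^2$ by Assumption~\ref{a:flow}, the submersion and diffeomorphism properties are stable under small perturbations of $x$, so after possibly shrinking $B$ the same bound holds (with a slightly smaller constant) uniformly for $x$ in an open neighborhood $A$ of $x_*$.

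To transfer this to $\kappa > 0$, fix $\alpha > 0$ with $\P(\check W^*_n \leq \alpha) \geq 1/2$ and condition on the event $E_\alpha := \{\check W^*_n \leq \alpha\}$. A Gr\"onwall-type argument analogous to the proofs of Lemmas~\ref{l:gronwall-difference} and~\ref{l: fourtermsdiff}, but now carried out simultaneously for the spatial variable and the first variation in the noise parameter $\xi$ (which is admissible since $\|\nabla_\xi \mathscr{U}\|_{L^\infty}$ is finite by Assumption~\ref{a:flow}), shows that on $E_\alpha$ the random map
\begin{equation*}
\xi \mapsto (X^\kappa_n(x_1), X^\kappa_n(x_2))
\end{equation*}
is a $C^1$-small perturbation of $\xi \mapsto \mathscr{X}^{(2)}_n(\xi, x)$, of size $O(\sqrt{\kappa}\alpha)$ in the $C^1_\xi$-topology, uniformly in $W \in E_\alpha$, $x \in A$, and $\xi$ in a neighborhood of $\xi_*$. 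Since submersion and local diffeomorphism are open conditions in the $C^1$ topology, for all sufficiently small $\kappa > 0$ the perturbed map restricted to $V \subset \Sigma$ remains a diffeomorphism onto an open set containing a slightly shrunken $B' \subset B$, with Jacobian lower bound uniform in $W \in E_\alpha$, $x \in A$, and small $\kappa$.

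Integrating the resulting pointwise lower bound against the joint law of $(\underline{\omega}_n, W)$ and restricting to $E_\alpha$ yields $P^{(2),n}_\kappa(x, \cdot) \geq \beta \Leb|_{B'}(\cdot)$ for all $x \in A$ and some $\beta > 0$ independent of small $\kappa$, which is exactly~\eqref{e:stablesmallsetconclusion}. The main technical obstacle is the $C^1$-in-$\xi$ perturbation estimate: a uniform-in-$W$ comparison of the derivatives $\nabla_\xi X^\kappa_n$ and $\nabla_\xi \mathscr{X}_n$ on $E_\alpha$. This step is not explicitly contained in the lemmas already stated, but should follow from the same ideas as Lemma~\ref{l: fourtermsdiff} by running an additional Gr\"onwall argument on the first variation equation of the stochastic flow with respect to the noise parameter $\xi$, using the bound on $\nabla_\xi \mathscr{U}$ in Assumption~\ref{a:flow}.
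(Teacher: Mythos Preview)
Your approach is correct in outline and would go through, but the paper takes a cleaner route that avoids precisely the ``main technical obstacle'' you flag at the end. Instead of proving $C^1$-closeness of $\xi \mapsto X^{\kappa,(2)}_n(\xi,x)$ to $\xi \mapsto \mathscr X^{(2)}_n(\xi,x_*)$ and then invoking stability of local diffeomorphisms, the paper proves a purely $C^0$ perturbation lemma: if $f\colon B_r\subset\R^N\to\R^{2d}$ is a submersion at the origin, then there exist $\delta,s>0$ such that any \emph{Lipschitz} $g$ with $\|f-g\|_{L^\infty(B_r)}\le\delta$ satisfies $g(B_r)\supseteq B_s$. The proof is a short degree-theory argument after straightening $f$ by the constant rank theorem. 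On the event $\{\check W^*_n\le\alpha\}$ and for $x\in B_\eta(x_*)$, a Gr\"onwall bound (only $C^0$, no variation equation) gives $\|g-f\|_{L^\infty}<\delta$, so the image of $g$ covers $B_s$; a separate one-line Gr\"onwall lemma gives a $\kappa$-uniform Lipschitz bound $\Lip(g)\le A_3 e^{n(1+A_1)}$ in $\xi$, and this Lipschitz constant alone converts the image containment into the preimage volume lower bound $|g^{-1}(A)\cap B_\epsilon(\xi_*)|\gtrsim |A\cap B_s|$. The density bound $\rho_n\ge c$ then finishes.

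What this buys: the paper never differentiates the stochastic flow in $\xi$, so the second half of Assumption~\ref{a:flow} is used only for the Lipschitz-in-$\xi$ bound, not for any $C^1$-in-$\xi$ stability. Your approach would require an additional lemma controlling $\nabla_\xi X^\kappa_n - \nabla_\xi\mathscr X_n$ on $E_\alpha$; this is certainly doable by the method you suggest (Gr\"onwall on the first variation), but it is extra work that the degree-theoretic trick sidesteps. Conversely, your route via the coarea formula and explicit Jacobian bounds makes the measure lower bound more transparent than the paper's somewhat terse final chain of inequalities.
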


To prove Lemma~\ref{l: stablesmallset}, we need the following two lemmas.
\begin{lemma}\label{l:submersionPushForward}
  Let $f \colon B_r \subseteq \R^N \to \R^d$ be a $C^1$ function such that $f(0) = 0$ and $\rank(Df(0)) = d$.
  Then there exist $\delta, s > 0$ such that, for any $g \colon \R^N \to \R^d$ Lipschitz with $\|f-g\|_{L^\infty(B_r)} \leq \delta$, we have
  \begin{equation}
  	g(B_r) \supseteq B_s
  \end{equation}
  Here
  %$g_* \Leb$ denotes the pushforward of Lebesgue measure under $g$, and
  $B_r$ denotes the open ball centered at~$0$.
\end{lemma}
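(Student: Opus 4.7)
The plan is to prove this by a Brouwer-degree argument, reducing to an equal-dimensional problem via the submersion hypothesis. Since $\rank Df(0) = d$, choose a $d$-dimensional linear subspace $V \subseteq \R^N$ on which $Df(0)|_V \colon V \to \R^d$ is a linear isomorphism. Applying the inverse function theorem to the restriction $\tilde f \defeq f|_V$, I would obtain $\rho > 0$ (with $\overline{B_\rho} \cap V \subseteq B_r$) such that $\tilde f$ restricted to a neighborhood of $\overline{B_\rho} \cap V$ in $V$ is a $C^1$-diffeomorphism onto an open subset of $\R^d$. Choosing $s > 0$ small enough that $\overline{B_{3s}} \subseteq \tilde f(B_\rho \cap V)$, the preimage $\tilde f^{-1}(\overline{B_{3s}})$ is a compact subset of the open disk $B_\rho \cap V$ and therefore disjoint from the relative boundary $\partial B_\rho \cap V$; this gives $|\tilde f(x)| > 3s$ for every $x \in \partial B_\rho \cap V$.

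Next, set $\delta \defeq s$ and let $g \colon \R^N \to \R^d$ be any Lipschitz perturbation with $\|f-g\|_{L^\infty(B_r)} \leq \delta$. Fix an arbitrary target $p \in B_s$, let $D \defeq \overline{B_\rho} \cap V$ (a closed topological $d$-disk after identifying $V \cong \R^d$), and consider the straight-line homotopy $H_t(x) \defeq (1-t)f(x) + tg(x)$ for $t \in [0,1]$. For $x \in \partial D$ and $t \in [0,1]$,
\begin{equation*}
  |H_t(x) - p| \geq |f(x)| - \|f-g\|_{L^\infty(B_r)} - |p| \geq 3s - s - s > 0,
\end{equation*}
so $H_t$ never takes the value $p$ on $\partial D$. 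By homotopy invariance of the Brouwer degree, $\deg(g|_D, p) = \deg(f|_D, p) = \pm 1$, the latter because $\tilde f|_D$ is a diffeomorphism whose image contains $p$. Hence $p \in g(D) \subseteq g(B_r)$, and since $p \in B_s$ was arbitrary, $g(B_r) \supseteq B_s$.

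The conceptual core is the submersion hypothesis, which slices the problem down to a $d$-dimensional disk where the inverse function theorem and degree theory both apply; after that, $C^0$-stability is automatic from homotopy invariance. I do not expect any serious obstacle: the only mildly delicate point is the initial choice of $\rho$ ensuring the quantitative gap $|f| \geq 3s$ on $\partial B_\rho \cap V$, which is handled by the open-mapping property of a local diffeomorphism. The Lipschitz (rather than $C^1$) regularity of $g$ is harmless since Brouwer degree is defined for continuous maps.
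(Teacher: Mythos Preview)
Your proof is correct and follows essentially the same approach as the paper: reduce to an equal-dimensional problem via the submersion hypothesis and then invoke homotopy invariance of the Brouwer degree. The only cosmetic difference is that the paper straightens $f$ globally using the constant rank theorem (so $f$ becomes a coordinate projection and the perturbed map is $C^0$-close to the identity on a $d$-dimensional slice), whereas you restrict directly to a linear $d$-plane and apply the inverse function theorem there; the degree computation that follows is identical in both cases.
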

\begin{proof}
  By the constant rank theorem, there are diffeomorphisms $\alpha$ and $\beta$ which fix the origin such that $(\alpha \circ f \circ \beta)(x_1, \dots, x_N) = (x_1, \dots, x_d)$. Choose $\delta > 0$ so that $\|(\alpha \circ f \circ \beta) - (\alpha \circ g \circ \beta)\|_{L^\infty(B_1)} \leq \frac{1}{4}$. Let $x_{d+1}, \dots, x_N$ be arbitrary with $x_{d+1}^2 + \dots + x_N^2 \leq \frac{1}{4}$ and define $h(x_1, \dots, x_d) := (\alpha \circ g \circ \beta)(x_1, \dots, x_d, x_{d+1}, \dots, x_N)$. Then $h \colon B_{3/4} \subseteq \R^d \to \R^d$ satisfies $|h(x)-x| \leq \frac{1}{4}$ at every point.

  We claim that $h(B_{3/4}) \supseteq B_{1/2}$. Indeed, compute the degree $\deg(h, B_{3/4}, x) = 1$ for each $x \in B_{1/2}$ (the degree is a homotopy invariant, and $h$ is homotopic to the identity).

  Then $(\alpha \circ g \circ \beta)(B_1) \supseteq B_{1/2}$, and therefore $g(B_r) \supseteq B_s$ for some $s > 0$ depending only on $\alpha$ and $\beta$.
\end{proof}

\begin{lemma}\label{l:LipDependenceOnU}
  For any $n \in \N$, $\kappa \geq 0$, $x \in \T^{d, (2)}$, $\xi, \xi'\in \mathscr M^n$, and almost any realization of the noise $W$, we have
  \begin{equation}
    \label{eq:lipschitz-dependence-on-u}
    d( X_n^{\kappa, (2)}(\xi, x), X_n^{\kappa, (2)}(\xi', x) )
      \leq A_3\exp(n(1+A_1)) d_\infty(\xi, \xi') \,,
    %d( X_n^\kappa(\xi, x), X_n^\kappa(\xi', x) )
    %  \leq A_3\exp(n(1+A_1)) d_\infty(\xi, \xi') \,,
  \end{equation}
  where $A_1$ is defined as in~\eqref{e: A1}, and
  \begin{align}
    A_3 &\defeq
      \norm{\nabla_\xi \mathscr U}_{L^\infty(\mathscr M \times \T^d \times [0, 1] )}
      %\sup_{(y, t) \in \T^d \times [0, 1]}\norm{\nabla_\xi \mathscr U(\cdot, y,t)}_{L^\infty(\mathscr M)}
    \,,
    \\
    d_\infty( \xi, \xi' ) &\defeq \sup_{k \leq n} d_{\mathscr M}( \xi_k, \xi_k' ) \,.
  \end{align}
\end{lemma}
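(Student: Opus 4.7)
The plan is to fix a realization of $W$ and exploit the fact that, since the diffusion coefficient $\sqrt{2\kappa}$ is constant, the stochastic integrals in the two SDEs driven by $\mathscr U_n(\xi, \cdot, \cdot)$ and $\mathscr U_n(\xi', \cdot, \cdot)$ cancel upon subtraction. What remains is a purely integral (in time) equation for the difference of the two flows, to which a pathwise Gr\"onwall argument applies deterministically.

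Concretely, writing $x = (x_1, x_2) \in \T^{d, (2)}$ and working in the lifts to $\R^d$, I would let $Z^i_t$ and $\tilde Z^i_t$ denote the solutions to~\eqref{e:SDEX} with initial condition $x_i$ using the same Brownian motion $W$ but driven by $\mathscr U_n(\xi, \cdot, \cdot)$ and $\mathscr U_n(\xi', \cdot, \cdot)$, respectively. Subtraction gives
\begin{equation}
Z^i_t - \tilde Z^i_t = -\int_0^t \bigl[\mathscr U_n(\xi, Z^i_s, s) - \mathscr U_n(\xi', \tilde Z^i_s, s)\bigr]\, ds,
\end{equation}
which I would split via the triangle inequality after inserting $\pm\, \mathscr U_n(\xi, \tilde Z^i_s, s)$. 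The first piece is controlled by $A_1 |Z^i_s - \tilde Z^i_s|$ using Assumption~\ref{a:flow}, and the second, on each subinterval $[k, k+1)$, is bounded by $A_3\, d_{\mathscr M}(\xi_{k+1}, \xi'_{k+1}) \leq A_3\, d_\infty(\xi, \xi')$ via integration of $\nabla_\xi \mathscr U$ along a minimizing geodesic in $\mathscr M$, which exists by completeness.

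Setting $f(t) \defeq |Z^i_t - \tilde Z^i_t|$, the two bounds combine to $f(t) \leq A_1 \int_0^t f(s)\, ds + A_3 n\, d_\infty(\xi, \xi')$ for $t \leq n$, so Gr\"onwall's inequality yields $f(n) \leq A_3 n e^{A_1 n}\, d_\infty(\xi, \xi')$. The elementary bound $n \leq e^n$ then absorbs the linear factor into the exponential to give $A_3 e^{(1+A_1)n}\, d_\infty(\xi, \xi')$. Applying this to both $i = 1, 2$ and using that the distance on $\T^{d,(2)}$ is dominated by the $\R^d$-distance of lifts coordinatewise gives~\eqref{eq:lipschitz-dependence-on-u}. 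The argument poses no serious obstacle; the key structural observation is simply that the constant diffusion coefficient allows the Brownian increments to cancel pathwise, reducing the entire bound to a deterministic ODE comparison handled by Gr\"onwall.
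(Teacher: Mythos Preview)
Your proposal is correct and is precisely the argument the paper has in mind: the paper's proof is a one-line pointer to Gr\"onwall's inequality and the analogous computation in Lemma~\ref{l:gronwall-difference}, and you have supplied exactly those details, including the key observation that the constant diffusion coefficient makes the Brownian increments cancel pathwise. The only cosmetic point is that depending on which product metric one uses on $\T^{d,(2)}$ a harmless factor of $2$ or $\sqrt 2$ may appear when combining the two coordinate estimates, but this is irrelevant for how the lemma is used.
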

\begin{proof}
  The proof follows immediately from Gr\"onwall's inequality and is very similar to the proof of Lemma~\ref{l:gronwall-difference}.
\end{proof}

With these lemmas, we can prove the existence of a $\kappa$-independent small set.
\begin{proof}[Proof of Lemma~\ref{l: stablesmallset}]
  Let $n, x_*, \xi_*, \epsilon, c, \rho_n$ be defined as in Assumption~\ref{a:submersion-2point}. Then, we see that $\mathscr X_n^{(2)} (\cdot, x_*)$ satisfies the assumption in Lemma~\ref{l:submersionPushForward} with $r=\epsilon$.
  Let $\delta, s$ be the constants given in Lemma~\ref{l:submersionPushForward} for the map $\mathscr X_n^{(2)}(\cdot, x_*)$. Now, choose $\alpha>0$ such that $\P[\check W_n^* \leq \alpha]\geq \frac{1}{2}$ and let $\eta,\kappa$ be small enough so that
  \begin{equation}
    (\eta +n\sqrt{\kappa}\alpha ) e^{nA_1} < \frac{1}{\sqrt{2}}\delta
    \,.
  \end{equation}
  Then for each choice of $x\in B_\eta(x_*)$ and a realization of Brownian path such that $\{\check W_n^* \leq \alpha\}$, we have
\begin{equation}
d(X_n^{\kappa, (2)}(\xi, x), X_n^{0, (2)}(\xi, x_*)) < \delta\,,
\end{equation}
for any $\xi \in \mathscr M^n$.
Thus, we can apply  Lemma~\ref{l:submersionPushForward} to the map
$g:\xi \mapsto X_n^{\kappa, (2)}(\xi, x)$ and see that
\begin{align}
\P_\kappa^{(2), n}(x, A) &\geq \E_W\Big[\paren[\Big]{\int_{g^{-1}(A)\cap B_\epsilon(\xi_*)}\rho_n(\xi)d\xi}\one_{\set{W_n^*\leq \alpha}}\Big]\\
&\geq c \E_W\Big[\abs{g^{-1}(A)\cap B_\epsilon(\xi_*)}\one_{\set{W^*_n \leq \alpha}}\Big]\\
\label{e:P2n-kappa}
&\geq \frac{c}{2} \abs{A \cap B_s(\mathscr X(\xi_*, x_*))} \E_W \Lip(g)^{-\dim \mathscr M}
\,.
\end{align}
Using Lemma~\ref{l:LipDependenceOnU} we see
\begin{equation}\label{e:EWLipG}
  \E_W \Lip(g)^{-\dim \mathscr M}
    \geq (A_3\exp(n(1+A_1)))^{-\dim\mathscr M}
  \,.
\end{equation}
Using~\eqref{e:EWLipG} in~\eqref{e:P2n-kappa} yields~\eqref{e:stablesmallsetconclusion} as desired.
\end{proof}

\section{Verification of the Harris Assumptions (Lemma \ref{l:harrisassumptions}).}\label{s:verify-harris-assumptions}

Given the Lyapunov function~$V$ from Lemma~\ref{l:LyapunovKappa} and the small set from Lemma~\ref{l: stablesmallset}, we will now prove Lemma~\ref{l:harrisassumptions} verifying Harris conditions for the two point chains, for all sufficiently small~$\kappa \geq 0$.

\begin{proof}[Proof of Lemma~\ref{l:harrisassumptions}]
  We first note that Lemma~\ref{l:LyapunovKappa} and induction immediately imply that for every $l \in \N$,
  \begin{equation}\label{e:iterated Lyapunov bound}
    P_\kappa^{(2), l}V \leq \gamma_{1}^l V + b_l \,,
  \end{equation}
  where
  \begin{gather}\label{e:bm}
    b_l \defeq b\sum_{i=0}^{l-1}\gamma_1^i = b \frac{1-\gamma_    1^l}{1-\gamma_1} \,.
  \end{gather}
  We will now show that~$R$ and~$l$ can be chosen so that ~\eqref{e:V-sublevel-small} is also satisfied.

  We will first show that any compact set $A \subset \T^{d, (2)}$ is a small set, uniformly in~$\kappa$.
  %with the $\kappa$-independent number of steps and the $\kappa$-independent lower bound measure.
  More precisely, we will prove that for any compact set $A\subset \T^{d, (2)}$, there exist $m\in \N$, $\beta\in (0,1)$, and a probability measure $\mu$, all independent of $\kappa$, such that
  \begin{equation}\label{e:Asmall}
    \inf_{x\in A} P_\kappa^{(2),m}(x, \cdot) \geq \beta\mu(\cdot)\,,
  \end{equation}
  for all sufficiently small $\kappa\geq 0$.
  Next, we will show that for any $R>\frac{2b}{1-\gamma_1}$, there exists a compact set $S$ such that
  \begin{equation} \label{e:V-sublevel-compact}
  \set{V\leq R} \subset S \,.
  \end{equation}

  To see why the above implies~\eqref{e:V-sublevel-small}, we first fix any $R>\frac{2b}{1-\gamma_1}$ and then find a compact set $S$ such that~\eqref{e:V-sublevel-compact} holds.
  From~\eqref{e:Asmall}, we can find $\kappa$-independent constants $l, \alpha$ and a probability measure $\nu$ such that for all sufficiently small $\kappa\geq 0$, we have
  \begin{equation} \label{e:Ssmall}
  \inf_{x\in S} P_\kappa^{(2),l}(x, \cdot) \geq \alpha\nu(\cdot) \,.
  \end{equation}
  For this particular $l\in \N$, we define
  \begin{equation}
    \gamma_3 \defeq \gamma_{1}^l,
    \quad\text{and}\quad
    K \defeq b_l \,.
  \end{equation}
  Using~\eqref{e:bm} we observe
  \begin{equation}
    R > \frac{2b}{1-\gamma_1} = \frac{2b}{1-\gamma_1^l} \frac{1-\gamma_1^l}{1-\gamma_1}
    = \frac{2K}{1 - \gamma_3}\,,
  \end{equation}
  which implies~\eqref{e:V-sublevel-small} as claimed.
  \smallskip

  \GI[2024-02-06]{Maybe give a reference for $T$-chain}
  It remains to prove~\eqref{e:Asmall} and~\eqref{e:V-sublevel-compact}.
To prove~\eqref{e:Asmall}, we first note that Assumptions~\ref{a:fts} and~\ref{a:submersion-2point}  imply $P_0^{(2)}$ is an $\psi^{(2)}$-irreducible, aperiodic $T$-chain with the property that
\begin{equation}
\psi^{(2)}(\mathcal V)>0, \quad \text{for all }  \mathcal V \subset \T^{d, (2)} \text{ open}\,.
\end{equation}
\SJ[2024-02-05]{Multiple reference here: \cite{BlumenthalCotiZelatiEA22} and~\cite{MeynTweedie09}. What would be a good way to mention them?}
\SJ[2023-08-15]{Define $\psi^{(2)}$, irreducible, aperiodic, T-chain, petite}
Assumption~\ref{a:submersion-2point} and Lemma~\ref{l: stablesmallset} imply that there exist open balls $B_r, B_
s \subset \T^{d, (2)}$ and $n\in\N$ such that for all sufficiently small $\kappa\geq 0$,
\begin{equation} \label{e:Usmall0}
\inf_{x\in B_r} P_\kappa^{(2),n}(x,\cdot)\geq \tilde \mu(\cdot)\,,
\end{equation}
where
$B_r$ and $B_s$ are open balls with radius $r, s>0$, respectively, and
\begin{equation}
  \tilde{\mu}(\cdot) \defeq \Leb(\cdot \cap B_s) \,.
\end{equation}

$B_{\frac{1}{2}r}$ is small for the chain $P_0^{(2)}$ and $\psi^{(2)}(B_{\frac{1}{2}r})>0$ so by Theorem 6.2.5 (ii) and Theorem 5.5.7 in \cite{MeynTweedie09}, we see that there exist some $q\in \N$ and $c>0$ such that
\begin{equation} \label{e:Asmall0forU}
\inf_{x\in A} P_0^{(2),q}(x,B_{\frac{1}{2}r}) \geq c > 0 \,.
\end{equation}

Then, for each $x\in A$, the measure $P_\kappa^{(2), q}(x,\cdot)$ converges weak-* to the measure $P_0^{(2), q}(x,\cdot)$ as $\kappa \to 0$ so there exists $\kappa_0(x)>0$ such that
\begin{equation}\label{e: qkappa0}
\inf_{\kappa < \kappa_0(x)}P_\kappa^{(2), q}(x, B_{\frac{1}{2}r}) \geq \frac{1}{2}P_0^{(2), q}(x, B_{\frac{1}{2}r})\,.
\end{equation}
Also, if we assume $r<1$ without loss of generality and define $A_1$ as in~\eqref{e: A1}, then for any $x\in \T^d$ and $y \in B(x, \frac{1}{2}\exp(-A_1q)r)$, we can find $\check x, \check y \in \R^d$ such that $[\check x] = x$, $[\check y] = y$, and $d(x,y)=\abs{\check x - \check y}$, and use simple Gr\"{o}wnwall bound to notice that
\begin{equation}
d(X_q^\kappa (x), X_q^\kappa(y)) = \abs{\check X_q^\kappa(\check x)-\check X_q^\kappa(\check y)} \leq \exp(A_1 q) \abs{\check x-\check y}\leq \frac{1}{2}r\,.
\end{equation}
This immediately leads to the inequality
\begin{equation}\label{e: kappaqxy}
P_\kappa^{(2),q}(y, B_r) \geq P_\kappa^{(2), q}(x, B_{\frac{1}{2}r})\,.
\end{equation}
Now, we cover the compact set $A$ with open balls $\cup_{x\in A}B(x, \frac{1}{2}\exp(-A_1q)r)$ and find a finite cover $\bigcup_{i=1}^N B(x_i, \frac{1}{2}\exp(-A_1q)r)$. If we let $\kappa < \kappa_0 \defeq \min_{i=1}^N \kappa_0(x_i)$ and $y \in A$, then by~\eqref{e: kappaqxy},~\eqref{e: qkappa0}, and~\eqref{e:Asmall0forU}, we see that
\begin{equation}
P_\kappa^{(2), q}(y, B_r)\geq \frac{1}{2}c \,.
\end{equation}
This implies for any $0\leq \kappa<\kappa_0$,
\begin{equation} \label{e: Aqkappa}
\inf_{x\in A}P_\kappa^{(2), q}(x, B_r) \geq \frac{1}{2}c\,.
\end{equation}

Defining $m\defeq n+q$ and using~\eqref{e:Usmall0} with~\eqref{e: Aqkappa} yields
\begin{equation}\label{e:P2KappaLower}
  \inf_{x\in A}P_\kappa^{(2), m}(x,\cdot) \geq  \frac{1}{2}c \tilde{\mu}(\cdot)
\end{equation}
for all sufficiently small $\kappa\geq 0$.
Then, normalizing $\frac{1}{2}c \tilde{\mu}(\cdot)$ immediately implies~\eqref{e:Asmall}, as desired.
\smallskip

Finally, to prove~\eqref{e:V-sublevel-compact}, we notice that if $s''<s_*$ and $(x,x')\in \Delta(s'')$, we have
\begin{equation}
  V(x,x')=d(x,x')^{-p}\psi_p(x, \widehat{w}(x,x')) \geq (s'')^{-p} \paren[\Big]{ \inf_{\text{SM}}\psi_p} > 0  \,.
\end{equation}
Thus making $s''>0$ sufficiently small will ensure
\begin{equation}
\Delta(s'') \subset \{V>R\}\,,
\end{equation}
which implies
\begin{equation}
\{V\leq R\} \subset \Delta(s'')^c \,.
\end{equation}
Thus $S\defeq \Delta(s'')^c$ is the desired compact set, proving~\eqref{e:V-sublevel-compact}.
This concludes the proof.
\end{proof}

\section{V-geometric ergodicity (Lemma~\ref{l: uge}).}\label{s:uge}

Given Lemma~\ref{l:harrisassumptions}, $V$-geometric ergodicity of the two point chains $P_\kappa^{(2)}$ follows directly from a theorem of Harris~\cite{Harris55,MeynTweedie09}.
%with $\kappa$-independent rate. We will prove Lemma~\ref{l: uge} using an argument of Harris~\cite{MeynTweedie09, HairerMattingly11}, which leverages a Lyapunov function and a small set.
The usual Harris theorem, however,  isn't quantitative enough to yield~\eqref{e: ugeGeneral} with $\kappa$-independent constants $C, \beta$.
We will instead use the version in~\cite{HairerMattingly11} which is quantitative and can be used to prove Lemma~\ref{l: uge}.

For the proof, we define the metric~$\rho_\beta$ by
\begin{equation}\label{e: rho-def-2}
  \rho_\beta(\mu_1, \mu_2) \defeq \int_{\T^{d, (2)}} (1+\beta V) \, d\abs{\mu_1-\mu_2}
  \,,
\end{equation}
where $\beta \geq 0$, $\mu_1, \mu_2$ are probability measures, and~$\abs{\mu_1 - \mu_2}$ denotes the variation of the signed measure~$\mu_1 - \mu_2$.
The quantitative Harris theorem from~\cite{HairerMattingly11} shows that $P^{(2), l}_\kappa$ is a contraction under~$\rho_\beta$, and for readers convenience we now restate this result in our context.

\begin{theorem}[Theorem 1.3 in~\cite{HairerMattingly11}]\label{t: harris}
  Make the same assumptions as in Le\-mma~\ref{l:harrisassumptions}.
  For any~$\alpha_0 \in (0, \alpha)$, and any~$\gamma_0 \in (\gamma_3 + \frac{2K}{R}, 1)$, define
  \begin{equation}
    \beta \defeq \frac{\alpha_0}{K}
    \,,
    \qquad
    \bar \alpha \defeq \max\set[\Big]{ 1 - (\alpha - \alpha_0), \frac{2 + R \beta \gamma_0}{2 + R\beta} }\,.
  \end{equation}
  Then, for any two probability measures~$\mu_1, \mu_2$ we have
  \begin{equation}\label{e:rho-contraction}
  \rho_\beta(\mu_1 P^{(2), l}_\kappa, \mu_2 P^{(2),l}_\kappa) \leq \bar \alpha \rho_\beta(\mu_1, \mu_2)\,.
  \end{equation}
\end{theorem}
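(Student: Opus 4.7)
The plan is to follow the coupling-based argument of Hairer and Mattingly~\cite{HairerMattingly11}. Writing $P \defeq P_\kappa^{(2), l}$, the first step is to observe that $\rho_\beta$ is exactly the Wasserstein-$1$ distance associated to the semi-metric
\begin{equation*}
  d_\beta(x, y) \defeq (2 + \beta V(x) + \beta V(y)) \, \one_{\set{x \neq y}}
\end{equation*}
on $\T^{d, (2)}$. By joint convexity of $(\mu_1, \mu_2) \mapsto \rho_\beta(\mu_1 P, \mu_2 P)$ together with Kantorovich--Rubinstein duality, \eqref{e:rho-contraction} reduces to producing, for each $x \neq y$, a coupling $(X, Y)$ of $\delta_x P$ and $\delta_y P$ satisfying $\E\, d_\beta(X, Y) \leq \bar\alpha\, d_\beta(x, y)$. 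Setting $V_{xy} \defeq V(x) + V(y)$, I would split into two cases depending on the size of $V_{xy}$ relative to $R$.

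In the Lyapunov regime $V_{xy} \geq R$, take $X, Y$ independent with laws $\delta_x P, \delta_y P$. The iterated drift inequality~\eqref{e:Lyapunov-bound-multistep} and the relation $\beta K = \alpha_0$ give
\begin{equation*}
  \E\, d_\beta(X, Y) \leq 2 + \beta(\gamma_3 V_{xy} + 2K) = 2(1 + \alpha_0) + \beta \gamma_3 V_{xy}.
\end{equation*}
A direct computation shows that the ratio of this quantity to $d_\beta(x,y) = 2 + \beta V_{xy}$ is monotone decreasing in $V_{xy}$, so it attains its supremum over $\set{V_{xy} \geq R}$ at $V_{xy} = R$. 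The assumption $\gamma_0 > \gamma_3 + 2K/R$ then yields
\begin{equation*}
  \frac{2(1 + \alpha_0) + \beta \gamma_3 R}{2 + \beta R} \leq \frac{2 + R\beta\gamma_0}{2 + R\beta} \leq \bar\alpha,
\end{equation*}
closing this case.

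In the small-set regime $V_{xy} < R$, both $V(x), V(y) \leq R$ (recall $V \geq 1$), so the minorization~\eqref{e:V-sublevel-small} applies at both points. Decompose
\begin{equation*}
  P(z, \cdot) = \alpha\, \nu(\cdot) + (1 - \alpha)\, Q(z, \cdot), \qquad z \in \set{V \leq R},
\end{equation*}
and couple by setting $X = Y \sim \nu$ with probability $\alpha$, and $X \sim Q(x, \cdot)$, $Y \sim Q(y, \cdot)$ independently with the complementary probability $1 - \alpha$. Since $(1-\alpha) Q V(z) = P V(z) - \alpha \int V \, d\nu \leq P V(z)$, this gives
\begin{equation*}
  \E\, d_\beta(X, Y) \leq 2(1 - \alpha) + \beta(1-\alpha)(QV(x) + QV(y)) \leq 2(1 - \alpha) + \beta(\gamma_3 V_{xy} + 2K),
\end{equation*}
which using $\beta K = \alpha_0$ equals $2(1 - (\alpha - \alpha_0)) + \beta \gamma_3 V_{xy}$, and is bounded by $\bar\alpha (2 + \beta V_{xy})$ because $1 - (\alpha - \alpha_0) \leq \bar\alpha$ and $\gamma_3 \leq (2 + R\beta\gamma_0)/(2 + R\beta) \leq \bar\alpha$.

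The bulk of the work is the constant-chasing in Case~1: one must verify that the excess $2\alpha_0$ introduced by the Lyapunov drift is exactly compensated by the gap $\gamma_0 - \gamma_3 - 2K/R > 0$, which is precisely what fixes the form of $\beta$ and of the second entry in the maximum defining $\bar\alpha$. The coupling construction in Case~2 and the reduction via Kantorovich--Rubinstein duality are routine.
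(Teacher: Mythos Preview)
Your argument is correct and is precisely the coupling proof of Hairer--Mattingly; note that the paper does not actually reprove this statement but simply quotes it from~\cite{HairerMattingly11}, so there is nothing further to compare against here.
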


Referring to~\cite{HairerMattingly11} for the proof of Theorem~\ref{t: harris}, we will now prove Lemma~\ref{l: uge}.

\begin{proof}[Proof of Lemma~\ref{l: uge}]
We will first show there exist constants $C>0$, $\gamma_2 \in (0,1)$, and $l\in \N$ such that for all sufficiently small $\kappa\geq 0$, any $\varphi:\T^{d, (2)}\to \R$ such that $\norm{\varphi}_V < \infty$, and any $m\in \N$, we have
\begin{equation}\label{e: uge}
    \norm[\Big]{ P_\kappa^{(2), lm} \varphi-\int \varphi \, d\pi^{(2)} }_V
      \leq C\gamma_2^m \norm[\Big]{ \varphi-\int \varphi \, d\pi^{(2)}}_V\,.
\end{equation}

For this, we first define a weighted norm
\begin{equation}
  \|\varphi\|_\beta \defeq \sup_x \frac{\abs{\varphi(x)}}{1+\beta V(x)}
  \,.
\end{equation}
Then, since $V\geq 1$ on $\T^{d, (2)}$, we see that the norms $\|\cdot\|_V$ and $\|\cdot\|_\beta$ are equivalent for any $\beta>0$ with
\begin{equation} \label{e: eqnorm}
\frac{1}{1+\beta}\|\varphi\|_V\leq \|\varphi\|_\beta \leq \frac{1}{\beta} \|\varphi\|_V\,.
\end{equation}
We also note that given any two probability measures $\mu_1$ and $\mu_2$ on $\T^{d, (2)}$,
\begin{gather}\label{e: rho-def}
\rho_\beta (\mu_1, \mu_2) = \sup_{\|\varphi\|_\beta \leq 1} \langle \mu_1-\mu_2, \varphi \rangle
\end{gather}
always holds, where $\ip{\mu, \varphi}$ denotes the dual pairing
\begin{equation}
  \ip{\mu, \varphi} \defeq \int_{\T^{d, (2)}} \varphi \, d\mu \,.
\end{equation}
\smallskip

Now, we're ready to prove~\eqref{e: uge}. From here on, we set $P = P_\kappa^{(2),l}$ for notational simplicity.
By Lemma~\ref{l:harrisassumptions} and Theorem~\ref{t: harris} we obtain the contraction estimate~\eqref{e:rho-contraction}.
In particular, for any $x\in \T^{d, (2)}$ and $\mu_1 \defeq \delta_x$, $\mu_2\defeq \pi^{(2)}$, we have
\begin{equation}
\rho_\beta(\delta_x P^n , \pi^{(2)}) \leq \bar{\alpha}^n \rho_\beta(\delta_x, \pi^{(2)})\,.
\end{equation}
By using~\eqref{e: rho-def} and then~\eqref{e: rho-def-2}, we notice
\begin{gather}
\frac{\abs{\langle \delta_x P^n - \pi^{(2)}, \varphi - \langle \pi^{(2)}, \varphi \rangle \rangle}}{\|\varphi - \langle \pi^{(2)}, \varphi \rangle\|_\beta} \leq \bar{\alpha}^n\rho_\beta(\delta_x, \pi^{(2)}) \leq \bar{\alpha}^n (1+\beta V(x) + \langle \pi^{(2)}, 1+\beta V \rangle)\,,
\end{gather}
and hence
\begin{gather}
\frac{\abs[\Big]{(P^n\varphi)(x)-\langle \pi^{(2)}, \varphi \rangle}}{1+\beta V(x)} \leq \bar{\alpha}^n(1+\langle \pi^{(2)}, 1+\beta V \rangle)\|\varphi - \langle \pi^{(2)}, \varphi \rangle\|_\beta\,.
\end{gather}
This holds for any $x\in \T^{d, (2)}$ so
\begin{equation}
\norm[\Big]{ P^n \varphi- \langle \pi^{(2)}, \varphi \rangle}_\beta
      \leq C_\beta \bar{\alpha}^n \|\varphi - \langle \pi^{(2)}, \varphi \rangle\|_\beta\,.
\end{equation}
Finally, using~\eqref{e: eqnorm} yields~\eqref{e: uge} where $C$ and $\gamma_2$ depend on $\alpha, \gamma_3, K, R$ but not on $\kappa$. This completes the proof for~\eqref{e: uge}.

The proof that~\eqref{e: uge} implies~\eqref{e: ugeGeneral} is a standard argument.
%Now, we prove~\eqref{e: uge} implies the Proposition.
For any $x\in \T^{d, (2)}$ and any mean~$0$ function~$g$ we note
%$g\defeq \varphi - \int \varphi d\pi^{(2)}$,
\begin{gather*}
	\abs[\Big]{\frac{(P_\kappa^{(2)}g)(x)}{V(x)}} = \abs[\Big]{\int_{\T^{d, (2)}}\frac{g(y)}{V(y)}\frac{V(y)}{V(x)}P_\kappa^{(2)}(x,dy)} \leq \|g\|_V\frac{(P_\kappa^{(2)}V)(x)}{V(x)} \leq (\gamma_1+b)\|g\|_V
	\,,
\end{gather*}
where $\gamma_1$ and $b$ are the constants defined in~\eqref{e:LyapunovKappa}.
Thus,
\begin{equation}
\norm[\big]{P_\kappa^{(2)}g}_V \leq (\gamma_1+b)\|g\|_V\,.
\end{equation}
This and~\eqref{e: uge} imply that for any $m\in \N$, $0\leq r < l$, \begin{equation}
\norm[\Big]{ P_\kappa^{(2), lm+r} g}_V
\leq C\max(\gamma_1+b,1)^r \gamma_2^m \norm{g}_V \leq C  \gamma_2^m \norm{g}_V\,,
\end{equation}
which proves
\begin{equation}
\norm[\Big]{ P_\kappa^{(2), n} g}_V
\leq C (\gamma_2^\frac{1}{l})^n\|g\|_V
\end{equation}
for general $n\in \N$ with possibly different constants $C$ in each line.
This completes the proof of Lemma~\ref{l: uge}.
\end{proof}

\section{Exponential Mixing of the Stochastic Flows (Lemma \ref{l:X-exp-mix}).}\label{s:bc}

In general, the geometric ergodicity of the two-point chain implies almost sure exponential mixing of the random dynamical system.
To the best of our knowledge this principle was introduced in~\cite{DolgopyatKaloshinEA04} and was also used in~\cite{BedrossianBlumenthalEA22,BlumenthalCotiZelatiEA22}.
%This principle has been frequently used in papers such as \cite{DolgopyatKaloshinEA04,BedrossianBlumenthalEA22,BlumenthalCotiZelatiEA22}. Given Lemma~\ref{l:LyapunovKappa} and Lemma~\ref{l: uge}, our proof of Proposition~\ref{p:X-exp-mix} is very similar to that of Proposition~4.6 in~\cite{BlumenthalCotiZelatiEA22}.
We reproduce it here keeping track of the constants introduced in the proof and their dependence on $\kappa$ in order to prove that $\gamma$ in \eqref{e:X-exp-mix-disc-time} is $\kappa$-independent.

\begin{proof}[Proof of Lemma~\ref{l:X-exp-mix}]

 Let $\Z_0^d \defeq \Z^d - \set{0}$  and denote $\{e_m: m\in \Z^d\}$ as  the orthogonal basis $e_m(x) = e^{im\cdot x}$ for $L^2({\T^d})$ and denote
\begin{equation}
  f=\sum_{m\in \Z_0^d}\hat f_me_m \,,
  \quad
  g=\sum_{m\in \Z_0^d}\hat g_me_m
  \,,
\end{equation}
as the fourier expansions of $f$ and $g$. We note that  and $\hat f_0=\hat g_0=0$ as $f$ and $g$ are mean-zero.
\smallskip

Now, fix $\zeta>0$ and for $m, m' \in \Z_0^d$ and $\kappa>0$, define random variables
\begin{gather*}
N_{m,m'}^\kappa \defeq \max \set[\Big]{n \in \N ,
  ~\abs[\Big]{ \int e_m(x) e_{m'}\circ X^\kappa_{n}(x) \pi(dx) } > e^{-\zeta n}}\,, \\
K_\kappa \defeq \max\set[\Big]{\abs{m}\vee \abs{m'}: e^{\zeta N_{m,m'}^\kappa} > \abs{m}\abs{m'}}\,, \\
\widehat{D}_\kappa \defeq \max_{\abs{m}, \abs{m'}\leq K_\kappa} e^{\zeta N_{m, m'}^\kappa}\,.
\end{gather*}

Then by the definition of $N_{m,m'}^\kappa$ and Chebyshev, we get
\begin{align}
  \P[N_{m,m'}^\kappa > l] &\leq \sum_{n>l}\P\bigg[{~\abs[\Big]{ \int e_m(x) e_{m'}\circ X^\kappa_{n}(x) \pi(dx) } > e^{-\zeta n}}\bigg]\\
    &\leq \sum_{n > l} e^{2\zeta n} \E \abs[\Big]{ \int e_m(x) e_{m'}\circ X_n^\kappa(x) \pi(dx) }^2\,.
\end{align}
Observe
\begin{equation}
\E \abs[\Big]{ \int e_m(x) e_{m'}\circ X_n^\kappa(x) \pi(dx) }^2 = \int e_{m'}^{(2)}P^{(2),n}_\kappa e_m^{(2)}\,,
\end{equation}
where
\begin{gather}
e_m^{(2)}(x,y) \defeq e_m(x)\overline{e_m}(y)\,,\\
\pi^{(2)}(dx,dy) \defeq \pi(dx)\pi(dy) \,.
\end{gather}
From~\eqref{e: ugeGeneral}, we see
\begin{align}
  \abs[\Big]{ \int e_{m'}^{(2)}P^{(2),n}_\kappa e_m^{(2)} d\pi^{(2)}}
    &= \abs[\Big]{ \int \paren[\Big]{ e_{m'}^{(2)}P^{(2),n}_\kappa - \paren[\Big]{ \int e_{m'}^{(2)} d\pi^{(2)}}} e_m^{(2)} d\pi^{(2)}}
 \\
&\leq \int \abs[\Big]{e_{m'}^{(2)}P^{(2),n}_\kappa -\int e_{m'}^{(2)} d\pi^{(2)}} \, d\pi^{(2)}
\\
  &\leq Ce^{-\beta n}\|e_{m'}^{(2)}\|_V \int V d\pi^{(2)}
  = C_V e^{-\beta n}
  \,.
\end{align}
This implies
\begin{gather} \label{e: Nkk'}
\P[N_{m, m'}^\kappa > l] \leq C_V e^{(2\zeta-\beta) l}\,,
\end{gather}
provided
\begin{equation}
2\zeta-\beta < 0 \,. \label{e: zeta-beta}
\end{equation}
From now on, we make additional assumptions that $\zeta$ is small enough to satisfy
\begin{gather}
d+\frac{2\zeta-\beta}{\zeta}<0 \,, \label{e: d-zeta-beta}\\
\frac{1}{\zeta q} (2\zeta-\beta) + 1 < -1 \,, \label{e: q-zeta-beta}\\
\frac{5d}{2}+\frac{2\zeta-\beta}{\zeta} < -1 \label{e: d-zeta-beta2}\,.
\end{gather}
Equations~\eqref{e: Nkk'} and~\eqref{e: zeta-beta} imply that $\P( N_{m, m'}^\kappa < \infty ) = 1$ and we have the estimate
\begin{gather}
\abs[\Big]{ \int e_m(x) e_{m'}\circ X_n^\kappa(x) \pi(dx) } \leq e^{\zeta N_{m,m'}^\kappa - \zeta n}\,, \\
\text{ hence } \abs[\Big]{ \int f(x) g\circ X_n^\kappa(x) \pi(dx)} \leq e^{-\zeta n}\sum_{m, m'}\abs{\hat f_m}\abs{\hat g_{m'}}e^{\zeta N_{m,m'}^\kappa} \,. \label{e: corrFourier}
\end{gather}
Moreover, using~\eqref{e: Nkk'} and~\eqref{e: d-zeta-beta}, we observe that
\begin{align}
  \P[K_\kappa > l\big]
    &\leq 2 \sum_{m,m' \in \Z_0^d, \abs{m}>l} \P\bigg[e^{\zeta N_{m,m'}^\kappa} > \abs{m}\abs{m'}\bigg]
  \\
    &\leq 2 \sum_{m' \in \Z_0^2} \abs{m'}^\frac{2\zeta-\beta}{\zeta} \sum_{m \in \Z_0^2, \abs{m}>l} \abs{m}^\frac{2\zeta-\beta}{\zeta}
  \\
    &\lesssim \sum_{n=1}^\infty n^{\frac{2\zeta-\beta}{\zeta}+d-1} \sum_{n>l}^\infty n^{\frac{2\zeta-\beta}{\zeta}+d-1} \lesssim l^{d+\frac{2\zeta-\beta}{\zeta}}\,, \label{e: K tail bound}
\end{align}
where the constants in the inequalities are independent of $\kappa$.
Hence $\P( K_\kappa < \infty ) = 1$.

Noting that $$e^{\zeta N_{m,m'}^\kappa} \leq \widehat{D}_\kappa \abs{m}\abs{m'}$$ always holds, we conclude from~\eqref{e: corrFourier} that
\begin{gather}
	\abs[\Big]{\int f(x) g\circ X_n^\kappa(x) \pi(dx)} \leq  \widehat{D}_\kappa(\underline{\omega})e^{-\zeta n}\|f\|_{H^{\frac{d}{2}+2}} \|g\|_{H^{\frac{d}{2}+2}}\,.
\end{gather}
Finally, the same arguments in Lemma 7.1 and Section 7.3 of \cite{BedrossianBlumenthalEA22} show that for any $s, q>0$,
\begin{gather}
	\abs[\Big]{\int f(x) g\circ X_n^\kappa(x) \pi(dx)} \leq \widehat{D}_\kappa(\underline{\omega})e^{-(\frac{2s\zeta}{d+4}) n}\|f\|_{H^s} \|g\|_{H^s} \,.
\end{gather}
Moreover, using~\eqref{e: Nkk'} and~\eqref{e: K tail bound}, they show that
\begin{gather}
\E[\widehat{D}_\kappa^q] = \sum_{l=1}^\infty \E\big[1_{\{K_\kappa = l\}} \max_{\abs{m}, \abs{m'} \leq l} e^{\zeta q N_{m,m'}^\kappa} \big]\\
\leq \sum_{l=1}^\infty \P[K_\kappa = l]^\frac{1}{2} \norm[\Big]{\max_{\abs{m}, \abs{m'} \leq l} e^{\zeta q N_{m,m'}^\kappa}}_{L^2} \\
\lesssim \sum_{l=1}^\infty l^{d+\frac{2\zeta-\beta}{\zeta}}\bigg( \sum_{\abs{m}, \abs{m'} \leq l} \|e^{\zeta q N_{m,m'}^\kappa}\|_{L^2} \bigg)\\
\leq \paren[\Big]{1+\frac{\zeta q}{\beta - 2\zeta(1+q)}}^\frac{1}{2}\sum_{l=1}^\infty l^{\frac{5d}{2}+\frac{2\zeta-\beta}{\zeta}} < \infty \,,
\end{gather}
provided~\eqref{e: q-zeta-beta} and~\eqref{e: d-zeta-beta2}.
This completes the proof of the theorem with the choice of $\gamma\defeq \frac{2s\zeta}{d+4}$ which can be made independent of $\kappa$ since the conditions for $\zeta$, \eqref{e: zeta-beta}--\eqref{e: d-zeta-beta2}, are independent of $\kappa$.
\end{proof}

\section{An explicit Lyapunov function for Pierrehumbert flows.}\label{s:alt-shear-lyapunov}

It was proved in Section 5 of~\cite{BlumenthalCotiZelatiEA22} that the Pierrehumbert flows defined in Corollary~\ref{c:alt-shear} satisfy the assumptions~\ref{a:checkableConditions1}--\ref{a:checkableConditions4}. Then by Proposition~\ref{p:checkable-conditions}, we see that Assumption~\ref{a:fts}--\ref{a:submersion-2point} (and hence  Corollary~\ref{c:alt-shear}) must hold.
However, in the case of Pierrehumbert flows, we can explicitly construct a simple Lyapunov function and verify Assumption~\ref{a:Lyapunov} directly.
%and this lets us directly apply Theorem~\ref{t:EDIntro} to the RDS in order to deduce Corollary~\ref{c:alt-shear}.

\begin{proposition}\label{p:alt-shear-lyapunov}
  Consider the RDS of alternating shears defined in Corollary~\ref{c:alt-shear}.
  If the flow amplitude~$A$ (in~\eqref{e:alt-shear-def}) is sufficiently large, then
  there exists~$s_*, p > 0$ such that the function~$V$ defined by
  \begin{equation}
    V(x, y) \defeq |x- y|_\infty^{-p} \text{ on }\Delta(s_*)\,,
  \end{equation}
  and extended continuously to~$\T^{d, (2)}$ is a Lyapunov function that satisfies Assumption~\ref{a:Lyapunov}.
  (Here $\abs{z}_\infty = \max_i \abs{z_i}$.)
\end{proposition}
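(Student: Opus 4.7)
The plan is to check the drift inequality~\eqref{e:LyapunovEpEq0} for this explicit $V$ by direct lifted computation, interpreting one step of the chain as the full period $[0,2]$ of the velocity field, i.e., a horizontal shear $H_{\zeta_1}$ followed by a vertical shear $V_{\zeta_2}$. (A single shear cannot possibly produce contraction, since a horizontal shear leaves $\delta_2$ unchanged, so $V$ is invariant whenever $\delta_2=0$.) I would first choose $s_*>0$ so small, depending on $A$, that for every $(x,y)\in\Delta(s_*)$ the Euclidean lifts of both $H_{\zeta_1}(y)-H_{\zeta_1}(x)$ and $V_{\zeta_2}H_{\zeta_1}(y)-V_{\zeta_2}H_{\zeta_1}(x)$ stay inside $(-1/2,1/2)^2$, so that no wrap-around occurs and the torus $\ell^\infty$-distance agrees with $\abs{\cdot}_\infty$ of the lift throughout the two-shear step.

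Writing $\delta=\check y-\check x$ and using the identity $\sin(2\pi(y_i-\zeta))-\sin(2\pi(x_i-\zeta))=2\cos(\pi(x_i+y_i-2\zeta))\sin(\pi\delta_i)$, the step map is exactly
\begin{equation}\label{e:plan-delta-prime}
  \delta' = (\delta_1+\alpha\delta_2,\ \delta_2+\beta(\delta_1+\alpha\delta_2))\,,
\end{equation}
where $\alpha=A'\cos\theta_1$ and $\beta=B'\cos\theta_2$ with $A',B'=2\pi A(1+O(\abs{\delta}_\infty^2))$, and $\theta_1,\theta_2$ are uniform on $[0,2\pi]$ (independence of $\theta_2$ from $\zeta_1$ comes from the translation-invariance of the $\zeta_2$-average of a $1$-periodic function). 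Dividing through by $\abs{\delta}_\infty$ and setting $v=\delta/\abs{\delta}_\infty$, the drift inequality reduces to
\begin{equation}\label{e:plan-goal}
  \sup_{\abs{v}_\infty=1}\E\abs{Mv}_\infty^{-p}<\tilde\gamma\,,\qquad M=\begin{pmatrix}1 & \alpha\\ \beta & 1+\alpha\beta\end{pmatrix}\,.
\end{equation}

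The central analytic input is the uniform estimate, valid for $p\in(0,1/2)$,
\begin{equation}\label{e:plan-uniform-alpha}
  \sup_{c\in\R}\E\abs{c+\alpha}^{-p}\leq C_pA^{-p}\,,
\end{equation}
which I would obtain from the substitution $\alpha=2\pi A\sin\theta$ and the observation that $\int_{-\pi/2}^{\pi/2}\abs{c_0+\sin\theta}^{-p}\,d\theta$ is bounded uniformly in $c_0\in\R$ precisely when $p<1/2$ (the cutoff comes from the worst case $\abs{c_0}\to1$, where the zero of $c_0+\sin\theta$ coincides with a critical point of $\sin$). With~\eqref{e:plan-uniform-alpha} in hand, I would split on whether $\abs{v_2}\geq 1/2$ or $\abs{v_2}<1/2$. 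In the first case $\abs{Mv}_\infty^{-p}\leq\abs{v_1+\alpha v_2}^{-p}$, and~\eqref{e:plan-uniform-alpha} applied with $c=v_1/v_2$ closes the estimate directly. In the second case $\abs{v_1}=1$, so I would bound $\abs{Mv}_\infty^{-p}\leq\abs{v_2+\beta(1+\alpha v_2)}^{-p}$, factor $\abs{1+\alpha v_2}^{-p}$ out of the $\beta$-integral via~\eqref{e:plan-uniform-alpha}, and apply~\eqref{e:plan-uniform-alpha} a second time to the residual $\alpha$-integral $\E_\alpha\abs{1+\alpha v_2}^{-p}$. The main obstacle is the uniform control of this last residual across all scales of $v_2$: when $A\abs{v_2}\gtrsim 1$ a scaled version of~\eqref{e:plan-uniform-alpha} gives $O((A\abs{v_2})^{-p})$; when $A\abs{v_2}\ll 1$ the quantity $\abs{1+\alpha v_2}$ stays bounded away from $0$ and the integral is $O(1)$; the transitional regime $A\abs{v_2}\asymp 1$ is again handled by the $p<1/2$ condition, via the same endpoint-vs-critical-point computation. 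Combining the cases yields $\sup_v\E\abs{Mv}_\infty^{-p}\leq C_pA^{-p}$, which is smaller than any prescribed $\tilde\gamma\in(0,1)$ once $A$ is sufficiently large. The companion function $\psi(x,w)=\abs{w}_\infty^{-p}$ appearing in~\eqref{e: LyapunovV} is continuous and strictly positive on $S\T^2$ since unit Euclidean vectors satisfy $\abs{w}_\infty\in[1/\sqrt 2,1]$, and extending $V$ to all of $\T^{d,(2)}$ by $V\defeq\max(\abs{x-y}_\infty^{-p},1)$ guarantees $V\geq 1$ globally and completes the verification of Assumption~\ref{a:Lyapunov}.
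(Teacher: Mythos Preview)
Your argument is correct and arrives at the same Lyapunov function and the same two-shear reduction as the paper, but the core estimate is organized differently. The paper proceeds by a good-event/bad-event split: in each of the two cases it isolates a ``bad'' event (where the relevant coordinate of the displacement fails to grow by a fixed factor), bounds its probability by $CA^{-1}$ or $CA^{-1/2}$ via an elementary measure estimate on the zero set of a shifted sine, and on the bad event invokes the crude Lipschitz bound $V(\Phi_2(x),\Phi_2(y))\leq (CA^2)^p V(x,y)$. Balancing $(CA^2)^p\cdot CA^{-1/2}+2^{-p}<1$ forces $p<\tfrac14$. You instead compute the full moment $\E|Mv|_\infty^{-p}$ via the uniform estimate $\sup_{c}\E|c+\alpha|^{-p}\leq C_pA^{-p}$, which is the analytic heart of your proof and exploits more of the structure of $\alpha=A'\cos\theta$ than a mere tail bound. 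This buys you a sharper conclusion ($\E V(\Phi_2 x,\Phi_2 y)\leq C_pA^{-p}V(x,y)$, valid for all $p<\tfrac12$) at the cost of the additional integral lemma; the paper's argument is slightly cruder but entirely elementary. Your treatment of the residual $\E_\alpha|1+\alpha v_2|^{-p}$ across the scales $A|v_2|\ll1$, $\asymp1$, $\gg1$ is the right decomposition and is uniformly $O(1)$, as you claim; the conditional uniformity of $\theta_2\pmod{2\pi}$ given $\zeta_1$ that you invoke is also correct. One minor point: your correction to $B'$ is $1+O(|\delta_1'|^2)=1+O(A^2|\delta|^2)$ rather than $1+O(|\delta|^2)$, so $s_*$ must be taken small relative to $A^{-1}$, but you already allow $s_*$ to depend on $A$, so this is harmless.
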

\begin{proof}
In this proof, we use $C$ as a generic constant that doesn't depend on $A$ or $\kappa$.
For a given $x\in \T^2$, we write $x_1, x_2$ as the first and second coordinates of $x$, respectively.
Let $u(x, t)$ be defined as in Corollary~\ref{c:alt-shear}.

First, we note that for some small $s_*\in (0,\frac{1}{2})$ and any $x, y \in \Delta(s_*)$, we can find $\check x, \check y\in \R^2$ such that $[\check x]=x, [\check y]=y$, and
\begin{equation}
d(X_2^0(x), X_2^0(y))=\abs{\check X_2^0(\check x) - \check X_2^0(\check y)} \leq CA^2\abs{\check x - \check y} = CA^2 d(x,y) < \frac{1}{2}\,,
\end{equation}
so with slight abuse of notation, we still write $x, y, X_2^0(x),$ and $X_2^0(y)$ for $\check x, \check y, \check X_2^0(\check x),$ and $\check X_2^0(\check y)$, respectively.

\iffalse
Now, we define
\begin{equation}
V(x, y) \defeq |x- y|_\infty^{-\alpha} \text{ on }\Delta(s_*)\,,
\end{equation}
where $s_*$, $\alpha > 0$ are small constants that will be chosen later.
Our choice of $\ell^\infty$ norm is convenient because the flow lines of $u(x, t)$ are axis-aligned, but it is not essential. For brevity, we will write $|\cdot|$ to mean $|\cdot|_\infty$ in this proof.
\fi
We aim to show that there is a constant $0 < \beta < 1$ such that, for any $(x, y) \in \Delta(s_*)$, we have
\begin{equation}
  \label{eq:lyapunov-inequality-pierrehumbert}
  \E[V(\Phi_2(x), \Phi_2(y))] \leq \beta V(x, y),
\end{equation}
where $\Phi_t$ denotes the flow map induced by $u$ at time $t$.

Fix $(x, y) \in \Delta(s_*)$. First, note that we have $|x-y|_\infty \leq CA^2|\Phi_2(x)-\Phi_2(y)|_\infty$ for some large $A$ and therefore $V(\Phi_2(x), \Phi_2(y)) \leq {(CA^2)}^p V(x, y)$ almost surely. We break into two cases.
\restartcases
\case[$|x_2 - y_2| \geq |x_1 - y_1|$]
Let $E_0$ be the event
\begin{equation}
  E_0 = \set{ \abs{\Phi_1(x)_1 - \Phi_1(y)_1} < 2|x_2-y_2| }
  \,.
\end{equation}
Since $\abs{\Phi_2(x) - \Phi_2(y)}_\infty \ge \abs{\Phi_1(x)_1-\Phi_1(y)_1}_\infty$, we have
  \begin{align}
    \E[V(\Phi_2(x), \Phi_2(y))] &\leq \E[\abs{\Phi_1(x)_1-\Phi_1(y)_1}^{-p}]\\
                          &\leq \E[\abs{\Phi_1(x)_1-\Phi_1(y)_1}^{-p}\one_{E_0}] + \E[\abs{\Phi_1(x)_1-\Phi_1(y)_1}^{-p} \one_{E_0^c}]\\
    \label{e:EVphi2xPhi2y}
    &\leq {(CA^2)}^p V(x, y) P(E_0) + 2^{-p}V(x, y)\,.
  \end{align}

  We will now estimate $P(E_0)$.
  %To see that $\P[E_0]\leq CA^{-1}$, we
  For this, we use the explicit form of the vector field $u$ to write
  \begin{equation*}
    \Phi_1(x)_1 - \Phi_1(y)_1 = x_1 - y_1 + A\sin(2\pi(x_2 - \zeta_0)) - A\sin(2\pi(y_2 - \zeta_0)).
  \end{equation*}
  Thus $E_0$ is contained in the event that
  \begin{equation*}
    |x_1-y_1| + |A\sin(2\pi(x_2 - \zeta_0)) - A\sin(2\pi(y_2 - \zeta_0))| < 2|x_2-y_2|.
  \end{equation*}
  In view of the assumption $\abs{x_2 - y_2} \geq \abs{x_1 - y_1}$, the above inequality is implied by
  \begin{equation}\label{eq:case1-sin-inequality}
    |A\sin(2\pi(x_2 - \zeta_0)) - A\sin(2\pi(y_2 - \zeta_0))| < |x_2-y_2|.
  \end{equation}
  Using the fundamental theorem of calculus, the left-hand side above can be written as the convolution $\abs{\left(2\pi A\cos(2\pi\cdot) \ast \mathbf{1}_{[x_2, y_2]}\right)(\zeta_0)}$.
  Here $[x_2, y_2]$ denotes the smallest interval (mod $\Z$) with $x_2$ and $y_2$ as endpoints.
  Since the derivative of $\cos(2\pi \cdot)$ is bounded away from zero near the zeros of $\cos(2\pi \cdot)$, the same is true for the convolution (rescaling by $|x_2-y_2|$).
  This implies that the set of $\zeta_0$ which satisfy~\eqref{eq:case1-sin-inequality} has measure at most $CA^{-1}$.
  This in turn implies~$\P(E_0) \leq C / A$.

  Using this in~\eqref{e:EVphi2xPhi2y} implies
  \begin{equation}
    \E[V(\Phi_2(x), \Phi_2(y))]
      \leq C A^{2p - 1} V(x, y)  + 2^{-p}V(x, y)\,.
  \end{equation}
  Choosing $p \in (0, \frac12)$ and $A > 0$ sufficiently large, we  can ensure
  \begin{equation}
    \E[V(\Phi_2(x), \Phi_2(y))]
      \leq 2^{-p / 2} V(x, y)
      \,,
  \end{equation}
  as desired.

\case[$|x_2 - y_2| < |x_1 - y_1|$]
Let $E_1$ be the event that $\abs{\Phi_1(x)_1-\Phi_1(y)_1} \leq A^{-1/2}\abs{x_1-y_1}$. Then $\P[E_1] \leq CA^{-1/2}$ by the same argument as in the previous case. On the other hand, if $E_2$ is the event that $\abs{\Phi_2(x)_2 - \Phi_2(y)_2} \leq 2A^{1/2}\abs{\Phi_1(x)_1-\Phi_1(y)_1}$, then we similarly have $\P[E_2] \leq CA^{-1/2}$. Putting these together, we conclude
  \begin{align*}
    \E[V(\Phi_2(x), \Phi_2(y))] &= \E[V(\Phi_2(x), \Phi_2(y)) \one_{E_1 \cup E_2}] + \E[V(\Phi_2(x), \Phi_2(y)) \one_{(E_1 \cup E_2)^c}]\\
    &\leq {(CA^2)}^{p}V(x, y)(CA^{-1/2}) + 2^{-p}V(x, y),
  \end{align*}
  so for $p \in (0, \frac14)$ we can choose $A > 0$ large enough to conclude as in the previous case.
\end{proof}

\iffalse
\appendix
\section{Enhanced dissipation for mixing flows.}

We could put the simplified PDE proof in \texttt{denh-sharp.tex} here?
It's not a new result; but a simpler proof and clearly shows why the PDE results are suboptimal.
(We could also skip it.)
\fi

\bibliographystyle{halpha-abbrv}
\bibliography{gautam-refs1,gautam-refs2,preprints}
\end{document}